\def\ds{\displaystyle}
\def\CE{\mathcal{E}}
\def\O{\Omega}
\def\g{\gamma}
\def\l{\lambda}
\def\E{K}
\renewcommand\sp{\mathop{\mathrm{Sp}}\nolimits}
\newtheorem{remark}{Remark}[section]
\newtheorem{lemma}{Lemma}[section]
\newtheorem{corollary}{Corollary}[section]
\newtheorem{theorem}{Theorem}[section]
\newtheorem{prop}{Proposition}[section]
\newcommand{\norm}[1]{\lVert#1\rVert}
\newcommand{\n}{\boldsymbol{n}}
\newcommand\bu{\boldsymbol{u}}
\newcommand\bv{\boldsymbol{v}}
\newcommand\bw{\boldsymbol{w}}
\newcommand\bx{\boldsymbol{x}}
\newcommand\by{\boldsymbol{y}}
\newcommand\bn{\boldsymbol{n}}
\newcommand\bt{\boldsymbol{t}}
\newcommand\bg{\boldsymbol{g}}
\def\bp{\mathbf{p}}
\def\hdel{\widehat{\delta}}
\def\CM{\mathcal{X}}
\def\CN{\mathcal{Y}}
\newcommand\bF{\boldsymbol{f}}
\newcommand\bT{\boldsymbol{T}}
\newcommand\0{\mathbf{0}}
\newcommand\bV{\boldsymbol{V}}
\newcommand\Ph{\mathcal{P}_{h}}
\newcommand\AT{\bold{A}}
\newcommand\BT{\bold{B}}
\newcommand\cT{\mathcal{T}}
\def\CT{{\mathcal T}}
\newcommand\bPi{\boldsymbol{\Pi}}
\newcommand\R{\mathbb{R}}
\renewcommand\H{\mathrm{H}}
\newcommand\Q{\mathrm{Q}}
\renewcommand\L{\mathrm{L}}
\renewcommand\O{\Omega}
\renewcommand\div{\mathop{\mathrm{div}}\nolimits}
\renewcommand\sp{\mathop{\mathrm{sp}}\nolimits}
\newcommand\LO{\L^2(\O)}
\newcommand\HsO{\H^s(\O)}
\newcommand\disp{\displaystyle}
\newcommand{\vertiii}[1]{{\left\vert\kern-0.25ex\left\vert\kern-0.25ex\left\vert #1 
    \right\vert\kern-0.25ex\right\vert\kern-0.25ex\right\vert}}
\begin{document}

\title[VEM for the Stokes eigenvalue problem]
{VEM approximation for the  Stokes eigenvalue problem: a priori and a posteriori error analysis}


\author{Dibyendu Adak}
\address{Department of Mechanical Engineering, Indian Institute of Technology Madras, Chennai-600036, India, and GIMNAP-Departamento de Matem\'atica, Universidad del B\'io-B\'io, Casilla 5-C, Concepci\'on, Chile.}
\email{dibyendu.jumath@gmail.com}
\thanks{The first author was partially supported by
ANID-Chile through FONDECYT  Postdoctorado project 3200242.}

\author{Felipe Lepe}
\address{GIMNAP-Departamento de Matem\'atica, Universidad del B\'io - B\'io, Casilla 5-C, Concepci\'on, Chile.}
\email{flepe@ubiobio.cl}
\thanks{The second author was partially supported by  DICREA through project 2120173 GI/C Universidad del B\'io-B\'io and 
ANID-Chile through FONDECYT project 11200529 (Chile).}
\author{Gonzalo Rivera}
\address{Departamento de Ciencias Exactas, Universidad de Los Lagos,
Casilla 933, Osorno, Chile.}
\email{gonzalo.rivera@ulagos.cl}  
\thanks{The third author was partially by Universidad de Los Lagos through project Regular R02/21.}

\subjclass[2000]{Primary 35Q35, 76D07, 35R06, 65N15, 65N50, 76M10}

\keywords{Stokes equations, eigenvalue problems, virtual element method,  error estimates, a posteriori error analysis}

\begin{abstract}
The present paper proposes an { \it inf-sup} stable {\it divergence free} virtual element method and associated {\it a priori}, and {\it a posteriori} error analysis to approximate the eigenvalues and eigenfunctions
of the Stokes spectral problem in one shot. For the a priori analysis, we take advantage of the compactness of the solution operator to prove convergence of the eigenfunctions and double order convergence of eigenvalues (cf. \cite{BO}).  Additionally we also propose an a posteriori estimator of residual type, which we prove is reliable and efficient, in order  to 
perform adaptive refinements that allow to recover the optimal order of convergence for non smooth eigenfunctions. A set of representative numerical examples investigates such theoretical results.
\end{abstract}

\maketitle

\section{Introduction}\label{sec:intro}
The Stokes eigenvalue problems are important topic of research since its wide application in fluid and solid mechanics problems.
 For a given domain $\O\subset\mathbb{R}^2$ with Lipschitz boundary, the classic  Stokes eigenvalue problem (\textbf{SEP}) reads as follows:
Find $\lambda\in\mathbb{R}$, the velocity $\bu$, and the pressure $p$ such that
\begin{equation}\label{def:stokes_eigen}
\left\{
\begin{array}{rcll}
-\nu\Delta\bu +\nabla p & = & \lambda\bu&  \text{ in } \quad \Omega, \\
\div\bu & = & 0 & \text{ in } \quad \Omega, \\
\bu & = & \mathbf{0} & \text{ on } \quad \partial\Omega,\\
\ds\int_{\O}p&=&0,
\end{array}
\right.
\end{equation} 
where $\nu>0$ is the kinematic viscosity.
The bibliography related to numerical methods to solve standard Stokes eigenvalue problem \eqref{def:stokes_eigen} is very rich, 
%
(see \cite{MR3864690,MR4077220,M1402959,LEPE2023114798, MR3335223}). All these references, and the references therein, show interesting alternatives in order to obtain accurate approximations for the eigenfunctions and eigenvalues of the \textbf{SEP}. 

The virtual element method (VEM) is a numerical technique to approximate solutions of partial differential equations arising in various fields of science and engineering. This new numerical method provides many fascinating features such as a solid mathematical background, discrete schemes which are independent of particular shape of the elements, including nonconvex and oddly shaped elements, a straightforward extension to higher dimension, arbitrary order of accuracy and regularity, and convenient mesh discretization for moving boundary domain and interface problems. In view of these salient characteristic of VEM, researchers pay attention to employ this technique for different model problems in last decade. We provide here a brief list of such contribution in the VEM literature, e.g., elliptic equation \cite{MR2997471,MR3460621}, Stokes problem \cite{MR3164557,manzini2022conforming,
MR3576570}, eigenvalue problems \cite{beirao2017virtual,MR3340705}, Navier-Stokes equation \cite{da2018virtual,beirao2019stokes,adak2021virtual}. 
Moreover, recently the virtual element method has been implemented in a pseudostress formulation of the Stokes spectral problem in \cite{LEPE2021113753} where we can found evidence of the  interesting features of the VEM method in tensorial formulations, where the velocity and pressure can be recovered via  post-processing the tensorial eigenfunction. However, such a formulation is not the natural path to solve the spectral problem and therefore, is relevant to develop numerical methods to obtain directly the velocity and pressure, without the need of a postprocess technique. With this aim, we continue with our research program related to the analysis of VEM to solve the Stokes spectral problem on divergence free conforming space developed in \cite{MR3164557}. However, a direct application of the discrete space \cite{MR3164557} is not adequate since the convergence of the velocity in $\L^2$ norm is suboptimal.
This fact on the source problem is carried also to the eigenvalue problem. Hence, an important improvement of the results of \cite{MR3164557} are contained in \cite{MR4332146}, where the authors, inspired by the works of \cite{MR3796371,MR3737081}, proposed for the Stokes problem a VEM space that avoids the suboptimal order of convergence for the $\L^2$ norm of the velocity, leading to an inf-sup stable VEM that approximates the solution optimally for both, the velocity and pressure. This space, originally developed for the stationary and non-stationary Stokes source problem can be perfectly adapted for the spectral problem in which we are interested. 

Additionally, we have noticed the promising features of VEM for adaptivity and aposteriori analysis as this technique allows great flexibility of meshing without difficulty of hanging nodes, and immediate combination of different shape of elements. In fact the VEM space space \cite{MR4332146} provides a control on eigenfunctions in $\L^2$ norm which is a key feature to derive a aposteriori analysis of \textbf{SEP} (cf.~\eqref{def:stokes_eigen}). Inspired by the work \cite{MR2473688}, the last but not the least aim of this paper is to introduce and analyse a residual-type a posteriori error estimator \cite{MR3719046} for a VEM approximation of the \textbf{SEP}. To the best of authors' knowledge, this article is the first work on a posteriori analysis of \textbf{SEP} under the VEM approach.
 Therefore, developing an estimator for these discrete spaces is necessary to handle with the great flexibility of the meshes admissible by the VEM. Their adaptability becomes an attractive feature since mesh refinement strategies can be implemented in a very efficient way.
Regarding the study of the VEM to spectral problems we can cite the following manuscripts \cite{arxiv.2207.12621,MR4050542,MR4497827}.

In this article, we have developed for the first time, a combined a priori and a posteriori analysis for the \textbf{SEP} on general type of domains. We have investigated the continuous weak formulation of \eqref{def:stokes_eigen} through certain compact, continuous and self adjoint operators \cite{BO}. Based of the application of compact operator, we have defined the discrete formulation avoiding nonpolynomial part. Further the divergence free space developed in \cite{MR3164557} is not suitable for the development of a posteriori analysis since we requires the control of higher order term, i.e., the error in $\L^2$ norm. Further, by employing the regularity of the eigenfunctions, we recover the double order of convergence of the eigenvalues. Eventually, we derive a suitable residual type estimator locally and globally and prove that measuring errors with estimator is equivalent to quantify errors in the norm of the space $\H^1(\Omega) \times \L^2(\Omega)$. Based on the previous observations, we summarize our contributions to the development of divergence free VEM for the \textbf{SEP} as follows:
\begin{itemize}
\item Divergence free conforming VEM schemes are proposed for the \textbf{SEP}, and a priori, and a posteriori error estimates are derived in a unified way. 
\item The framework of the convergence analysis is robust which means the convergence analysis is derived in the norm of the continuous space.
\item We have examined several benchmark problems to justify the theory, and observed the optimal order of convergence for eigenfunctions on smooth domain. At last, we consider $L$ shaped domain with geometric singularity to verify the promising features of our proposed estimator, and recover the optimal order of convergence even in presence of er-entrant corner in the computational domain.
\end{itemize}

The outline of this manuscript is as follows. In Section \ref{sec:model_problem} we present the \textbf{SEP} in the classic saddle point formulation. We introduce the bilinear forms, stability results, the continuous solution operator, and  regularity properties for the solution.
In Section \ref{SEC:DISCRETE} the virtual element method is defined, where we introduce the properties of the mesh, local and global virtual element spaces, together with their corresponding degrees of freedom. We summarize some technical results which are needed to introduce the discrete solution operator. Also, under the framework of the compact operators theory, we prove  the convergence of the method, the absence of spurious eigenvalues, and a priori error estimates for the eigenfunctions and eigenvalues.  In Section \ref{sec:a_post} we design an a posteriori error estimator for the \textbf{SEP}, of residual type. We prove that the proposed estimator is efficient and reliable. Finally in Section \ref{sec:numerics} we report a series of numerical tests in order to assess the performance of the numerical method under analysis. These tests will illustrate the approximation of the spectrum together with the performance of the a posteriori estimator.

\subsection{Notations}
Throughout this work, $\O$ is a generic Lipschitz bounded domain of $\R^2$. For $s\geq 0$,
$\norm{\cdot}_{s,\O}$ stands indistinctly for the norm of the Hilbertian
Sobolev spaces $\HsO$ or $[\HsO]^2$ with the convention
$\H^0(\O):=\LO$.  If ${X}$ and ${Y}$ are normed vector spaces, we write ${X} \hookrightarrow {Y}$ to denote that ${X}$ is continuously embedded in ${Y}$. We denote by ${X}'$ and $\|\cdot\|_{{X}}$ the dual and the norm of ${X}$, respectively.
Finally,
we employ $\0$ to denote a generic null vector and
the relation $\texttt{a} \lesssim \texttt{b}$ indicates that $\texttt{a} \leq C \texttt{b}$, with a positive constant $C$ which is independent of $\texttt{a}$, $\texttt{b}$, and the size of the elements in the mesh. The value of $C$ might change at each occurrence. We remark that we will write the constant $C$ only when is needed.

\section{The model problem}\label{sec:model_problem}

\subsection{Weak formulation}
Let us denote by $(\cdot,\cdot)$ the classic $\L^2$- inner product. The weak spectral problem associated to \eqref{def:stokes_eigen} reads as follows: Find $\lambda\in\mathbb{R}$ and $(\boldsymbol{0},0)\neq(\bu,p)\in [\H^1_0(\O)]^2\times \L^2_0(\O)$ such that
\begin{equation}\label{eq:weak_stokes_eigen}
\begin{array}{rcll}
 a(\bu,\bv)+b(\bv,p) & = &\lambda c(\bu,\bv) &\forall\bv\in [\H_0^1(\O)]^2,\\
\ds b(\bu,q) & = & 0 &\forall\ q\in \L^2_0(\O),
\end{array}
\end{equation}
where the bilinear forms  $a:[\H^1_0(\O)]^2\times[\H^1_0(\O)]^2\rightarrow\mathbb{R}$,  $b:[\H^1_0(\O)]^2\times\L_0^2(\O)\rightarrow\mathbb{R}$ and $c:[\H^1_0(\O)]^2\times[\H^1_0(\O)]^2\rightarrow\mathbb{R}$  are defined, respectively, by
\begin{equation}
\label{eq:bilinear_forms}
\ds a(\bw,\bv):=\nu \int_{\O}\nabla\bw:\nabla\bv,
\qquad 
b(\bv,q):=-\int_{\O}q\div\bv,
\qquad  c(\bw,\bv):=\int_{\O}\bw\cdot\bv.
\end{equation}
In order to simplify the presentation of the material, we define $\bV:=[\H_0^1(\O)]^2$ and $\Q:=\L^2_0(\O)$.
%

Is a standard result that  
$a(\cdot,\cdot)$ is elliptic in $\bV$. Also, there exists $\beta>0$ such that the following inf-sup condition holds (see for instance \cite[Theorem 4.3]{MR2050138})
\begin{equation*}
\label{eq:in-sup-cont}
\ds \inf_{q\in \Q}\sup_{v\in \bV}\frac{b(\bv,q)}{\|\bv\|_{1,\O}\|q\|_{0,\O}}\geq\beta.
\end{equation*}

With these results at hand, we are in position to introduce
the solution operator $\bT$, defined as follows:
\begin{align*}
\bT:\bV\rightarrow \bV,\quad
           \boldsymbol{f}\mapsto \bT\boldsymbol{f}:=\widehat{\bu}, 
\end{align*}
\begin{equation}\label{eq:weak_stokes_source}
\begin{array}{rcll}
 a(\widehat{\bu},\bv)+b(\bv,\widehat{p}) & = &c(\boldsymbol{f},\bv) &\forall\bv\in \bV,\\
\ds b(\widehat{\bu},q) & = & 0 &\forall\ q\in \Q.
\end{array}
\end{equation}
It follows that $\bT$ is well defined due Babu\^ska-Brezzi. We observe that $(\mu,\bu)$ is an eigenpair of $\bT$ if only if there exists $(\bu,p)\in\bV\times \Q$ such that $(\lambda, (\bu,p))\in\mathbb{R}\times \bV\times \Q$ solves \eqref{eq:weak_stokes_eigen}, i.e.
\begin{equation*}
\ds \bT\bu=\mu\bu\quad\text{with}\,\,\mu:=\frac{1}{\lambda}\qquad \text{and}\,\,\l\neq0.
\end{equation*}
Moreover, it is easy to check that $\bT$ is selfadjoint with respect to the inner product in $[\L^2(\O)]^2$.

We now present an additional regularity result for problem  \eqref{eq:weak_stokes_source} and consequently for the  eigenfunctions of $\bT$, there exists  $s>0$, depending on $\O$, such that $\widehat{\bu}\in[\H^{1+s}(\O)]^2$ and $\widehat{p}\in\H^s(\O)$ and  the following stability result holds
\begin{equation}
\label{eq:data_dependence_cont}
\|\widehat{\bu}\|_{1+s,\O}+\|\widehat{p}\|_{s,\O}\lesssim \|\bF\|_{0,\O}.
\end{equation}

On the other hand, invoking the classic regularity results of \cite{MR1655512,MR606505},  there exists $r>0$,  depending on the domain, such that the eigenfunctions
$\bu\in[\H^{1+r}(\O)]^2$ and $p\in \H^r(\O)$ and satisfies
\begin{equation*}
\|\bu\|_{1+r,\O}+\|p\|_r\lesssim \|\bu\|_{0,\O}.
\end{equation*}
Therefore, due to the compact inclusion of the $[\H^{1+s}(\O)]^2\hookrightarrow [\H^1(\O)]^2$, $\bT$ is a compact operator.

Finally, we have the spectral characterization of the solution operator $\bT$.
\begin{theorem}[Spectral characterization of $\bT$]
\label{thrm:spec_char_T}
The spectrum of $\bT$ satisfies $\sp(\bT)=\{0\}\cup\{\mu_k\}_{k\in\mathbb{N}}$, where $\{\mu_k\}_{k\in\mathbb{N}}$
is a sequence of real positive eigenvalues which converges to zero, repeated according their respective multiplicities. 
\end{theorem}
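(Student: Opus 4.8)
The plan is to observe that, by the facts already assembled above, $\bT$ is a compact selfadjoint operator on a Hilbert space, and then to combine the classical spectral theorem with an energy identity that forces the nonzero eigenvalues to be positive. First I would record the three ingredients in hand: Babu\v{s}ka--Brezzi theory makes $\bT$ a bounded linear operator on $\bV$; the regularity estimate \eqref{eq:data_dependence_cont} together with the compact embedding $[\H^{1+s}(\O)]^2\hookrightarrow[\H^1(\O)]^2$ makes $\bT$ compact; and $\bT$ is selfadjoint with respect to the $[\L^2(\O)]^2$ inner product $c(\cdot,\cdot)$. The Riesz--Schauder theory for compact selfadjoint operators (cf.~\cite{BO}) then yields directly that $\sp(\bT)$ is real, consists of at most countably many eigenvalues of finite multiplicity whose only possible accumulation point is $0$, and that $0\in\sp(\bT)$ because $\bV$ is infinite dimensional. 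Listing these eigenvalues with multiplicity produces the claimed sequence $\{\mu_k\}_{k\in\mathbb{N}}$ with $\mu_k\to 0$.

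The remaining point, strict positivity, I would obtain from an energy identity for the source problem. Given $\boldsymbol{f}\in\bV$ with image $\widehat{\bu}=\bT\boldsymbol{f}$, testing the first equation of \eqref{eq:weak_stokes_source} with $\bv=\widehat{\bu}$ and using $b(\widehat{\bu},\widehat{p})=0$ (which is the second equation, since $\widehat{p}\in\Q$) gives
\[
c(\bT\boldsymbol{f},\boldsymbol{f})=c(\boldsymbol{f},\widehat{\bu})=a(\widehat{\bu},\widehat{\bu})\ge 0,
\]
so $\bT$ is positive semidefinite for $c(\cdot,\cdot)$. If now $\bT\bu=\mu\bu$ with $\bu\neq\boldsymbol{0}$, applying this identity with $\boldsymbol{f}=\bu$ and $\widehat{\bu}=\mu\bu$ yields $\mu\, c(\bu,\bu)=\mu^2\, a(\bu,\bu)$, hence $\|\bu\|_{0,\O}^2=\mu\, a(\bu,\bu)$; since the ellipticity of $a$ forces $a(\bu,\bu)>0$ for $\bu\neq\boldsymbol{0}$, we conclude $\mu>0$.

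I expect the only delicate bookkeeping to concern the Hilbert-space structure used for the spectral theorem: selfadjointness holds for the $[\L^2(\O)]^2$ pairing $c(\cdot,\cdot)$ rather than for the natural inner product of $\bV$, so the spectral theorem must be applied with this pairing in mind, and one should check that the eigenfunctions genuinely belong to $\bV$ (they do, being images under $\bT$). Beyond this, the argument reduces to citing the standard theory of compact selfadjoint operators, so I anticipate no substantive obstacle.
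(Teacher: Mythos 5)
The paper states this theorem without proof, presenting it as an immediate consequence of the compactness and $[\L^2(\O)]^2$-selfadjointness of $\bT$ established in the preceding paragraphs; your proposal fills in exactly that standard argument (Riesz--Schauder spectral theory for the discrete structure of the spectrum, plus the energy identity $c(\boldsymbol{f},\bT\boldsymbol{f})=a(\widehat{\bu},\widehat{\bu})$ for positivity) and is correct. The one caveat, which you yourself flag, is that $(\bV,c(\cdot,\cdot))$ is not complete, so the Hilbert-space spectral theorem does not apply verbatim there; the clean fix is to get the countability, finite multiplicities, and accumulation only at $0$ from Riesz--Schauder theory for compact operators on the Banach space $\bV$ (no selfadjointness needed), with realness and positivity then following from your purely algebraic identities, or equivalently to work with the operator $\widetilde{\bT}$ on $[\L^2(\O)]^2$ that the paper introduces in Section 3, whose spectrum coincides with that of $\bT$.
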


Let us introduce the bilinear form $A: (\bV\times \Q)\times (\bV\times \Q)\rightarrow\mathbb{R}$  defined by
\begin{equation}
\label{eq:formaA}
A((\bu,p),(\bv,q)):= a(\bu,\bv)+b(\bv,p)+ b(\bu,q),\quad (\bu,p), (\bv,q)\in \bV\times \Q,
\end{equation}
such that problem \eqref{eq:weak_stokes_eigen} is rewritten in the following form: Find $\lambda\in\mathbb{R}$ and $(\bu,p)\in\bV\times\Q$ such that 
\begin{equation}\label{eq:eigen_A}
A((\bu,p),(\bv,q))=\lambda c(\bu,\bv)\quad\forall (\bv,q)\in\bV\times\Q.
\end{equation}
Then, from \cite{MR1463151} we have that $A(\cdot,\cdot)$ is stable in the sense that given $(\bv,q)\in\bV\times \Q $, there exists $(\bw,s)\in\bV\times \Q$ such that
$\|\bv\|_{1,\O}+\|q\|_{0,\O}\leq A((\bv,q),(\bw,s))$ and $\|\bw\|_{1,\O}+\|s\|_{0,\O}\leq C$. Also, for every $(\bv,q), (\bw,s)\in\bV\times \Q$ there holds
\begin{equation*}A((\bv,q), (\bw,s))\lesssim \|(\bv,q)\|_{ \bV\times \Q}\|(\bw,s)\|_{ \bV\times \Q}.\end{equation*}

\setcounter{equation}{0}
\section{The virtual element method}
\label{SEC:DISCRETE}

The following section is dedicated to the analysis of the virtual element approximation for the eigenproblem
presented in Problem~\ref{eq:weak_stokes_eigen}. With this aim in mind, we introduce the following definitions.
Let $\left\{\CT_h\right\}_h$ be a sequence of decompositions of $\O$
into polygons $\E$. Besides, we will denote by $\ell$ a generic edge of $\partial K$ and by $h_{\ell}$ its length. The set of all the edges in $\CT_h$ will be denote by $\mathcal{E}_h$. Let $h_\E$ denote the diameter of the element $\E$
and $h$ the maximum of the diameters of all the elements of the mesh,
i.e., $h:=\max_{\E\in\O}h_\E$. For $\CT_h$ we will consider
the following assumptions:
\begin{itemize}
\item \textbf{A1.} There exists $\g>0$ such that, for all meshes
$\CT_h$, each polygon $\E\in\CT_h$ is star-shaped with respect to a ball
of radius greater than or equal to $\g h_{\E}$.
\item \textbf{A2.} The distance between any two vertexes of $\E$ is $\geq Ch_\E$, where $C$ is a positive constant.
\end{itemize}

The bilinear forms $a(\cdot,\cdot)$, $b(\cdot,\cdot)$, $c(\cdot,\cdot)$ can be decomposed into local contributions as follows
\begin{align*}
a(\bw,\bv)&:=\sum_{\E\in\CT_{h}}a^{\E}(\bu,\bv) \qquad \text{for all }\bw, \bv\in \bV,\\
b(\bv,q)&:=\sum_{\E\in\CT_{h}}b^{\E}(\bv,q) \qquad \text{for all }\bv\in \bV \text{and }q\in \Q,\\
c(\bw,\bv)&:=\sum_{\E\in\CT_{h}}c^{\E}(\bw,\bv) \qquad \text{for all }\bw,\bv\in \bV.
\end{align*}
In the same way, we split  elementwise the norms of $[\H^1(\O)]^2$ and $\L^2(\O)$ by 
\begin{equation*}
\label{eq:normas}
\ds \|\bv\|_{1,\O}:=\left(\sum_{K\in\mathcal{T}_h}\|\bv\|_{1,K}^2\right)^{1/2}\quad\forall\bv\in \bV, \quad \|q\|_{0,\O}:=\left(\sum_{K\in\mathcal{T}_h}\|q\|_{0,K}^2\right)^{1/2}\quad\forall q\in \Q.
\end{equation*}
Further, we introduce the following function space of piecewise $\H^1$ functions
\begin{equation*}
\label{disH1Fn}
\H^1(\Omega,\CT_h):= \{ \bv \in [\L^2(\Omega)]^2~: \bv|_K \in [\H^1(K)]^2 \},
\end{equation*} associated with the semi-norm
$$|\bv|_{1,h}:=\left( \sum_{\E \in \CT_h} |\bv|_{1,\E}^2 \right)^{1/2}.$$

Inspired by  \cite{MR4332146,MR4250631}, we will construct the virtual spaces to study the velocity-pressure formulation of \eqref{eq:weak_stokes_eigen}. 
First, we introduce the velocity virtual space $\bV_{h}$: 
let  $\E$ be a simple polygon. We define 
\begin{equation*}
\label{espace-1}
\boldsymbol{\mathbb{B}}_{\partial \E}:=\{\bv_{h}\in [C^0(\partial \E)]^{2}: \bv_{h}\cdot \bn|_{\ell}\in 
\mathbb{P}_2(\ell)\text{ and }\bv_{h}\cdot \bt|_{\ell}\in 
\mathbb{P}_1(\ell)\ \ \forall \ell\subset
\partial \E\}.
\end{equation*}
With this space at hand, we consider the
following local finite dimensional 
space:
\begin{multline*}
 \label{space2}
\widehat{\bV}_h^\E:=\left\{\bv_{h}\in [\H^1(\E)]^2: \Delta \bv_{h}-\nabla s\in \nabla \mathbb{P}_2(\E)^{\perp} \text{ for some } s\in \L_0^2(E), \right.\\
\left.\div \bv_h\in \mathbb{P}_0(\E)\,\, 
\textrm{and}\,\,\bv_{h}|_{\partial \E}\in \boldsymbol{\mathbb{B}}_{\partial \E} \right\}.
 \end{multline*}
The following set of linear operators are well defined for all $\bv_h\in\widehat{\bV}_h^\E$:
 \begin{itemize}
  \item $\mathcal{V}_\E^h$ : The (vector) values of $\bv_h$ at the vertices.
  \item $\mathcal{L}_\E^h$: the value of
  \begin{equation*}
  \frac{1}{|\ell|}\int_{\ell}\bv_h\cdot\bn \quad \forall \text{ edges }\ell\in\partial\E.
  \end{equation*}.
  \item $\mathcal{\E}_\E^h$: the value of
  \begin{equation*}
  \int_\E\bv_h\cdot g^{\perp} \quad \forall \bg^{\perp}\in \nabla \mathbb{P}_2(\E)^{\perp}.
  \end{equation*}
\end{itemize}
Let us remark that the set of linear operators $\mathcal{V}_\E^h$,   $\mathcal{L}_\E^h$ and $\mathcal{\E}_\E^h$ constitutes a set of degrees of
freedom for the local virtual space $\widehat{\bV}_h^\E$ (see \cite{MR4332146}). Moreover, it is easy to check that
$[\mathbb{P}_1(\E)]^2\subset \widehat{\bV}_h^\E$. This will guarantee the good approximation properties for the space. Now we define the projector
$\boldsymbol{\Pi}^{\E}: \widehat{\bV}_h^\E\longrightarrow [\mathbb{P 
}_1(\E)]^2\subset\widehat{\bV}_h^\E$
for each $\bv_h\in\widehat{\bV}_h^\E$ as the solution of
\begin{align*}
\label{proje_0}
\left\{\begin{array}{ll}
  a^{\E}(\bp,\boldsymbol{\Pi}^{\E}\bv_h) &= a^{\E}(\bp,\bv_h)
\quad \forall \bp\in [\mathbb{P}_1(\E)]^2,\\\\
\ds\vert\partial\E\vert^{-1}\int_{\partial \E}\boldsymbol{\Pi}^{\E}\bv_h
&=\ds\vert\partial\E\vert^{-1}\int_{\partial \E}\bv_h,
\end{array}\right.
\end{align*}
We observe that, the operator $\boldsymbol{\Pi}^{\E}$ is well defined on $\widehat{\bV}_h^\E$ and computable. Now we define the local virtual element spaces $\bV_h^\E$ as follows,
\begin{equation*}
 \label{space2}
\bV_h^\E:=\left\{\bv_{h}\in \widehat{\bV}_h^\E:  \disp\int_\E \boldsymbol{\Pi}^{\E}\bv_h\cdot \bg^{\perp}=\int_\E \bv_h\cdot \bg^{\perp} \text{ for } \bg^{\perp}\in \nabla \mathbb{P}_2(\E)^{\perp}\right\}.
 \end{equation*}
Additionally, we have that the  standard $[{\mathrm L}^2(\E)]^2$-projector operator
$\boldsymbol{\Pi}_{0}^{\E}: \bV_h^{\E}\to[\mathbb{P}_1(\E)]^2$  can be  computed. In fact, for all $\bv_h\in  \bV_h^\E$, the function $\boldsymbol{\Pi}_{0}^{\E}\bv_h\in [\mathbb{P}_1(\E)]^2$ is defined by:
\begin{align*}
\label{pi0}
\int_\E\boldsymbol{\Pi}_{0}^{\E}\bv_h\cdot \boldsymbol{p}&=\int_\E\bv_h\cdot \left(\nabla p_2+\nabla p_2^{\perp}\right)\\
&=-\int_\E\div(\bv_h)p_2+\int_{\partial \E}(\bv_h\cdot\n)p_2+
 \disp\int_\E \boldsymbol{\Pi}^{\E}\bv_h\cdot\nabla p_2^{\perp}, \quad\forall \boldsymbol{p}\in[\mathbb{P}_1(\E)]^2.
\end{align*}
\begin{remark}
In the case when we need to denote the aforementioned projectors as global, we will drop the superindex $\E$ when is necessary.
\end{remark}
\noindent We can now present the global virtual space:
for every decomposition $\CT_h$ of $\O$ into
simple polygons $\E$.
\begin{align*}
\bV_h:=\left\{\bv_{h}\in\bV:\bv_h|_{\E}\in  \bV_h^{\E},\quad \forall \E\in \CT_h \right\}.
\end{align*}
In agreement with the local choice of the degrees of freedom, in 
$\bV_h$ we choose the following degrees of freedom:
\begin{itemize}
\item $\mathcal{V}^h$: the (vector) values of $\bv_h$ at the vertices of $\CT_h$.
\item $\mathcal{L}^h$: the value of
  \begin{equation*}
  \frac{1}{|\ell|} \int_\ell\bv_h\cdot\bn \quad \forall \ell\in\CT_h.
  \end{equation*}
\end{itemize}
The pressure space is given by
\begin{equation*}
\Q_h:=\{q_h \in \Q\,:\, q_h|_K\in\mathbb{P}_0(K),\,\quad\forall K\in \CT_h\},
\end{equation*}
where the degrees of freedom are one per element, given by the value of the function on the element. 


\subsection{The discrete bilinear forms}
\label{subsec:disc_b_f}
Now we will introduce the discrete version of the bilinear forms $a(\cdot, \cdot)$, $b(\cdot,\cdot)$ and $c(\cdot, \cdot)$  on $\bV_{h}^{\E}\times\bV_{h}^{\E}$ (see \cite{MR3572918}), which are symmetric and semi-positive
definite as follows. 
\begin{equation}
\label{ec:ahch}
a_h(\bw_h,\bv_h)
:=\sum_{\E\in\CT_h}a_h^{\E}(\bw_h,\bv_h),
\quad c_h(\bw_h,\bv_h)
:=\sum_{\E\in\CT_h}c_h^{\E}(\bw_h,\bv_h)\quad \bw_h,\bv_h\in\bV_{h},
\end{equation}
where $a_h^{\E}(\cdot,\cdot)$ is the bilinear form defined on
$\bV_{h}^{\E}\times\bV_{h}^{\E}$ by
\begin{equation*}
\label{21}
a_h^{\E}(\bw_{h},\bv_{h})
:=a^{\E}\big(\boldsymbol{\Pi}^{\E} \bw_h,\boldsymbol{\Pi}^{\E} \bv_h\big)
+S^{\E}\big(\bw_h-\bPi^{\E} \bw_h,\bv_h-\bPi^{\E} \bv_h\big)
\quad \bw_h,\bv_h\in\bV_{h}^{\E},
\end{equation*}
where $S^{\E}(\cdot,\cdot)$ denotes any symmetric positive definite bilinear form that satisfies
$$c_0 a^{\E}(\bw_h,\bw_h)\leq S^{\E}(\bw_h,\bw_h)\leq c_1a^{\E}(\bw_h,\bw_h),\qquad \forall \bw_h\in\bV_{h}^{\E} \cap \text{ker}(\bPi^{\E}) .$$ Further, $c_0$ and $c_1$ are positive constants depending on the mesh assumptions.
Now, we define  $c_h^{\E}(\cdot,\cdot)$ as the bilinear form defined on
$\bV_{h}^{\E}\times\bV_{h}^{\E}$ by
\begin{equation*}
\label{eq_ch}
c_h^{\E}(\bw_h,\bv_h)
:=c^{\E}\big(\bPi_{0}^{\E} \bw_h,\bPi_{0}^{\E} \bv_h\big)
\qquad \bw_h,\bv_h\in\bV_{h}^{\E}. 
\end{equation*}
%
%
%
From the definition of $c_h^\E(\cdot,\cdot)$ and the approximation properties of $\bPi_0^\E$, it is not difficult to prove that:
\begin{equation*}
c_h^\E(\boldsymbol{q}_h,\bv_h)=c^\E(\boldsymbol{q}_h,\bv_h),\qquad \boldsymbol{q}_h\in [\mathbb{P}_1(\E)]^2,
\end{equation*}
and
\begin{equation*}
\|\bv-\bPi_0^\E\bv\|_{0,\E}=\inf_{\boldsymbol{q}_h\in [\mathbb{P}_1(\E)]^2}\|\bv-\boldsymbol{q}\|_{0,\E}.
\end{equation*}
We remark that for all $\E\in\CT_h$, the local bilinear form $b^{\E}(\cdot,\cdot)$ is computable from the degrees of freedom. Since for  any function $\bv_h\in \bV_h^K$  and $q_{h}\in\Q_{h}$ we have
\begin{equation*}
b^{\E}(\bv_h, q_h)=\int_\E\div\bv_hq_h=\sum_{\ell\subset\partial \E}\int_\ell(q_{h}\bn)\cdot\bv_h.
\end{equation*}
 Moreover, $a_h^K(\cdot,\cdot)$ must admit the following properties:
\begin{itemize}
\item Consistency: For all $\boldsymbol{q}_h\in[\mathbb{P}_1(\E)]^2$ and $\bv_h\in\mathbf{V}_h^\E$
\begin{equation*}
a_h^\E(\boldsymbol{q}_h,\bv_h)=a^\E(\boldsymbol{q}_h,\bv_h),
\end{equation*}
\item Stability: There exists two positive constants $\alpha_*$ and $\alpha^*$, independent of $h$ and $\E$ such that
\begin{equation*}
\alpha_*a^\E(\bv_h,\bv_h)\leq a_h^\E(\bv_h,\bv_h)\leq\alpha^*a^\E(\bv_h,\bv_h)\quad\forall\bv_h\in\mathbf{V}_h^\E.
\end{equation*}

\end{itemize}

\begin{remark}
\label{stab:mass}
Regarding the work \cite{MR3867390}, we note that it is also possible to define the discrete bilinear form $c_h(\cdot,\cdot)$ in such a way that it remains stable, just as the bilinear form $a_h(\cdot,\cdot)$, in the following way
\begin{equation*}
c_h^{\E}(\bw_{h},\bv_{h})
:=c^{\E}\big(\boldsymbol{\Pi}_0^{\E} \bw_h,\boldsymbol{\Pi}^{\E}_0 \bv_h\big)
+S_0^{\E}\big(\bw_h-\bPi_0^{\E} \bw_h,\bv_h-\bPi_0^{\E} \bv_h\big),
\end{equation*} 
where $S_0^{\E}(\cdot,\cdot)$ denotes any symmetric positive definite bilinear form, such that there exist
two uniform positive constants $\widehat{c}_0$ and $\widehat{c}_1$ satisfying 
$$\widehat{c}_0 c^{\E}(\bw_h,\bw_h)\leq S_0^{\E}(\bw_h,\bw_h)\leq \widehat{c}_1 c^{\E}(\bw_h,\bw_h),\qquad \forall \bw_h\in\bV_{h}^{\E}\cap \ker(\bPi^{\E}_0).$$
\end{remark}
\noindent Eventually, the discrete version of \eqref{eq:weak_stokes_eigen} reads as follows: Find $\lambda_h\in\mathbb{R}$ and $(\boldsymbol{0},0)\neq (\bu_h,p_h)\in \bV_h\times \Q_h$ such that
\begin{equation}\label{eq:weak_stokes_eigen_disc}
\begin{array}{rcll}
 a_h(\bu_h,\bv_h)+b(\bv_h,p_h) & = &\lambda_h c_{h}(\boldsymbol{u}_h,\bv_h) &\forall\bv_h\in \mathbf{V}_h,\\
\ds b(\bu_h,q_h) & = & 0 &\forall\ q_h\in \Q_h.
\end{array}
\end{equation}
For the convergence analysis of the spectrum, introduce the discrete version of the operator $\bT$. To do this task, we summarize the following well known results: an inf-sup condition \cite[Lemma 4.3]{MR4332146} and a coercivity result \cite[Section 3.2]{MR4332146}. 

\begin{lemma}[discrete inf-sup]
\label{lmm:disc_inf_sup}
There exists a positive constant $\beta$, independent of $h$, such that the following inf-sup condition holds 
\begin{equation*}
\ds\sup_{\bv_h\in \bV_h, \bv_h\neq\boldsymbol{0}}\frac{b(\bv_h,q_h)}{\|\bv_h\|_{1,\O}}\geq\beta\|q_h\|_{0,\O}\qquad\forall q_h\in \Q_h.
\end{equation*}
\end{lemma}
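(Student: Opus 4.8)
The plan is to deduce the discrete inf-sup condition from the continuous one by exhibiting a Fortin operator. Recall that, by Fortin's lemma, it suffices to construct a linear operator $\Pi_F:\bV\to\bV_h$ that is bounded uniformly in $h$, i.e. $\|\Pi_F\bv\|_{1,\O}\lesssim\|\bv\|_{1,\O}$ for all $\bv\in\bV$, and that satisfies the moment-preservation property
\[
b(\bv-\Pi_F\bv,q_h)=0\qquad\forall\,q_h\in\Q_h,\ \forall\,\bv\in\bV.
\]
Indeed, given $q_h\in\Q_h$, the continuous inf-sup condition furnishes $\bv\in\bV$ with $b(\bv,q_h)\gtrsim\|\bv\|_{1,\O}\|q_h\|_{0,\O}$; taking $\bv_h:=\Pi_F\bv\in\bV_h$ and using the two properties of $\Pi_F$ yields $b(\bv_h,q_h)=b(\bv,q_h)$ together with $\|\bv_h\|_{1,\O}\lesssim\|\bv\|_{1,\O}$, whence the discrete inf-sup constant is bounded below by the continuous constant $\beta$ divided by the operator norm of $\Pi_F$.

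The decisive simplification here is that $\Q_h$ consists of piecewise constants, so for $q_h\in\Q_h$, integration by parts on each element gives
\[
b(\bv,q_h)=-\sum_{\E\in\CT_h}q_h|_\E\int_\E\div\bv=-\sum_{\E\in\CT_h}q_h|_\E\int_{\partial\E}\bv\cdot\bn .
\]
Hence $b(\bv,q_h)$ depends on $\bv$ only through its edge normal fluxes $\int_\ell\bv\cdot\bn$, and the moment-preservation property reduces to requiring that $\Pi_F$ preserve the edge normal moments $\int_\ell(\Pi_F\bv)\cdot\bn=\int_\ell\bv\cdot\bn$ for every edge $\ell\in\mathcal{E}_h$; this is precisely the degree of freedom $\mathcal{L}^h$ of $\bV_h$. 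Since $\div\bV_h^\E\subset\mathbb{P}_0(\E)$, preserving these moments forces $\div(\Pi_F\bv)$ to equal the $\L^2$-projection of $\div\bv$ onto $\Q_h$ on each element, so that $b(\bv-\Pi_F\bv,q_h)=0$ holds by construction.

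I would then build $\Pi_F\bv$ by prescribing its degrees of freedom in $\bV_h$: the edge moments $\mathcal{L}^h$ are set equal to those of $\bv$ as above. The vertex values $\mathcal{V}^h$ cannot be read off directly from $\bv$, because functions in $\bV=[\H^1_0(\O)]^2$ do not admit well-defined point values in two dimensions; to circumvent this I would first replace $\bv$ by a Cl\'ement/Scott--Zhang-type regularization $\bv^{\ast}$ that is $\H^1$-stable and has meaningful vertex values, use $\bv^{\ast}$ to fix the vertex degrees of freedom, and then correct the edge normal moments back to those of $\bv$ by adding local virtual functions supported near each edge. Under the mesh regularity hypotheses \textbf{A1}--\textbf{A2}, a scaling and trace argument on each element (invoking star-shapedness and the vertex-separation bound) controls these corrections and yields the uniform $\H^1$-bound.

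The main obstacle is precisely this uniform stability estimate $\|\Pi_F\bv\|_{1,\O}\lesssim\|\bv\|_{1,\O}$: one must bound the local virtual interpolant and the edge-flux corrections by the $\H^1$-norm of $\bv$ over a suitable element patch, using the VEM interpolation and norm-equivalence estimates, with a constant independent of $h$ and of the number of edges per element. Once such a $\Pi_F$ is in hand, Fortin's lemma combined with the continuous inf-sup constant $\beta$ concludes the proof.
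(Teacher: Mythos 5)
The paper offers no proof of Lemma~\ref{lmm:disc_inf_sup} at all: it quotes it as a known result from \cite[Lemma 4.3]{MR4332146}, and the proof given there is precisely the Fortin-operator argument you describe --- vertex degrees of freedom assigned through a Cl\'ement-type regularization, edge normal moments matched to those of $\bv$ (which, since $\Q_h$ is piecewise constant and $\div\bV_h^\E\subset\mathbb{P}_0(\E)$, gives $b(\bv-\Pi_F\bv,q_h)=0$), and uniform $\H^1$-stability obtained by scaling under assumptions \textbf{A1}--\textbf{A2}. Your proposal is therefore correct and follows essentially the same route as the paper's cited source; the uniform stability of the virtual edge-moment corrections, which you rightly flag as the main obstacle, is the only real technical content and is handled in that reference via the VEM norm equivalence on star-shaped polygons.
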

\begin{lemma}[discrete coercivity]
\label{lm:ah_elliptic}
For $\bv_h\in\bV_h$, there exists $\tilde{\alpha}>0$, independent of $h$ such that 
\begin{equation*}
a_h(\bv_h,\bv_h)\geq \tilde{\alpha} \|\bv_h\|_{1,\O}^2\quad\forall\,\bv_h\in \bV_h.
\end{equation*}
\end{lemma}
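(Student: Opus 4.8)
The plan is to obtain the global coercivity by summing the local stability estimates for $a_h^{\E}(\cdot,\cdot)$ and then invoking the ellipticity of the continuous form $a(\cdot,\cdot)$ on $\bV$. The argument is a standard VEM assembly, so I will not expect any serious technical difficulty; the work lies in chaining together properties already established in the excerpt.

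First I would use the decomposition of the discrete form into local contributions, namely $a_h(\bv_h,\bv_h)=\sum_{\E\in\CT_h}a_h^{\E}(\bv_h,\bv_h)$ from \eqref{ec:ahch}. For each $\E\in\CT_h$ and each $\bv_h\in\bV_h^{\E}$, the Stability property of $a_h^{\E}(\cdot,\cdot)$ gives the lower bound $a_h^{\E}(\bv_h,\bv_h)\ge \alpha_* a^{\E}(\bv_h,\bv_h)$, where $\alpha_*$ is independent of both $h$ and $\E$ (this uniformity is exactly what the mesh assumptions \textbf{A1} and \textbf{A2} are there to guarantee). Summing over all elements and recognizing that $\sum_{\E}a^{\E}(\bv_h,\bv_h)=a(\bv_h,\bv_h)$ by the elementwise decomposition of the continuous form, I obtain $a_h(\bv_h,\bv_h)\ge \alpha_* a(\bv_h,\bv_h)$ for every $\bv_h\in\bV_h$.

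It then remains to bound $a(\bv_h,\bv_h)$ from below by $\|\bv_h\|_{1,\O}^2$. Since $\bv_h\in\bV_h\subset\bV=[\H_0^1(\O)]^2$, I would appeal to the ellipticity of $a(\cdot,\cdot)$ on $\bV$ recorded earlier in the excerpt, so that $a(\bv_h,\bv_h)\ge \alpha\|\bv_h\|_{1,\O}^2$ with $\alpha>0$ depending only on $\nu$ and $\O$. The point worth flagging here is that $a(\bw,\bv)=\nu\int_{\O}\nabla\bw:\nabla\bv$ controls only the $\H^1$-seminorm, so the passage to the full norm relies implicitly on the Poincar\'e--Friedrichs inequality valid on $[\H_0^1(\O)]^2$; this is precisely what the stated ellipticity encapsulates. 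Combining the two estimates yields $a_h(\bv_h,\bv_h)\ge \alpha_*\alpha\,\|\bv_h\|_{1,\O}^2$, and I would set $\tilde{\alpha}:=\alpha_*\alpha>0$, which is independent of $h$, to conclude.

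The only subtlety, and hence the step I would treat with the most care, is verifying that the constant $\tilde{\alpha}$ is genuinely $h$-independent: this requires that $\alpha_*$ in the Stability property be uniform across the mesh family, a fact that hinges on the shape-regularity hypotheses \textbf{A1}--\textbf{A2} and on the equivalence $c_0 a^{\E}\le S^{\E}\le c_1 a^{\E}$ imposed on the stabilization $S^{\E}(\cdot,\cdot)$. No further estimates are needed beyond these ingredients.
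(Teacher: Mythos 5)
Your proof is correct. The paper itself does not prove this lemma but simply cites it from the reference on the underlying Stokes VEM (Lemma~\ref{lm:ah_elliptic} is quoted as a known result, \cite[Section 3.2]{MR4332146}), and your chain of estimates --- elementwise decomposition \eqref{ec:ahch}, the uniform stability bound $a_h^{\E}(\bv_h,\bv_h)\geq\alpha_*a^{\E}(\bv_h,\bv_h)$, conformity $\bV_h\subset\bV=[\H_0^1(\O)]^2$, and the Poincar\'e-based ellipticity of $a(\cdot,\cdot)$ on $\bV$, giving $\tilde{\alpha}=\alpha_*\alpha$ independent of $h$ --- is precisely the standard argument behind that citation, including the correct observation that the $h$-uniformity of $\alpha_*$ rests on \textbf{A1}--\textbf{A2} and the two-sided bounds on $S^{\E}(\cdot,\cdot)$.
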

\noindent As a consequence of Lemma \ref{lmm:disc_inf_sup} and Lemma \ref{lm:ah_elliptic}, we introduce  the discrete 
solution operator $\bT_h$ defined by 
\begin{align*}
\bT_h:\bV\rightarrow \bV_{h},\quad
           \boldsymbol{f}\mapsto \bT_h\boldsymbol{f}:=\widehat{\bu}_h, 
\end{align*}
where $\widehat{\bu}_h$ is the solution of the following discrete source problem: Given $\boldsymbol{f}\in \bV \subset [\L^2(\Omega)]^2$, find
$\widehat{\bu}_h\in \bV_h$ such that
\begin{equation}\label{eq:weak_stokes_source_disc}
\begin{array}{rcll}
 a_h(\widehat{\bu}_h,\bv_h)+b(\bv_h,\widehat{p}_h) & = &c_{h}(\boldsymbol{f},\bv_h) &\forall\bv_h\in \bV_{h},\\
\ds b(\widehat{\bu}_h,q_h) & = & 0 &\forall\ q_h\in \Q_h.
\end{array}
\end{equation}
Since the discrete  source problem above is well posed, we have the following estimate  (see \cite[Theorem 4.4]{MR3626409})
\begin{equation*}
\|\widehat{\bu}_h\|_{1,\O}+\|\widehat{p}_h\|_{0,\O}\lesssim \|\boldsymbol{f}\|_{0,\O}.
\end{equation*}
Further from the construction of discrete space, we note that $\div\bV_h=\Q_h$ (see \cite[equation (4.17)]{MR3626409}).
As in the continuous case,  it is easy to check that $(\mu_h,\bu_h)$ is an eigenpair of $\bT_h$ if only if there exists $(\bu_h,p_h)\in\bV_h\times \Q_h$ such that $(\lambda_h, (\bu_h,p_h))\in\mathbb{R}\times \bV_h\times \Q_h$ solves \eqref{eq:weak_stokes_eigen_disc}, i.e.
\begin{equation*}
\ds \bT_h\bu_h=\mu_h\bu_h\quad\text{with}\,\,\mu_h:=1/\lambda_h\qquad \text{and}\,\,\l_h\neq0.
\end{equation*}
For better presentation of the article, we rewrite the problem \eqref{eq:weak_stokes_source_disc} as follows: Given $\boldsymbol{f}\in[\L^2(\O)]^2$, find  $(\bu_h,p_h)\in \bV_{h}\times \Q_h$ such that
\begin{equation*}
\label{eq:formulation_A_h}
A_h((\bu_h,p_h),(\bv_h,q_h))=c_{h}(\boldsymbol{f},\bv_h),\quad (\bv_h,q_h)\in \bV_{h}\times \Q_h,
\end{equation*}
where $A_h: (\bV_{h}\times \Q_h)\times (\bV_{h}\times \Q_h)\rightarrow\mathbb{R}$ is defined by
 \begin{equation}
 \label{ec:forma_Ah}
A_h((\bu_h,p_h),(\bv_h,q_h)):= a_h(\bu_h,\bv_h)+b(\bv_h,p_h)+ b(\bu_h,q_h),
\end{equation}
for all  $(\bu_h,p_h), (\bv_h,q_h)\in \bV_{h}\times \Q_h.$ Then, from the proof of \cite[Theorem 3.1]{MR3164557} we have that $A_h(\cdot,\cdot)$ is stable in the sense that given $(\bv_h,q_h)\in\bV_h\times \Q_h$, there exists $(\bw_h,s_h)\in\bV_h\times \Q_h$ such that
$\|\bv_h\|_{1,\O}+\|q_h\|_{0,\O}\leq A_h((\bv_h,q_h),(\bw_h,s_h))$ and $\|\bw_h\|_{1,\O}+\|s_h\|_{0,\O}\leq C$. Also, for every $(\bv_h,q_h), (\bw_h,s_h)\in\bV_h\times \Q_h$ there holds
\begin{equation*}A_h((\bv_h,q_h), (\bw_h,s_h))\lesssim \|(\bv_h,q_h)\|_{\bV\times \Q}\|(\bw_h,s_h)\|_{ \bV\times \Q}.\end{equation*}

\subsection{Technical approximation results}
The following approximation result for
polynomials in star-shaped domains (see for instance \cite{MR2373954}),
 is derived from results of interpolation between Sobolev spaces
(see for instance \cite[Theorem~I.1.4]{MR851383}) leading to an analogous
result for integer values of $s$. 

\begin{lemma}[Existence of a virtual approximation operator]
\label{lmm:bh}
If assumption {\bf A1} is satisfied, then  for every $s$ with 
$0\le s\le 1$ and for every $\bv\in[\H^{1+s}(K)]^2$, there exists
$\bv_{\pi}\in\mathbb{P}_1(\E)$ such that
$$
\left\|\bv-\bv_{\pi}\right\|_{0,\E}
+h_{\E}\left|\bv-\bv_{\pi}\right|_{1,\E}
\lesssim  h_{\E}^{1+s}\left\|\bv\right\|_{1+s,\E},
$$
where the hidden constant  depends only on mesh regularity constant  $\gamma$.
\end{lemma}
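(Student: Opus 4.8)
The plan is to establish the estimate at the two integer endpoints $s=0$ and $s=1$ and then recover the whole range $0<s<1$ by interpolation, precisely as the sentence preceding the statement indicates. Since the construction acts componentwise, it suffices to argue for a scalar function; I would take $\bv_\pi$ to be the degree-one \emph{averaged Taylor polynomial} of $\bv$ associated with the ball $B$ (of radius $\geq\gamma h_\E$) with respect to which $\E$ is star-shaped by Assumption~\textbf{A1}. Writing $Q^1\colon\H^1(\E)\to\mathbb{P}_1(\E)$ for this operator and $E:=I-Q^1$ for the error operator, the essential structural fact is that $Q^1$ reproduces linear polynomials, i.e. $Q^1q=q$ for all $q\in\mathbb{P}_1(\E)$, so that $E$ annihilates $\mathbb{P}_1(\E)$.

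First I would invoke the Bramble--Hilbert/Dupont--Scott estimate on a domain star-shaped with respect to a ball, which at the two integer exponents yields
\begin{align*}
\|E\bv\|_{0,\E}+h_\E\,|E\bv|_{1,\E} &\lesssim h_\E\,|\bv|_{1,\E}, \\
\|E\bv\|_{0,\E}+h_\E\,|E\bv|_{1,\E} &\lesssim h_\E^{2}\,|\bv|_{2,\E}.
\end{align*}
The crucial point here is that the chunkiness (star-shapedness) parameter governing the constants in these estimates is controlled solely by $\gamma$; this is why the hidden constants are independent of $h_\E$ and of the number and shape of the edges of $\E$, a property one cannot obtain by mapping to a single reference element but which the averaged-Taylor machinery supplies directly. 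The correct powers of $h_\E$ follow from scaling the averaging kernel, and the seminorms on the right are exactly the consequence of the polynomial-reproduction property of $Q^1$.

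To reach non-integer $s$, I would view $E$ as a bounded linear operator from the Sobolev scale $\{\H^1(\E),\H^2(\E)\}$ into $\L^2(\E)$ equipped with the $h$-scaled target norm $\|\cdot\|_{0,\E}+h_\E\,|\cdot|_{1,\E}$. Bounding $|\bv|_{1,\E}\le\|\bv\|_{1,\E}$ and $|\bv|_{2,\E}\le\|\bv\|_{2,\E}$ in the two endpoint estimates gives operator bounds with constants $C h_\E$ and $C h_\E^{2}$ respectively; operator interpolation together with $[\H^1(\E),\H^2(\E)]_{s}=\H^{1+s}(\E)$ then produces, for $0<s<1$,
\begin{equation*}
\|E\bv\|_{0,\E}+h_\E\,|E\bv|_{1,\E}\;\lesssim\;(h_\E)^{1-s}(h_\E^{2})^{s}\,\|\bv\|_{1+s,\E}\;=\;h_\E^{\,1+s}\,\|\bv\|_{1+s,\E},
\end{equation*}
which is exactly the claim with $\bv_\pi=Q^1\bv$. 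The step I would treat most carefully is this interpolation: one must keep honest track of the $h_\E$-powers through the geometric mean of the two operator constants, and one must confirm that the interpolation-space constant inherits dependence only on $\gamma$ (again a consequence of the uniform star-shapedness from Assumption~\textbf{A1}). I would also note that, because the statement asks only for the full norm $\|\bv\|_{1+s,\E}$ on the right-hand side, no quotient-space argument on $\H^{1+s}(\E)/\mathbb{P}_1(\E)$ is needed; interpolating directly on the full Sobolev scale is both legitimate and sufficient.
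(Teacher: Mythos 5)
Your proposal is correct and follows exactly the route the paper itself indicates: the paper gives no written proof of Lemma~\ref{lmm:bh}, but justifies it by citing the polynomial approximation theory on star-shaped domains (Dupont--Scott averaged Taylor polynomials, as in \cite{MR2373954}) for the integer cases $s=0,1$, combined with interpolation between Sobolev spaces (\cite[Theorem~I.1.4]{MR851383}) for the intermediate range. Your reconstruction of that argument, including the care taken that all constants depend only on the star-shapedness parameter $\gamma$ from Assumption \textbf{A1}, is faithful to what the cited references provide.
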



\noindent We also have the following result that provides the existence of an interpolated function for the velocity, together with an approximation error (see \cite[Lemma  4.2]{MR4332146}).
\begin{lemma}[Existence of an interpolation operator]
\label{estimate2}
Under the assumptions  {\bf A1} and  {\bf A2}, let $\bv\in \bV\cap[\H^{1+s}(\O)]^2$, with $0\leq s\leq 1$. Then, there exists $\bv_I\in \bV_h$ such that
\begin{equation*}
\|\bv-\bv_I\|_{0,K}+h_K|\bv-\bv_I|_{1,K}\lesssim h_K^{1+s}|\bv|_{1+s,K},
\end{equation*}
where the hidden constant is  positive and independent of $h_K$.
\end{lemma}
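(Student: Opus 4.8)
The plan is to build $\bv_I$ from the degrees of freedom of the space and to estimate the error by comparison with the local polynomial approximant furnished by Lemma~\ref{lmm:bh}. I would first define $\bv_I\in\bV_h$ elementwise: on each $\E\in\CT_h$ let $\bv_I|_{\E}\in\bV_h^\E$ be the unique virtual function whose degrees of freedom $\mathcal{V}_\E^h$, $\mathcal{L}_\E^h$ and $\mathcal{E}_\E^h$ coincide with those of $\bv$. These are well defined because, for $s>0$, the two-dimensional embedding $[\H^{1+s}(\E)]^2\hookrightarrow[C^0(\overline{\E})]^2$ makes sense of the vertex values, while the edge moments $|\ell|^{-1}\int_\ell\bv\cdot\bn$ and the interior moments against $\nabla\mathbb{P}_2(\E)^\perp$ are controlled by trace and Cauchy--Schwarz inequalities for $[\H^1(\E)]^2$ functions. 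Since adjacent elements share vertex values and edge normal moments, the resulting $\bv_I$ is globally conforming, i.e. $\bv_I\in\bV_h$.

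Next, fix $\E$ and let $\bv_{\pi}\in[\mathbb{P}_1(\E)]^2$ be the polynomial from Lemma~\ref{lmm:bh}, chosen to reproduce the affine part of $\bv$, so that after a standard scaled Bramble--Hilbert argument one has $\|\bv-\bv_{\pi}\|_{0,\E}+h_\E|\bv-\bv_{\pi}|_{1,\E}\lesssim h_\E^{1+s}|\bv|_{1+s,\E}$ with the seminorm on the right. Because $[\mathbb{P}_1(\E)]^2\subset\bV_h^\E$ and the projector $\bPi^\E$ fixes linear polynomials, the interpolation operator reproduces $[\mathbb{P}_1(\E)]^2$; hence $(\bv_\pi)_I=\bv_\pi$ and, by linearity,
\begin{equation*}
\bv-\bv_I=(\bv-\bv_\pi)-(\bv-\bv_\pi)_I .
\end{equation*}
The first term is already controlled by Lemma~\ref{lmm:bh}, so it remains to estimate the interpolation of $\bw:=\bv-\bv_\pi$.

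The crucial ingredient is a scaled stability estimate for the interpolation operator,
\begin{equation*}
\|\bw_I\|_{0,\E}+h_\E|\bw_I|_{1,\E}\lesssim \|\bw\|_{0,\E}+h_\E|\bw|_{1,\E}\qquad\forall\,\bw\in[\H^{1+s}(\E)]^2,
\end{equation*}
with a constant depending only on $\gamma$ and the constant in \textbf{A2}. I would prove this in two steps. First, a scaled norm equivalence on the finite-dimensional space $\bV_h^\E$: under \textbf{A1}--\textbf{A2} the quantity $\|\bv_h\|_{0,\E}+h_\E|\bv_h|_{1,\E}$ is equivalent, uniformly in $h_\E$, to the suitably scaled Euclidean norm of the degree-of-freedom vector of $\bv_h\in\bV_h^\E$, where star-shapedness and uniform vertex separation enter through scaled trace and inverse inequalities on the polygon and its edges. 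Second, each degree of freedom of $\bw$ is bounded, after scaling, by $h_\E^{-1}\|\bw\|_{0,\E}+|\bw|_{1,\E}$: the normal edge moments by a trace inequality, the vertex values by the $C^0$-embedding together with scaling, and the interior $\nabla\mathbb{P}_2(\E)^\perp$-moments by Cauchy--Schwarz and an inverse estimate for the test polynomials. Since $\bw_I$ carries the same degrees of freedom as $\bw$, combining the two steps yields the stability estimate.

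Applying the stability estimate to $\bw=\bv-\bv_\pi$ and bounding the right-hand side once more by Lemma~\ref{lmm:bh} gives
\begin{equation*}
\|(\bv-\bv_\pi)_I\|_{0,\E}+h_\E|(\bv-\bv_\pi)_I|_{1,\E}\lesssim h_\E^{1+s}|\bv|_{1+s,\E},
\end{equation*}
and the triangle inequality then produces the asserted bound. The main obstacle is the scaled norm equivalence on $\bV_h^\E$: unlike classical finite elements, the virtual functions are not known in closed form, so norms cannot be computed directly and one must rely on the mesh-regularity assumptions to obtain the equivalence with constants independent of $h$, taking particular care of the non-standard degrees of freedom (the quadratic normal traces and the internal $\nabla\mathbb{P}_2(\E)^\perp$ moments) peculiar to this divergence-free enhanced space.
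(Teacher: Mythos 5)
First, a point of reference: the paper never proves Lemma~\ref{estimate2} itself --- it imports it verbatim from \cite[Lemma 4.2]{MR4332146} --- so your attempt is necessarily a reconstruction. The Ciarlet-type skeleton you follow (DOF interpolant, reproduction of $[\mathbb{P}_1(\E)]^2$, scaled stability of the interpolation operator, triangle inequality against Lemma~\ref{lmm:bh}) is indeed the standard route in the VEM literature, but as written it has a genuine defect at the very first step: your interpolant is ill-defined in the enhanced space. In $\bV_h^\E$ the internal moments $\int_\E\bv_h\cdot\bg^{\perp}$, $\bg^{\perp}\in\nabla\mathbb{P}_2(\E)^{\perp}$, are \emph{not} free degrees of freedom; by the defining constraint of $\bV_h^\E$ they are slaved to the projection, $\int_\E\bv_h\cdot\bg^{\perp}=\int_\E\bPi^{\E}\bv_h\cdot\bg^{\perp}$. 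Prescribing the vertex values, the edge moments \emph{and} the internal moments to coincide with those of $\bv$ determines a unique element of the larger space $\widehat{\bV}_h^\E$, which generically violates the enhancement constraint and therefore does not belong to $\bV_h^\E$, hence not to $\bV_h$. The repair is to impose only the vertex values and the edge normal moments (the actual unisolvent DOFs of $\bV_h^\E$); polynomial reproduction survives, since $\bPi^{\E}$ fixes $[\mathbb{P}_1(\E)]^2$ and affine functions automatically satisfy the constraint.

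Two further steps fail as stated. Your scaled stability estimate $\|\bw_I\|_{0,\E}+h_\E|\bw_I|_{1,\E}\lesssim\|\bw\|_{0,\E}+h_\E|\bw|_{1,\E}$ is false in two dimensions: $\H^1(\E)\not\hookrightarrow C^0(\overline{\E})$, so the vertex values of $\bw$ cannot be controlled by $h_\E^{-1}\|\bw\|_{0,\E}+|\bw|_{1,\E}$; the correct right-hand side must carry the additional term $h_\E^{1+s}|\bw|_{1+s,\E}$. Your final conclusion happens to survive this correction, because for affine $\bv_\pi$ one has $|\bv-\bv_\pi|_{1+s,\E}=|\bv|_{1+s,\E}$, but the auxiliary lemma you rely on is unprovable in the form you wrote it. More seriously, the endpoint $s=0$ --- which the statement of Lemma~\ref{estimate2} includes, and which the paper explicitly invokes in the proof of Lemma~\ref{lmm:converg1} --- is entirely out of reach of your construction: for $\bv$ merely in $[\H^1(\O)]^2$ point values at vertices are meaningless, so the DOF-matching interpolant does not exist, let alone satisfy an estimate with a purely local right-hand side. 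Covering $s=0$ requires a different device, for instance composing with a Cl\'ement/Scott--Zhang quasi-interpolant (whose functionals are averaged rather than pointwise) before interpolating into $\bV_h$, or a DOF--norm equivalence argument that avoids point evaluations; this is essentially what the reference \cite{MR4332146} cited by the paper does.
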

\noindent Finally, let $\mathcal{P}_{h}:\L^2(\O)\to \left\{q\in \L^2(\O): q|_{\E}\in\mathbb{P}_{0}(\E)\right\}$  be the $\L^2(\O)$-orthogonal projector which satisfies the following approximation property.
\begin{lemma}
\label{estimate3}
If $0\leq s\leq 1$, it holds
\begin{equation*}
\|q-\mathcal{P}_{h}(q)\|_{0,\O}\lesssim h^{s}\|q\|_{s,\O} \qquad \forall q\in\H^s(\O)\cap \Q.
\end{equation*}
\end{lemma}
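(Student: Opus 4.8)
The plan is to exploit that $\mathcal{P}_h$ acts elementwise: on each $K\in\CT_h$ the restriction $\mathcal{P}_h(q)|_K=|K|^{-1}\int_K q$ is precisely the best $\L^2(K)$-approximation of $q$ by constants, so that
\begin{equation*}
\|q-\mathcal{P}_h(q)\|_{0,K}=\inf_{c\in\mathbb{P}_0(K)}\|q-c\|_{0,K}.
\end{equation*}
It therefore suffices to bound the right-hand side on each element and then sum the local contributions, using $h_K\leq h$, to recover the global estimate. Note that the zero-mean constraint $q\in\Q$ plays no role here, since the bound is purely local.

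First I would treat the two integer endpoints. For $s=0$ the projector is an $\L^2$-contraction, so $\|q-\mathcal{P}_h(q)\|_{0,K}\leq\|q\|_{0,K}$, which is the claimed bound with $h_K^{0}=1$. For $s=1$, since $\mathcal{P}_h$ reproduces constants, a Bramble--Hilbert (Deny--Lions) argument on the element, combined with a scaling to a reference configuration, yields $\|q-\mathcal{P}_h(q)\|_{0,K}\lesssim h_K\,|q|_{1,K}\leq h_K\,\|q\|_{1,K}$. Assumption \textbf{A1} is what makes the hidden constant uniform: star-shapedness with respect to a ball of radius at least $\gamma h_K$ guarantees a Poincar\'e-type inequality on $K$ with a constant depending only on $\gamma$, independent of $h_K$.

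For the remaining fractional range $0<s<1$ I would view $I-\mathcal{P}_h$ as a single linear operator and interpolate. The two endpoint estimates say that, on each $K$, the operator $I-\mathcal{P}_h$ is bounded from $\H^0(K)=\L^2(K)$ into $\L^2(K)$ with norm $\lesssim 1$ and from $\H^1(K)$ into $\L^2(K)$ with norm $\lesssim h_K$. Real interpolation between Sobolev spaces (the $K$-method, with $[\L^2(K),\H^1(K)]_{s,2}=\H^s(K)$, cf.\ the reference to \cite{MR851383} invoked for Lemma~\ref{lmm:bh}) then gives a bound from $\H^s(K)$ into $\L^2(K)$ with norm $\lesssim h_K^{\,s}$, that is $\|q-\mathcal{P}_h(q)\|_{0,K}\lesssim h_K^{\,s}\,\|q\|_{s,K}$. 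Squaring, summing over $K\in\CT_h$, and bounding $h_K\leq h$ produces the asserted global inequality.

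The main obstacle is ensuring that the interpolation constant is uniform over the mesh for fractional $s$. Because the fractional seminorm $|\cdot|_{s,K}$ does not scale as cleanly as the integer ones, the clean way is to carry out the interpolation on a fixed reference element and transport the estimate back via the scaling map, controlling the Jacobian through \textbf{A1}; the shape-regularity then absorbs all the metric factors into a constant depending only on $\gamma$. Once the local estimate holds with an $h_K$-independent constant, the summation step is routine.
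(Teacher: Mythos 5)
Your argument is correct, and it fills in the standard proof that the paper itself omits: Lemma~\ref{estimate3} is stated as a known property of the $\L^2$-projector, with the section's preamble pointing to polynomial approximation on star-shaped domains \cite{MR2373954} combined with interpolation between Sobolev spaces \cite[Theorem~I.1.4]{MR851383}, which is precisely your endpoint-plus-interpolation scheme. The one place where your route diverges from the cleanest version of the cited argument is that you interpolate \emph{elementwise}, which forces you to confront the uniformity of the equivalence $[\L^2(K),\H^1(K)]_{s,2}=\H^s(K)$ over a family of polygons of varying shape and size --- note there is no single fixed reference element for a polygonal mesh, only the class of rescaled domains $K/h_K$ of diameter one that are star-shaped with respect to a ball of radius $\gamma$, over which the constants are uniform but nontrivially so. A simpler variant sidesteps this entirely: treat $I-\mathcal{P}_h$ as one \emph{global} operator, observe that it is bounded $\L^2(\O)\to\L^2(\O)$ with norm $\leq 1$ (orthogonal projection) and $\H^1(\O)\to\L^2(\O)$ with norm $\lesssim h$ (summing your local Poincar\'e estimates, whose constants depend only on $\gamma$ by \textbf{A1}), and then interpolate once on the fixed domain $\O$, where $[\L^2(\O),\H^1(\O)]_{s,2}=\H^s(\O)$ holds with constants depending only on $\O$. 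This buys you the fractional case with no per-element scaling of fractional norms at all. Your elementwise version is also salvageable as written --- the summation step is legitimate since the Sobolev--Slobodeckij seminorms are superadditive over disjoint elements, i.e.\ $\sum_{K\in\CT_h}|q|_{s,K}^2\leq |q|_{s,\O}^2$ --- but the global interpolation is the shorter path, and is the one the paper's reference effectively takes. Your remarks that the zero-mean constraint is irrelevant and that \textbf{A1} is what makes the $s=1$ Poincar\'e constant uniform are both on target.
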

\subsection{Spectral approximation}
In what follows we  will focus in the approximation between $\bT$ and $\bT_h$, implying the analysis of convergence for the eigenvalues and the respective eigenfunctions. Since $\bT$ is a compact operator, it is sufficient to analyze the spectral correctness with the classic theory of \cite{BO}, implying as a first step, to obtain the convergence in norm between $\bT_h$ and $\bT$, which immediately
yields to the convergence of the eigenvalues and eigenfunctions, with no spurious eigenvalues.
\begin{remark}
In the discretization of $c^{\E}(\cdot,\cdot)$, we have considered the polynomial part of the discrete bilinear form. With this discretization, it is justified to define the solution operator $\bT$ on $[\H^1(\Omega)]^2$, and to employ the spectral theory for compact operator \cite{BO}. However, one can discretize $c^{\E}(\cdot,\cdot)$ following Remark~\ref{stab:mass}. In such case, we can derive the convergence analysis by following the spectral theory for nonselfadjoint operator \cite{MR447842}.
\end{remark}
\noindent We begin by proving the convergence in norm between the continuous and discrete solution operators.
\begin{lemma}[Convergence of $\bT_h$ to $\bT$]
\label{lmm:converg1}
There exists $s>0$ such that
\begin{equation*}
\|(\bT-\bT_h)\bF\|_{1,\O}=\|\widehat{\bu}-\widehat{\bu}_h\|_{1,\O}\leq \|\widehat{\bu}-\widehat{\bu}_h\|_{1,\O}+\|\widehat{p}-\widehat{p}_h\|_{0,\O}\lesssim  h^{\min\{1,s\}}\|\bF\|_{1,\O},
\end{equation*}
where the hidden constant is independent of $h$.
\end{lemma}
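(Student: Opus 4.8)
The plan is to run a second--Strang--type argument for the saddle point discretization, exploiting the global inf--sup stability of $A_h(\cdot,\cdot)$ recorded above. Writing $\widehat{\bu}=\bT\bF$, $\widehat{\bu}_h=\bT_h\bF$ with associated pressures $\widehat{p},\widehat{p}_h$, I first introduce the velocity interpolant $\bu_I\in\bV_h$ from Lemma~\ref{estimate2} and the piecewise $\L^2$ pressure projection $\Ph\widehat{p}\in\Q_h$ from Lemma~\ref{estimate3}, and split by the triangle inequality $\|\widehat{\bu}-\widehat{\bu}_h\|_{1,\O}\le\|\widehat{\bu}-\bu_I\|_{1,\O}+\|\bu_I-\widehat{\bu}_h\|_{1,\O}$ (and analogously for the pressure). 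The interpolation terms $\|\widehat{\bu}-\bu_I\|_{1,\O}$ and $\|\widehat{p}-\Ph\widehat{p}\|_{0,\O}$ are controlled directly by Lemmas~\ref{estimate2} and~\ref{estimate3} together with the regularity estimate \eqref{eq:data_dependence_cont}, which produces the required $h^{\min\{1,s\}}\|\bF\|_{0,\O}$. Everything then reduces to bounding the fully discrete quantities $\be_h:=\bu_I-\widehat{\bu}_h$ and $\epsilon_h:=\Ph\widehat{p}-\widehat{p}_h$.

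For these, I apply the stability of $A_h$ to $(\be_h,\epsilon_h)$ to obtain a test pair $(\bw_h,s_h)\in\bV_h\times\Q_h$ with $\|\bw_h\|_{1,\O}+\|s_h\|_{0,\O}\le C$ and $\|\be_h\|_{1,\O}+\|\epsilon_h\|_{0,\O}\le A_h((\be_h,\epsilon_h),(\bw_h,s_h))$. Expanding $A_h$ via \eqref{ec:forma_Ah}, and using the discrete equations \eqref{eq:weak_stokes_source_disc} for $(\widehat{\bu}_h,\widehat{p}_h)$ together with the continuous equations \eqref{eq:weak_stokes_source} for $(\widehat{\bu},\widehat{p})$ tested against $\bw_h\in\bV_h\subset\bV$, the cross terms collapse to
\begin{equation*}
A_h((\be_h,\epsilon_h),(\bw_h,s_h))
= \big[a_h(\bu_I,\bw_h)-a(\widehat{\bu},\bw_h)\big]
+ \big[c(\bF,\bw_h)-c_h(\bF,\bw_h)\big]
+ b(\bw_h,\Ph\widehat{p}-\widehat{p})
+ b(\bu_I,s_h).
\end{equation*}
The two pressure--type remainders vanish: $b(\bw_h,\Ph\widehat{p}-\widehat{p})=0$ because $\div\bw_h$ is piecewise constant (as $\div\bV_h=\Q_h$) while $\Ph\widehat{p}-\widehat{p}$ is $\L^2$--orthogonal to piecewise constants; and $b(\bu_I,s_h)=0$ because the edge normal--flux degrees of freedom $\mathcal{L}^h$ force $\int_\E\div\bu_I=\int_{\partial\E}\bu_I\cdot\bn=\int_{\partial\E}\widehat{\bu}\cdot\bn=\int_\E\div\widehat{\bu}=0$ on each $\E$, so the interpolant is exactly solenoidal at the discrete level.

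It then remains to bound the two surviving contributions. For the consistency term I insert the local polynomial $\bu_\pi$ of Lemma~\ref{lmm:bh}, use the polynomial consistency of $a_h^\E$ to cancel $a_h^\E(\bu_\pi,\bw_h)=a^\E(\bu_\pi,\bw_h)$, and bound the remainders $a_h^\E(\bu_I-\bu_\pi,\bw_h)$ and $a^\E(\widehat{\bu}-\bu_\pi,\bw_h)$ by the stability of $a_h^\E$ and Cauchy--Schwarz; summing over $\E$ and invoking Lemmas~\ref{estimate2} and~\ref{lmm:bh} yields $|a_h(\bu_I,\bw_h)-a(\widehat{\bu},\bw_h)|\lesssim h^{\min\{1,s\}}\|\widehat{\bu}\|_{1+s,\O}\|\bw_h\|_{1,\O}$. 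For the mass term I use the projection identity $c^\E(\bF,\bw_h)-c_h^\E(\bF,\bw_h)=\int_\E(\bF-\bPi_0^{\E}\bF)\cdot(\bw_h-\bPi_0^{\E}\bw_h)$ together with the first--order accuracy of $\bPi_0^{\E}$ on $[\H^1(\E)]^2$, giving a higher--order $h^2|\bF|_{1,\O}\|\bw_h\|_{1,\O}$ contribution. Since $\|\bw_h\|_{1,\O}\le C$, combining these with \eqref{eq:data_dependence_cont} gives $\|\be_h\|_{1,\O}+\|\epsilon_h\|_{0,\O}\lesssim h^{\min\{1,s\}}\|\bF\|_{1,\O}$, and the triangle inequalities close the estimate.

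I expect the main obstacle to be the bookkeeping that makes the two pressure remainders vanish --- in particular verifying the commuting/divergence property $\div\bu_I=0$ through the edge normal--flux degrees of freedom and the containment $\div\bw_h\in\Q_h$ --- since without these cancellations the pressure error would pollute the velocity estimate and the clean $h^{\min\{1,s\}}$ rate would be lost. The remaining consistency and mass--lumping bounds are routine VEM estimates.
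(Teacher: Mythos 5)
Your proposal is correct and follows the same backbone as the paper's own proof: the identical triangle--inequality splitting through the interpolant of Lemma~\ref{estimate2} and the projection $\Ph(\widehat{p})$ of Lemma~\ref{estimate3}, the same reduction of the discrete remainder via the inf--sup stability of $A_h(\cdot,\cdot)$, the same insertion of the local polynomial $\bu_\pi$ of Lemma~\ref{lmm:bh} with consistency/stability of $a_h^{\E}$, and the same $\L^2$--orthogonality identity for the mass term $c-c_h$. The one real difference lies in the two pressure--type remainders. The paper does \emph{not} cancel them: it keeps $b^{\E}(\bv_h,\Ph(\widehat{p})-\widehat{p})$ and $b^{\E}(\widehat{\bu}_I-\widehat{\bu},q_h)$ and bounds them by $\|\Ph(\widehat{p})-\widehat{p}\|_{0,\E}|\bv_h|_{1,\E}$ and $|\widehat{\bu}_I-\widehat{\bu}|_{1,\E}\|q_h\|_{0,\E}$, which already carry the $h^{\min\{1,s\}}$ rate through Lemmas~\ref{estimate2}--\ref{estimate3}. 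Your exact cancellations are sharper and make the saddle--point structure more transparent, but note what they rely on: $b(\bw_h,\Ph\widehat{p}-\widehat{p})=0$ is indeed automatic from $\div\bv_h\in\mathbb{P}_0(\E)$ in the definition of the space, while $b(\bu_I,s_h)=0$ requires the interpolant to reproduce the edge normal--flux degrees of freedom, a property of the canonical DOF interpolant of \cite{MR4332146} that is \emph{not} part of the statement of Lemma~\ref{estimate2} as given in the paper (which only asserts existence of some $\bv_I$ with approximation properties). If you wish to remain strictly within the lemmas as stated, replace that cancellation by the paper's fallback, $b(\bu_I,s_h)=b(\bu_I-\widehat{\bu},s_h)\lesssim|\bu_I-\widehat{\bu}|_{1,\O}\|s_h\|_{0,\O}\lesssim h^{s}\|\bF\|_{0,\O}$, using $b(\widehat{\bu},s_h)=0$; the final estimate is unchanged either way.
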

\begin{proof}
Let $\boldsymbol{f}\in[\H^1(\O)]^2$ be such that $\widehat{\bu}:=\bT\boldsymbol{f}$ and $\widehat{\bu}_h:=\bT\boldsymbol{f}$, where $(\widehat{\bu},\widehat{p})\in[\H_0^1(\O)]^2\times\L^2_0(\O)$
is the solution of \eqref{eq:weak_stokes_source} and $(\widehat{\bu}_h,\widehat{p}_h)\in\bV\times\Q_h$ is the solution of \eqref{eq:weak_stokes_source_disc}. Then, we have
\begin{multline*}
\|(\bT-\bT_h)\bF\|_{1,\O}\leq \|\widehat{\bu}-\widehat{\bu}_h\|_{1,\O}+\|\widehat{p}-\widehat{p}_h\|_{0,\O}\\
\leq \|\widehat{\bu}-\widehat{\bu}_I\|_{1,\O}+\|\widehat{p}-\Ph(\widehat{p})\|_{0,\O}
+\underbrace{\|\widehat{\bu}_I-\widehat{\bu}_h\|_{1,\O}+\|\Ph(\widehat{p})-\widehat{p}_h\|_{0,\O}}_{\mathbf{(I)}}.
\end{multline*}
The aim now is to estimate each of the contributions of the right hand side  on the inequality above. We observe that invoking Lemma \ref{estimate2} with $s=0$ and \eqref{eq:data_dependence_cont}, we obtain the estimate  
\begin{equation}\label{eq:first_eq}
\|\widehat{\bu}-\widehat{\bu}_I\|_{1,\O}\lesssim h^s\|\widehat{\bu}\|_{1+s,\O}\lesssim h^s\|\bF\|_{0,\O}.
\end{equation}
On the other hand, using Lemma \ref{estimate3} we have
\begin{equation}\label{eq:second_eq}
\|\widehat{p}-\Ph(\widehat{p})\|_{0,\O}\lesssim h^s\|\widehat{p}\|_{s,\O}\lesssim h^s\|\bF\|_{0,\O}.
\end{equation}
Now our aim is to estimate $\mathbf{(I)}$. To do this  task, we resort to the following  estimate, consequence of the stability of the discrete problem (see the proof of \cite[Theorem 3.1]{MR3164557}
$$\|\widehat{\bu}_I-\widehat{\bu}_h\|_{1,\O}+\|\Ph(\widehat{p})-\widehat{p}_h\|_{0,\O}\lesssim  A_h((\widehat{\bu}_I-\widehat{\bu}_h,\Ph(\widehat{p})-\widehat{p}_h),(\bv_h,q_h)),$$
with $\|\bv_h\|_{1,\O}+\|q_h\|_{0,\O}\leq 1$. Hence,
 
\begin{multline*}
\mathbf{(I)}\lesssim  A_h((\widehat{\bu}_I,\Ph(\widehat{p})-\widehat{p}_h),(\bv_h,q_h))-c_h(\bF,\bv_h)\\
=\sum_{\E\in\CT_h}[a_h^\E(\widehat{\bu}_I-\widehat{\bu}_\pi,\bv_h)+a^E(\widehat{\bu}_\pi-\widehat{\bu},\bv_h)+a^\E(\widehat{\bu},\bv_h)
+b^{\E}(\bv_h,\widehat{p}_h)+b^{\E}(\widehat{\bu}_I.q_h)]-c_h(\bF,\bv_h)\\
=\sum_{\E\in\CT_h}c^{\E}(\bF,\bv_h-\Pi_0^\E\bv_h)+\sum_{\E\in\CT_h}[ a_h^{\E}(\widehat{\bu}_I-\widehat{\bu}_{\pi},\bv_h)+a^{\E}(\widehat{\bu}_{\pi}-\widehat{\bu},\bv_h)\\
+b^{\E}(\bv_h,\Ph(\widehat{p})-\widehat{p})
+b^{\E}(\widehat{\bu}_I-\widehat{\bu},q_h)]\\
\lesssim \sum_{\E\in\CT_h}\|\bF\|_{0,\E}\|\bv_h-\bPi_0^\E\bv_h\|_{0,\E}+\sum_{\E\in\CT_h}(|\widehat{\bu}_I-\widehat{\bu}|_{1,\E}+|\widehat{\bu}-\widehat{\bu}_{\pi}|_{1,\E})|\bv_h|_{1,\E}\\
+\sum_{\E\in\CT_h}|\widehat{\bu}_\pi-\widehat{\bu}|_{1,\E}|\bv_h|_{1,\E}+\sum_{\E\in\CT_h}\|\Ph(\widehat{p})-\widehat{p}\|_{0,\E}|\bv_h|_{1,\E}+\sum_{\E\in\CT_h}|\widehat{\bu}_I-\widehat{\bu}|_{1,\E}\|q_h\|_{0,\E}\\
\lesssim  \sum_{\E\in\CT_h}\left(h_{\E}\|\bF\|_{0,\E}+h_{\E}^s\|\widehat{\bu}\|_{1+s,\E}+h_{\E}^s\|\widehat{p}\|_{s,\E}\right)\|\bv_h\|_{1,\E}+\sum_{\E\in\CT_h}h_{\E}^s\|\widehat{\bu}\|_{1+s,\E}\|q_h\|_{0,\E}\\
\lesssim h^{\min\{1,s\}}\|\bF\|_{0,\O},
\end{multline*}
where the last estimate is a consequence of $\|\bv_h\|_{1,\O}+\|q_h\|_{0,\O}\leq 1$, Lemmas \ref{lmm:bh}--\ref{estimate3}, and  \eqref{eq:data_dependence_cont}. Hence, the proof is derived from the above estimate together with \eqref{eq:first_eq} and \eqref{eq:second_eq}.
\end{proof}

\subsection{Error estimates}
\label{SEC:BUCKL:SPEC}
As a direct consequence of Lemma~\ref{lmm:converg1}, standard results about
spectral approximation (see \cite{MR1335452}, for instance) show that isolated
parts of $\sp(\bT)$ are approximated by isolated parts of $\sp(\bT_h)$. More
precisely, let $\mu\in(0,1)$ be an isolated eigenvalue of $\bT$ with
multiplicity $m$ and let $\mathfrak{E}$ be its associated eigenspace. Then, there
exist $m$ eigenvalues $\mu^{(1)}_h,\dots,\mu^{(m)}_h$ of $\bT_h$ (repeated
according to their respective multiplicities) which converge to $\mu$.
Let $\mathfrak{E}_h$ be the direct sum of their corresponding associated
eigenspaces. Further, we recall the definition of the \textit{gap} $\hdel$ between two closed
subspaces $\CM$ and $\CN$ of $\H_0^1(\O)$:
$$
\hdel(\CM,\CN)
:=\max\left\{\delta(\CM,\CN),\delta(\CN,\CM)\right\},$$
where
$$\delta(\CM,\CN)
:=\sup_{x\in\CM:\ \left\|x\right\|_{1,\O}=1}
\left(\inf_{y\in\CN}\left\|x-y\right\|_{1,\O}\right).
$$
With these definitions at hand, now we are in a position to establish error estimates for the approximation of the eigenspaces. 
\begin{theorem}[Error estimates]
\label{thm:error_spaces}
The following estimates hold
\begin{align*}
\widehat{\delta}(\mathfrak{E},\mathfrak{E}_h)\lesssim h^{\min\{1,r\}}\quad\text{and}\quad
|\mu-\mu_h^{(i)}|\lesssim h^{\min\{1,r\}},
\end{align*}
where the hidden constants are independent of $h$.
\end{theorem}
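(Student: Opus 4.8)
The plan is to invoke the classical spectral approximation theory for compact selfadjoint operators developed in \cite{BO} (see also \cite{MR1335452}), whose essential hypothesis---convergence in norm $\|\bT-\bT_h\|\to 0$---has already been secured in Lemma~\ref{lmm:converg1}. This abstract framework reduces both estimates to controlling a single quantity, namely the operator norm of $\bT-\bT_h$ restricted to the continuous eigenspace $\mathfrak{E}$. Indeed, since $\bT$ is compact and selfadjoint and $\mu$ is an isolated eigenvalue of finite multiplicity $m$, the spectral projectors associated with $\mathfrak{E}$ and $\mathfrak{E}_h$ are close once $\bT_h$ converges to $\bT$ in norm, and one obtains
\[
\hdel(\mathfrak{E},\mathfrak{E}_h)\lesssim \gamma_h,\qquad |\mu-\mu_h^{(i)}|\lesssim \gamma_h,\qquad i=1,\dots,m,
\]
where $\gamma_h:=\sup\{\|(\bT-\bT_h)\bu\|_{1,\O}:\ \bu\in\mathfrak{E},\ \|\bu\|_{1,\O}=1\}$ is the norm of $(\bT-\bT_h)|_{\mathfrak{E}}$. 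Thus the whole theorem follows once we bound $\gamma_h$ by $h^{\min\{1,r\}}$.

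The second and decisive step is to estimate $\gamma_h$ using the elliptic regularity of the eigenfunctions. Any $\bu\in\mathfrak{E}$ is a linear combination of eigenfunctions of $\bT$, and by the regularity result stated just before Theorem~\ref{thrm:spec_char_T} we have $\bu\in[\H^{1+r}(\O)]^2$ and the associated pressure $p\in\H^r(\O)$, with $\|\bu\|_{1+r,\O}+\|p\|_{r,\O}\lesssim\|\bu\|_{0,\O}\lesssim\|\bu\|_{1,\O}$. Since $\bu$ is an eigenfunction, $\bT\bu=\mu\bu$ inherits this higher regularity; hence, repeating verbatim the argument of Lemma~\ref{lmm:converg1} but invoking the interpolation and projection estimates of Lemmas~\ref{lmm:bh}--\ref{estimate3} with the eigenfunction regularity index $r$ in place of the generic source regularity $s$, I obtain
\[
\|(\bT-\bT_h)\bu\|_{1,\O}\lesssim h^{\min\{1,r\}}\bigl(\|\bu\|_{1+r,\O}+\|p\|_{r,\O}\bigr)\lesssim h^{\min\{1,r\}}\|\bu\|_{1,\O}.
\]
Taking the supremum over the unit sphere of the finite-dimensional space $\mathfrak{E}$ yields $\gamma_h\lesssim h^{\min\{1,r\}}$, which combined with the abstract bounds of the previous step proves both assertions.

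The main obstacle, and the point requiring care, is the passage from the generic rate $h^{\min\{1,s\}}$ of Lemma~\ref{lmm:converg1} to the sharp rate $h^{\min\{1,r\}}$ dictated by the eigenfunction regularity: one must recognize that restricting $\bT-\bT_h$ to $\mathfrak{E}$ replaces an arbitrary datum $\bF$ by an eigenfunction whose image under $\bT$ enjoys the higher smoothness $\H^{1+r}$, so that the estimate is driven by $r$ rather than by the source-problem index $s$. I also emphasize that this theorem delivers only the single-order eigenvalue bound $|\mu-\mu_h^{(i)}|\lesssim h^{\min\{1,r\}}$; the improved double-order estimate announced in the abstract requires exploiting the selfadjointness of $\bT$ through the symmetric identity relating $\mu-\mu_h$ to $\gamma_h^2$, and is treated separately.
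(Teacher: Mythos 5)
Your proposal is correct and takes essentially the same route as the paper: the paper's entire proof is a one-line citation of Lemma~\ref{lmm:converg1} together with the abstract spectral approximation theory of \cite{BO}, which is exactly the argument you carry out in detail. Your elaboration---reducing both bounds to the norm of $(\bT-\bT_h)|_{\mathfrak{E}}$ and then rerunning the proof of Lemma~\ref{lmm:converg1} with the eigenfunction regularity index $r$ in place of the source regularity $s$---is precisely the standard filling-in of that citation, including the point (left implicit by the paper) of why the rate is governed by $r$ rather than $s$.
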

\begin{proof}
The proof is a direct consequence of Lemma \ref{lmm:converg1} and \cite{BO}. 
\end{proof}
\noindent Also, the classic double order of convergence for the eigenvalues is derived and proved in the following result.
\begin{theorem}(Double order of convergence for eigenvalues)
\label{cotadoblepandeo} 
For all $r>0$, there holds
$$\
\left|\l-\l_h^{(i)}\right|
\lesssim h^{2\min\{1,r\}},
$$
where the hidden constant is strictly positive and independent of $h$.
\end{theorem}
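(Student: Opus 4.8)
The plan is to bound $\lambda-\lambda_h^{(i)}$ directly through a Rayleigh-quotient--type error identity, exploiting the self-adjointness of the problem together with the crucial fact that the discrete velocity is \emph{exactly} divergence free. Fix an eigenvalue $\lambda=1/\mu$ and let $\bu_h$ be a discrete eigenfunction associated with one of the $\lambda_h^{(i)}=1/\mu_h^{(i)}$, normalized so that $c_h(\bu_h,\bu_h)=1$. Using Theorem~\ref{thm:error_spaces} (equivalently, the gap estimate) I would pick a continuous eigenfunction $\bu\in\mathfrak{E}$, normalized by $c(\bu,\bu)=1$, such that $\|\bu-\bu_h\|_{1,\O}\lesssim h^{\min\{1,r\}}$. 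The target is the identity
\begin{equation*}
\lambda_h-\lambda=\bigl[a(\bu_h-\bu,\bu_h-\bu)-\lambda\,c(\bu_h-\bu,\bu_h-\bu)\bigr]+\bigl[a_h(\bu_h,\bu_h)-a(\bu_h,\bu_h)\bigr]+\lambda\bigl[c(\bu_h,\bu_h)-c_h(\bu_h,\bu_h)\bigr],
\end{equation*}
after which I would show that each bracket is $O(h^{2\min\{1,r\}})$.

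To derive the identity I would first observe that $\bV_h\subset\bV$ and that the discrete constraint forces $\div\bu_h=0$: indeed $\div\bu_h|_\E\in\mathbb{P}_0(\E)$ has zero global mean (as $\bu_h\in\bV$) and is $\L^2$-orthogonal to $\Q_h=\div\bV_h$, so $\div\bu_h\equiv0$. Consequently $b(\bu_h,p)=b(\bu_h,p_h)=0$. Testing the continuous problem \eqref{eq:weak_stokes_eigen} with $\bv=\bu_h$ gives $a(\bu,\bu_h)=\lambda\,c(\bu,\bu_h)$, while testing \eqref{eq:weak_stokes_eigen_disc} with $\bv_h=\bu_h$ gives $a_h(\bu_h,\bu_h)=\lambda_h\,c_h(\bu_h,\bu_h)=\lambda_h$. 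Expanding the first bracket by bilinearity and using $a(\bu,\bu)=\lambda\,c(\bu,\bu)$, $a(\bu,\bu_h)=\lambda\,c(\bu,\bu_h)$ and symmetry, the cross terms and the pure-$\bu$ term cancel, leaving $a(\bu_h,\bu_h)-\lambda\,c(\bu_h,\bu_h)$; inserting $a_h(\bu_h,\bu_h)=\lambda_h$ together with the two normalizations then reproduces the displayed identity.

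The first bracket is harmless: by continuity of $a(\cdot,\cdot)$ and $c(\cdot,\cdot)$ and Theorem~\ref{thm:error_spaces},
\begin{equation*}
|a(\bu_h-\bu,\bu_h-\bu)|+\lambda\,|c(\bu_h-\bu,\bu_h-\bu)|\lesssim\|\bu-\bu_h\|_{1,\O}^2\lesssim h^{2\min\{1,r\}}.
\end{equation*}
The genuine difficulty, and the reason the order doubles, lies in the two consistency brackets, which encode the variational crimes $a_h\neq a$ and $c_h\neq c$. For the $a_h$-term I would work element by element: choosing a local polynomial approximant $\bu_\pi$ of $\bu$ from Lemma~\ref{lmm:bh} and invoking the polynomial consistency of $a_h^{\E}$, the difference collapses to $a_h^{\E}(\bu_h-\bu_\pi,\bu_h-\bu_\pi)-a^{\E}(\bu_h-\bu_\pi,\bu_h-\bu_\pi)$, which by the stability of $a_h^{\E}$ is bounded by $|\bu_h-\bu_\pi|_{1,\E}^2\lesssim|\bu_h-\bu|_{1,\E}^2+h_\E^{2\min\{1,r\}}\|\bu\|_{1+\min\{1,r\},\E}^2$; summing over $\E$ and applying Theorem~\ref{thm:error_spaces} and the eigenfunction regularity yields $h^{2\min\{1,r\}}$. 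For the $c_h$-term, since $c_h^{\E}(\bu_h,\bu_h)=\|\bPi_0^{\E}\bu_h\|_{0,\E}^2$ and $\bPi_0^{\E}$ is the $\L^2$-orthogonal projection, the difference equals $\sum_{\E}\|\bu_h-\bPi_0^{\E}\bu_h\|_{0,\E}^2$, and by best approximation $\|\bu_h-\bPi_0^{\E}\bu_h\|_{0,\E}\le\|\bu_h-\bu_\pi\|_{0,\E}\lesssim\|\bu_h-\bu\|_{0,\E}+h_\E^{1+\min\{1,r\}}\|\bu\|_{1+\min\{1,r\},\E}$, which again squares and sums to $h^{2\min\{1,r\}}$. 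Combining the three brackets closes the estimate. The one technical subtlety to treat with care is the reconciliation of the two normalizations ($c$-unit for $\bu$ versus $c_h$-unit for $\bu_h$) for the selected pair $(\bu,\bu_h)$, but this only perturbs the constants and is standard; I expect the main labor to be the two consistency brackets, where the polynomial consistency of $a_h^{\E}$ and the projection structure of $c_h^{\E}$ must be combined with the already established $\H^1$ convergence of the eigenfunctions.
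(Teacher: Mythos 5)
Your proposal is correct, and at its core it is the same argument as the paper's: an algebraic identity expressing the eigenvalue error as quadratic-in-error terms plus the two VEM consistency defects, each bounded by $h^{2\min\{1,r\}}$ via Theorem~\ref{thm:error_spaces}, Lemma~\ref{lmm:bh}, the consistency/stability of $a_h^{\E}$, and the $\L^2$-projection structure of $c_h^{\E}$; your handling of the two consistency brackets coincides with the paper's terms $\textbf{I}$ and $\textbf{II}$ almost verbatim. The bookkeeping, however, differs in two ways, both to your advantage. First, the paper normalizes $\|\bu_h\|_{1,\O}=1$, so its identity carries the prefactor $c(\bu_h,\bu_h)$ on the left-hand side and the proof needs the additional lower bound $c(\bu_h,\bu_h)\geq\widetilde{C}>0$ (estimate \eqref{eq:termV}), which is the least transparent step of the paper's argument; your normalization $c_h(\bu_h,\bu_h)=1$ gives $a_h(\bu_h,\bu_h)=\lambda_h^{(i)}$ exactly and removes that prefactor altogether. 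The one point you should make explicit is that under this normalization $\|\bu_h\|_{1,\O}\lesssim 1$ (immediate from Lemma~\ref{lm:ah_elliptic}, since $\tilde{\alpha}\|\bu_h\|_{1,\O}^2\leq a_h(\bu_h,\bu_h)=\lambda_h^{(i)}$, which is bounded), so that the gap estimate of Theorem~\ref{thm:error_spaces} applies after rescaling. Second, the paper's quadratic term is $A((\bu-\bu_h,p-p_h),(\bu-\bu_h,p-p_h))$ and therefore requires the pressure error bound $\|p-p_h\|_{0,\O}\lesssim h^{\min\{1,r\}}$; you instead exploit the exact divergence-freeness of $\bu_h$ --- your argument ($\div\bu_h$ piecewise constant with zero mean, hence in $\Q_h$, and orthogonal to $\Q_h$, hence zero) is valid for this space --- to annihilate $b(\bu_h,p)$ and $b(\bu_h,p_h)$, so the pressure never enters your identity. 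Finally, the ``normalization reconciliation'' you flag as a subtlety is in fact vacuous: in your expansion the pure-$\bu$ terms cancel through the eigenvalue relations $a(\bu,\bu)=\lambda c(\bu,\bu)$ and $a(\bu,\bu_h)=\lambda c(\bu,\bu_h)$, which hold for any scaling of $\bu$, so the hypothesis $c(\bu,\bu)=1$ can simply be dropped; all that matters is that $\bu\in\mathfrak{E}$ is the element supplied by the gap estimate.
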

\begin{proof}
In order to simplify the presentation of the material, let us define 
$$\bx:=(\bu,p),\quad \by:=(\bv,q)\in\bV\times \Q, $$
and  the corresponding  discrete counterparts
$$\bx_h:=(\bu_h,p_h),\quad \by_h:=(\bv_h,q_h)\in\bV_{h}\times \Q_h,$$ 
where $\bx$ and $\bx_h$ are the solutions of the following spectral problems
\begin{equation*}
A(\bx,\by)=\lambda c(\bu,\bv)\,\,\,\,\,\,\forall\by\in\bV\times \Q,
\end{equation*}
and
\begin{equation*}
A_h(\bx_h,\by_h)=\lambda_h^{(i)} c_h(\bu_h,\bv_h)\,\,\,\,\,\,\,\forall\by_h\in\bV_{h}\times \Q_h, 
\end{equation*}
where $A(\cdot,\cdot), c(\cdot,\cdot), A_h(\cdot,\cdot)$ and $c_h(\cdot,\cdot)$ are as in \eqref{eq:formaA}, \eqref{eq:bilinear_forms}, \eqref{ec:forma_Ah} and 
\eqref{ec:ahch}, respectively. Let $\bu_h\in\mathfrak{E}_h$ be an eigenfunction corresponding to the eigenvalue $\lambda_h^{(i)}$ with $i=1,\ldots, m$ and $\|\bu_h\|_{1,\O} =1$. Since $\widehat{\delta}\left(\bu_h,\mathfrak{E}\right)\lesssim h^r$, there exists $\bu\in\mathfrak{E}$ such that
\begin{equation*}
\|\bu-\bu_h\|_{1,\O} +\|p-p_h\|_{0,\O}\lesssim h^{\min\{1,r\}}.
\end{equation*}
Since the bilinear forms are symmetric, the following identity holds
\begin{multline*}
|\lambda-\lambda_h| c(\bu_h,\bu_h)=\underbrace{\sum_{\E\in\CT_h} a_h^{\E}(\bu_h,\bu_h)-a^{\E}(\bu_h,\bu_h)}_{\textbf{I}}
+\underbrace{\lambda_h[c(\bu_h,\bu_h)-c_h(\bu_h,\bu_h) ]}_{\textbf{II}} \\
+\underbrace{A((\bu-\bu_h,p-p_h),(\bu-\bu_h,p-p_h))}_{\textbf{III}}-\underbrace{\lambda c(\bu-\bu_h,\bu-\bu_h)}_{\textbf{IV}}.
\end{multline*}
An application of consistency, stability, and polynomial approximation properties of $a_h^K(\cdot,\cdot)$ yield the control on each terms on the identity above. We observe that for term $\textbf{I}$, from the consistency,  we have
\begin{multline}
\label{eq:term_I}
|\textbf{I}|=\left|\sum_{\E\in\CT_h}(a_h^\E(\bu_h-\bu_\pi,\bu_h)-a^\E(\bu_h-\bu_\pi,\bu_h)\right|\\
=\left|\sum_{\E\in\CT_h}(a_h^\E(\bu_h-\bu_\pi,\bu_h-\bu_\pi)-a^\E(\bu_h-\bu_\pi,\bu_h-\bu_\pi))\right|\\
\lesssim \sum_{\E\in\CT_h}|\bu_h-\bu|_{1,\E}^2+\sum_{\E\in\CT_h}|\bu-\bu_\pi|_{1,\E}^2\lesssim h^{2\min\{1,r\}}.
\end{multline}
By exploiting the orthogonality property of $\bPi_0^\E$, we derive the bound for the term $\mathbf{II}$ as follows
\begin{multline}
\label{eq:term_II}
|\mathbf{II}|=\left|\int_{\E}|\bu_h|^2-\int_{\E}\bPi_0^\E\bu_h\cdot\bu_h \right|=\left|\int_\E(\bu_h-\bPi_0^\E\bu_h)\cdot(\bu_h-\bPi_0^\E\bu_h)\right|\\
\lesssim \|\bu_h-\bPi_0^\E\bu_h\|_{0,\E}^2\lesssim h^{2\min\{1,r\}}.
\end{multline}
On the other hand, from Theorem \ref{thm:error_spaces} it is asserted that 
\begin{equation}
\label{eq:term_III}
|\mathbf{III}|\lesssim \|\bu-\bu_h\|_{1,\O}^2+\|p-p_h\|_{0,\O}^2\lesssim h^{2\min\{1,r\}},
\end{equation}
whereas for $\mathbf{IV}$, we have
\begin{equation}
\label{eq:term_IV}
|\mathbf{IV}|\lesssim \|\bu-\bu_h\|_{0,\O}^2\lesssim h^{2\min\{1,r\}}.
\end{equation}
Finally we deduce that from the definition of $A_h(\cdot,\cdot)$, its stability property (cf. subsection \ref{subsec:disc_b_f}), the fact that $\|\bu_h\|_{1,\O}=1$,   and the convergence of eigenvalues   $\lambda_h^{(i)}\rightarrow \lambda$ as $h\rightarrow 0$, we have 
\begin{equation}
\label{eq:termV}
c(\bu_h,\bu_h)\geq\frac{A_h((\bu_h,p_h), (\bu_h,p_h))}{\lambda_h^{(i)}} \geq\frac{\|\bu_h\|_{1,\O}+\|p_h\|_{0,\O}}{\lambda_h^{(i)}}=\frac{1+\|p_h\|_{0,\O}}{\lambda_h^{(i)}}\geq \widetilde{C}>0.
\end{equation}
Hence, gathering \eqref{eq:term_I}, \eqref{eq:term_II}, \eqref{eq:term_III}, \eqref{eq:term_IV}, and \eqref{eq:termV}, we conclude the proof.
\end{proof}

\begin{remark}\label{reamrk:ord2}
From the above proof it is easy to note that
\begin{equation*}
\left|\l-\l_h^{(i)}\right| \lesssim \|\bu-\bu_h\|_{1,\O}^2+\|p-p_h\|_{0,\O}^2+\sum_{\E\in\CT_h}\left(|\bu-\bPi^\E\bu_h|_{1,\E}^2+\|\bu-\bPi_0^\E\bu_h\|_{0,\E}^2\right).
\end{equation*}
This estimate will be useful for the a posteriori error analysis.
\end{remark}
\noindent Now we present an error estimate for the velocity in $\L^2$ norm. The proof for this result is based on a classic duality argument.
\begin{lemma}\label{lmm:dualidad_t}
Under assumptions $\textbf{A1}$ and $\textbf{A2}$, given  $\bF\in\mathfrak{E}$ such that $\widehat{\bu}:=\bT\bF$ and $\widehat{\bu}_h:=\bT_h\bF$, there holds
\begin{equation*}
\|\widehat{\bu}-\widehat{\bu}_h\|_{0,\O}\lesssim h^s\left(\|\widehat{\bu}-\widehat{\bu}_h\|_{1,\O}+\|\widehat{\bu}-\bPi_0\widehat{\bu}\|_{0,\O}+|\widehat{\bu}-\bPi\widehat{\bu}_h|_{1,h}+\|\widehat{p}-\widehat{p}_h\|_{0,\O}\right),
\end{equation*}
where the hidden constant is independent of $h$.
\end{lemma}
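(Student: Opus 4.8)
The plan is to run an Aubin--Nitsche duality argument. First I would introduce the auxiliary (dual) Stokes problem with load $\be:=\widehat{\bu}-\widehat{\bu}_h$: find $(\bphi,\xi)\in\bV\times\Q$ such that $a(\bv,\bphi)+b(\bv,\xi)=c(\be,\bv)$ for all $\bv\in\bV$ and $b(\bphi,q)=0$ for all $q\in\Q$. Since the Stokes operator is selfadjoint this is again a Stokes problem with datum $\be$, so the additional regularity \eqref{eq:data_dependence_cont} gives $\bphi\in[\H^{1+s}(\O)]^2$, $\xi\in\H^s(\O)$ with $\|\bphi\|_{1+s,\O}+\|\xi\|_{s,\O}\lesssim\|\be\|_{0,\O}$, and $b(\bphi,q)=0$ for all $q$ forces $\div\bphi=0$. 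Testing with $\bv=\be$ yields $\|\be\|_{0,\O}^2=a(\be,\bphi)+b(\be,\xi)$. The key simplification is that the cross term vanishes: since $\div\bV_h=\Q_h$ and $b(\widehat{\bu}_h,q_h)=0$ for all $q_h\in\Q_h$, the choice $q_h=\div\widehat{\bu}_h\in\Q_h$ shows that $\widehat{\bu}_h$ is exactly divergence free; together with $\div\widehat{\bu}=0$ this gives $b(\be,\xi)=0$, whence $\|\be\|_{0,\O}^2=a(\be,\bphi)$.

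Next I would let $\bphi_I\in\bV_h$ be the interpolant of $\bphi$ from Lemma~\ref{estimate2} and split $a(\be,\bphi)=a(\be,\bphi-\bphi_I)+a(\be,\bphi_I)$. The first piece is handled directly: $|a(\be,\bphi-\bphi_I)|\lesssim\|\be\|_{1,\O}\,|\bphi-\bphi_I|_{1,\O}\lesssim h^s\|\bphi\|_{1+s,\O}\|\be\|_{1,\O}$, producing the contribution $h^s\|\widehat{\bu}-\widehat{\bu}_h\|_{1,\O}$. For the second piece, as $\bphi_I\in\bV_h\subset\bV$, I would insert the continuous source identity \eqref{eq:weak_stokes_source} and the discrete one \eqref{eq:weak_stokes_source_disc} tested against $\bphi_I$, writing $a(\be,\bphi_I)=[a(\widehat{\bu},\bphi_I)-a_h(\widehat{\bu}_h,\bphi_I)]+[a_h(\widehat{\bu}_h,\bphi_I)-a(\widehat{\bu}_h,\bphi_I)]$ and replacing the first bracket by $[c(\bF,\bphi_I)-c_h(\bF,\bphi_I)]-b(\bphi_I,\widehat{p}-\widehat{p}_h)$. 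This decomposes $a(\be,\bphi_I)$ into a consistency error of $c(\cdot,\cdot)$, a pressure term, and a consistency error of $a_h(\cdot,\cdot)$.

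Each piece matches one term on the right-hand side. For the consistency error of $c$, orthogonality of $\bPi_0^\E$ gives $c^\E(\bF,\bphi_I)-c_h^\E(\bF,\bphi_I)=(\bF-\bPi_0^\E\bF,\bphi_I-\bPi_0^\E\bphi_I)_\E$; using $\bF=\mu^{-1}\widehat{\bu}$ (valid since $\bF\in\mathfrak{E}$ and $\widehat{\bu}=\bT\bF$) and $\|\bphi_I-\bPi_0^\E\bphi_I\|_{0,\E}\lesssim h_\E^{1+s}\|\bphi\|_{1+s,\E}$, this contributes $h^s\|\widehat{\bu}-\bPi_0\widehat{\bu}\|_{0,\O}\|\be\|_{0,\O}$. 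For the pressure term, $\div\bphi=0$ gives $b(\bphi,\widehat{p}-\widehat{p}_h)=0$, so $b(\bphi_I,\widehat{p}-\widehat{p}_h)=b(\bphi_I-\bphi,\widehat{p}-\widehat{p}_h)\lesssim h^s\|\bphi\|_{1+s,\O}\|\widehat{p}-\widehat{p}_h\|_{0,\O}$, giving the term $h^s\|\widehat{p}-\widehat{p}_h\|_{0,\O}$. For the consistency error of $a_h$, I would subtract polynomials, taking $\bPi^\E\widehat{\bu}_h$ in the first slot and $\bphi_\pi$ (Lemma~\ref{lmm:bh}) in the second, and use the consistency and stability of $a_h^\E$ to obtain $|a_h^\E(\widehat{\bu}_h,\bphi_I)-a^\E(\widehat{\bu}_h,\bphi_I)|\lesssim(|\widehat{\bu}-\widehat{\bu}_h|_{1,\E}+|\widehat{\bu}-\bPi^\E\widehat{\bu}_h|_{1,\E})\,|\bphi_I-\bphi_\pi|_{1,\E}$; summing and using $|\bphi_I-\bphi_\pi|_{1,\E}\lesssim h_\E^s\|\bphi\|_{1+s,\E}$ yields $h^s(\|\widehat{\bu}-\widehat{\bu}_h\|_{1,\O}+|\widehat{\bu}-\bPi\widehat{\bu}_h|_{1,h})\|\be\|_{0,\O}$. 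Collecting the four contributions, using $\|\bphi\|_{1+s,\O}\lesssim\|\be\|_{0,\O}$ throughout, and dividing by $\|\be\|_{0,\O}=\|\widehat{\bu}-\widehat{\bu}_h\|_{0,\O}$ delivers the claim.

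The main obstacle is the bookkeeping of the two inconsistencies, that is, the gaps $a-a_h$ and $c-c_h$: one must insert precisely the right polynomials so that the consistency errors collapse to the projection quantities $|\widehat{\bu}-\bPi\widehat{\bu}_h|_{1,h}$ and $\|\widehat{\bu}-\bPi_0\widehat{\bu}\|_{0,\O}$ that appear on the right-hand side, rather than to generic best-approximation terms. The two supporting observations---that $\widehat{\bu}_h$ is genuinely divergence free so that $b(\be,\xi)$ drops out, and that the elevated regularity of the dual solution supplies the extra factor $h^s$ via $\|\bphi\|_{1+s,\O}\lesssim\|\be\|_{0,\O}$---are exactly what force every term to carry the gain $h^s$.
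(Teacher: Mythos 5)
Your proof is correct and follows essentially the same Aubin--Nitsche argument as the paper: the same dual Stokes problem with datum $\widehat{\bu}-\widehat{\bu}_h$ and regularity $\|\bphi\|_{1+s,\O}\lesssim\|\widehat{\bu}-\widehat{\bu}_h\|_{0,\O}$, the same splitting of $a(\widehat{\bu}-\widehat{\bu}_h,\bphi)$ into an interpolation error plus the three terms obtained from the continuous and discrete source problems (consistency of $c$ versus $c_h$, consistency of $a$ versus $a_h$, and the pressure term), each estimated exactly as in the paper via $\bPi_0$-orthogonality, polynomial insertion with consistency/stability of $a_h^\E$, and $\div\bphi=0$. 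The only (harmless, and valid) deviation is that you annihilate the cross term $b(\widehat{\bu}-\widehat{\bu}_h,\xi)$ outright by observing that $\div\bV_h=\Q_h$ forces $\div\widehat{\bu}_h=0$ exactly, whereas the paper splits this term with $\Ph(\phi)$ and bounds the remainder by $h^s|\widehat{\bu}-\widehat{\bu}_h|_{1,\O}\|\widehat{\bu}-\widehat{\bu}_h\|_{0,\O}$.
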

\begin{proof}
Let us consider the following well posed problem: find $(\boldsymbol{z},\phi)\in\bV\times\Q$ such that 
\begin{equation}\label{eq:weak_stokes_dual}
\begin{array}{rcll}
 a(\boldsymbol{\sigma},\boldsymbol{z})+b(\boldsymbol{\sigma},\phi) & = &c(\widehat{\bu}-\widehat{\bu}_h,\boldsymbol{\sigma}) &\forall\boldsymbol{\sigma}\in \bV,\\
\ds b(\boldsymbol{z},r) & = & 0 &\forall\ r\in \Q,
\end{array}
\end{equation}
where the solution of \eqref{eq:weak_stokes_dual} satisfies
\begin{equation}
\label{eq:data_dependence_cont_dual}
\|\boldsymbol{z}\|_{1+s,\O}+\|\phi\|_{s,\O}\lesssim \|\widehat{\bu}-\widehat{\bu}_h\|_{0,\O}.
\end{equation}
Now our task is to estimate the right hand side of the inequality above. Elementary computations reveal
\begin{multline}\label{eq:dualidad1}
\|\widehat{\bu}-\widehat{\bu}_h\|_{0,\O}^2= c(\widehat{\bu}-\widehat{\bu}_h,\widehat{\bu}-\widehat{\bu}_h)=a(\widehat{\bu}-\widehat{\bu}_h,\boldsymbol{z})+b(\widehat{\bu}-\widehat{\bu}_h,\phi)\\
=a(\widehat{\bu}-\widehat{\bu}_h,\boldsymbol{z}-\boldsymbol{z}_I)+a(\widehat{\bu}-\widehat{\bu}_h,\boldsymbol{z}_I)+b(\widehat{\bu}-\widehat{\bu}_h,\phi-\Ph(\phi))+b(\widehat{\bu}-\widehat{\bu}_h,\Ph(\phi)).
\end{multline}
Regarding the interpolation properties of $\boldsymbol{z}_I$, Cauchy-Schwarz inequality, and \eqref{eq:data_dependence_cont_dual}, we obtain the following estimates
\begin{align*}
a(\widehat{\bu}-\widehat{\bu}_h,\boldsymbol{z}-\boldsymbol{z}_I)\lesssim h^s|\widehat{\bu}-\widehat{\bu}_h|_{1,\O}\|\widehat{\bu}-\widehat{\bu}_h\|_{0,\O},\\
b(\widehat{\bu}-\widehat{\bu}_h,\phi-\Ph(\phi))\lesssim h^s|\widehat{\bu}-\widehat{\bu}_h|_{1,\O}\|\widehat{\bu}-\widehat{\bu}_h\|_{0,\O}.
\end{align*}
Moreover, the identity $b(\widehat{\bu}-\widehat{\bu}_h,\Ph(\phi))=0$,  is a consequence of the second equation of \eqref{eq:weak_stokes_source} and \eqref{eq:weak_stokes_source_disc}. 
On the other hand, 
\begin{equation*}
a(\widehat{\bu}-\widehat{\bu}_h,\boldsymbol{z}_I)=\underbrace{c(\bF,\boldsymbol{z}_I)-c_h(\bF,\boldsymbol{z}_I)}_{\Lambda_1}+\underbrace{a_h(\widehat{\bu}_h,\boldsymbol{z}_I)-a(\widehat{\bu},\boldsymbol{z}_I)}_{\Lambda_2}+\underbrace{b(\boldsymbol{z}_I-\boldsymbol{z},\widehat{p}_h-\widehat{p})}_{\Lambda_3}.
\end{equation*}
Then, for $\Lambda_1$, from the stability of $\bPi_0^\E$, the error  interpolation properties of $\boldsymbol{z}_I$ and \eqref{eq:data_dependence_cont_dual}, we have
\begin{multline*}
\Lambda_1=\sum_{\E\in\CT_h}c^\E(\bF,\boldsymbol{z}_I)-c_h^\E(\bF,\boldsymbol{z}_I)=\sum_{\E\in\CT_h}c^\E(\bF-\bPi_0^\E\bF,\boldsymbol{z}_I-\bPi_0^\E\boldsymbol{z}_I)\\
\leq\sum_{\E\in\CT_h}\|\bF-\bPi_0^\E\bF\|_{0,\E}\|\boldsymbol{z}_I-\bPi_0^\E\boldsymbol{z}_I\|_{0,\E}\\
\leq\sum_{\E\in\CT_h}\|\bF-\bPi_0^\E\bF\|_{0,\E}(\|\boldsymbol{z}_I-\boldsymbol{z}\|_{0,\E}+\|\boldsymbol{z}-\bPi_0^\E\boldsymbol{z}\|_{0,\E}+\|\bPi_0^\E(\boldsymbol{z}-\boldsymbol{z}_I)\|_{0,\E})\\
\lesssim h^s\left(\sum_{\E\in\CT_h}\|\bF-\bPi_0^\E\bF\|_{0,\E}^2\right)^{1/2}\|\widehat{\bu}-\widehat{\bu}_h\|_{0,\O}.
\end{multline*}
Now, use the fact that $\bF\in \mathfrak{E}$, $\widehat{\bu}=\bT\bF=\mu\bF$ to obtain
\begin{multline}
\label{eq:dualidad2}
\Lambda_1\lesssim h^s\left(|\mu^{-1}|\sum_{\E\in\CT_h}\|\widehat{\bu}-\bPi_0^\E\widehat{\bu}\|_{0,\E}^2\right)^{1/2}\|\widehat{\bu}-\widehat{\bu}_h\|_{0,\O}
\lesssim h^r\|\widehat{\bu}-\bPi_0^\E\widehat{\bu}\|_{0,\O}\|\widehat{\bu}-\widehat{\bu}_h\|_{0,\O}.
\end{multline}
For $\Lambda_2$, there holds
\begin{multline}\label{eq:dualidad3}
\Lambda_2=\sum_{\E\in\CT_h}a_h^\E(\widehat{\bu}_h,\boldsymbol{z}_I)-a^\E(\widehat{\bu},\boldsymbol{z}_I)\\
=\sum_{\E\in\CT_h}
a_h^\E(\widehat{\bu}_h-\bPi^\E\widehat{\bu}_h,\boldsymbol{z}_I)+a_h(\bPi^\E\widehat{\bu}_h,\boldsymbol{z}_I)-a^\E(\widehat{\bu}-\bPi^\E\widehat{\bu}_h,\boldsymbol{z}_I)-a^\E(\bPi^\E\widehat{\bu}_h,\boldsymbol{z}_I)\\
=\sum_{\E\in\CT_h} a_h^\E(\widehat{\bu}_h-\bPi^\E\widehat{\bu}_h,\boldsymbol{z}_I-\bPi^\E\boldsymbol{z}_I)-a^\E(\widehat{\bu}_h-\bPi^\E\widehat{\bu}_h,\boldsymbol{z}_I-\bPi\boldsymbol{z}_I)\\
\lesssim\sum_{\E\in\CT_h}|\widehat{\bu}_h-\bPi^\E\widehat{\bu}_h|_{1,\E}|\boldsymbol{z}_I-\bPi^\E\boldsymbol{z}_I|_{1,\E}\lesssim\sum_{\E\in\CT_h}|\widehat{\bu}_h-\bPi^\E\widehat{\bu}_h|_{1,\E}h_\E^s|z|_{1,\E}\\
\lesssim h^s(|\widehat{\bu}_h-\widehat{\bu}|_{1,\O}+|\widehat{\bu}-\bPi\widehat{\bu}_h|_{1,h})\|\widehat{\bu}-\widehat{\bu}_h\|_{0,\O}.
\end{multline}
Now, $\Lambda_3$ is easily estimated by 
\begin{equation*}
\Lambda_3\lesssim \|\boldsymbol{z}_I-\boldsymbol{z}\|_{1,\O}\|\widehat{p}-\widehat{p}_h\|_{0,\O}\lesssim h^s\|\boldsymbol{z}\|_{1+s,\O}\|\widehat{p}-\widehat{p}_h\|_{0,\O}
\lesssim h^s\|\widehat{p}-\widehat{p}_h\|_{0,\O}\|\widehat{\bu}-\widehat{\bu}_h\|_{0,\O}.
\end{equation*}
Finally, combining the above estimate with \eqref{eq:dualidad1}-\eqref{eq:dualidad3} allows us to conclude the proof.
\end{proof}

\noindent We are now in a position to define a solution operator on the space $[\L^2(\O)]^2$:
$$\widetilde{\bT}:[\L^2(\O)]^2\to [\L^2(\O)]^2,\qquad \widetilde{\bF}\longmapsto \widetilde{\bT}\widetilde{\bF}:=\widetilde{\bu},$$
where the pair $(\widetilde{\bu},\widetilde{p})$ is the solution of \eqref{eq:weak_stokes_source}. It is easy to check that the operator $\widetilde{\bT}$ is self-adjoint and compact. Moreover, the spectra of $\widetilde{\bT}$ and $\bT$ coincide.
\begin{remark}\label{rm:L2}
Since the spectra of $\widetilde{\bT}$ and $\bT$ coincide, as a consequence of Lemma  \ref{lmm:converg1}, we have that for all $\bF\in[\L^2(\O)]^2$, the following estimate hold
\begin{equation*}
\|(\widetilde{\bT}-\bT_h)\bF\|_{0,\O}\lesssim h^s\|\bF\|_{0,\O},
\end{equation*}
where the hidden constant is independent of $h$.
\end{remark}

\begin{lemma}
Let $\bu_h$ be an eigenfunction of $\bT_h$ associated with eigenvalues $\mu_h^i$, $1\leq i\leq m$, with $\|\bu_h\|_{0,\O}=1$. Then there exists  an eigenfunction $\bu\in[\L^2(\O)]^2$ of $\bT$ associated with $\mu$ such that
\begin{equation*}
\|\bu-\bu_h\|_{0,\O}\lesssim h^s\left(\|\widehat{\bu}-\widehat{\bu}_h\|_{1,\O}+\|\widehat{\bu}-\bPi_0\widehat{\bu}\|_{0,\O}+|\widehat{\bu}-\bPi\widehat{\bu}_h|_{1,h}+\|\widehat{p}-\widehat{p}_h\|_{0,\O}\right).
\end{equation*}
\end{lemma}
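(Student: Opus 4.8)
The plan is to invoke the spectral projection machinery of Babu\v{s}ka--Osborn \cite{BO} on $[\L^2(\O)]^2$ through the operator $\widetilde{\bT}$, and then reduce the $\L^2$ eigenfunction error to the solution-operator error already controlled in Lemma~\ref{lmm:dualidad_t}. Let $\mathbf{E}:[\L^2(\O)]^2\to\mathfrak{E}$ denote the spectral projector of the compact self-adjoint operator $\widetilde{\bT}$ associated with the isolated eigenvalue $\mu$, and define $\bu:=\mathbf{E}\bu_h\in\mathfrak{E}$. Writing $\bu_h=\bu+\bw$ with $\bw:=(I-\mathbf{E})\bu_h$, we have $\|\bu-\bu_h\|_{0,\O}=\|\bw\|_{0,\O}$; since $\|\bw\|_{0,\O}\to0$ while $\|\bu_h\|_{0,\O}=1$, the element $\bu$ is nonzero for $h$ small and is therefore a genuine eigenfunction of $\bT$ associated with $\mu$.

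First I would establish the resolvent bound $\|\bw\|_{0,\O}\lesssim\|(\widetilde{\bT}-\bT_h)\bu_h\|_{0,\O}$. Using that $\mathbf{E}$ commutes with $\widetilde{\bT}$ and that $\bT_h\bu_h=\mu_h^{(i)}\bu_h$, a short computation gives the identity
$$(\widetilde{\bT}-\mu_h^{(i)})\bw=(I-\mathbf{E})(\widetilde{\bT}-\bT_h)\bu_h.$$
On $\mathrm{Range}(I-\mathbf{E})$ the operator $\widetilde{\bT}$ has spectrum $\sp(\widetilde{\bT})\setminus\{\mu\}$, which by isolation of $\mu$ is separated from $\mu$ by some fixed $\rho>0$; since $\mu_h^{(i)}\to\mu$ by Theorem~\ref{thm:error_spaces}, for $h$ small the operator $\widetilde{\bT}-\mu_h^{(i)}$ is boundedly invertible on $\mathrm{Range}(I-\mathbf{E})$, with inverse bounded by $2/\rho$ uniformly in $h$. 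Inverting the identity then yields the claimed bound.

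Next I would split $\|(\widetilde{\bT}-\bT_h)\bu_h\|_{0,\O}\le\|(\widetilde{\bT}-\bT_h)(\bu_h-\bu)\|_{0,\O}+\|(\widetilde{\bT}-\bT_h)\bu\|_{0,\O}$. The first term is controlled by Remark~\ref{rm:L2} as $\lesssim h^s\|\bu_h-\bu\|_{0,\O}=h^s\|\bw\|_{0,\O}$, a higher-order quantity that I would absorb into the left-hand side for $h$ sufficiently small. For the second term I would apply Lemma~\ref{lmm:dualidad_t} with $\bF=\bu\in\mathfrak{E}$, for which $\widehat{\bu}=\bT\bu=\mu\bu$ and $\widehat{\bu}_h=\bT_h\bu$, so that $\|(\widetilde{\bT}-\bT_h)\bu\|_{0,\O}=\|\widehat{\bu}-\widehat{\bu}_h\|_{0,\O}\lesssim h^s(\|\widehat{\bu}-\widehat{\bu}_h\|_{1,\O}+\|\widehat{\bu}-\bPi_0\widehat{\bu}\|_{0,\O}+|\widehat{\bu}-\bPi\widehat{\bu}_h|_{1,h}+\|\widehat{p}-\widehat{p}_h\|_{0,\O})$. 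Chaining the three estimates and absorbing the $h^s\|\bw\|_{0,\O}$ term concludes the proof.

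The main obstacle I anticipate is the uniform resolvent estimate of the second step: making rigorous that $\mu_h^{(i)}$ stays uniformly bounded away from $\sp(\widetilde{\bT})\setminus\{\mu\}$, so that $(\widetilde{\bT}-\mu_h^{(i)})^{-1}$ on $\mathrm{Range}(I-\mathbf{E})$ is bounded independently of $h$. This rests on the isolation of $\mu$, the convergence $\mu_h^{(i)}\to\mu$ already furnished by Theorem~\ref{thm:error_spaces}, and the self-adjointness and compactness of $\widetilde{\bT}$; once this uniform bound is secured, the remainder is a routine triangle inequality together with the absorption argument.
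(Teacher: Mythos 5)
Your proof is correct, and it rests on the same two pillars as the paper's own argument --- the $\L^2$ solution operator $\widetilde{\bT}$ with its norm convergence (Remark~\ref{rm:L2}) and the duality estimate of Lemma~\ref{lmm:dualidad_t} --- but the middle of the argument is genuinely different. The paper invokes the abstract spectral approximation theory of \cite{BO} as a black box to assert $\|\bu-\bu_h\|_{0,\O}\lesssim\sup\{\|(\widetilde{\bT}-\bT_h)\widetilde{\bF}\|_{0,\O}:\widetilde{\bF}\in\mathfrak{E}_h,\ \|\widetilde{\bF}\|_{0,\O}=1\}$, and then applies Lemma~\ref{lmm:dualidad_t} to elements of the \emph{discrete} eigenspace through the identification ``$\widetilde{\bF}=\bF$''; this step is loose, since Lemma~\ref{lmm:dualidad_t} is stated for $\bF\in\mathfrak{E}$, and a discrete eigenfunction does not in general belong to $\mathfrak{E}$. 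You instead construct the continuous eigenfunction explicitly as $\bu=\mathbf{E}\bu_h$ with the spectral projector of $\widetilde{\bT}$, prove the resolvent bound $\|(I-\mathbf{E})\bu_h\|_{0,\O}\lesssim\|(\widetilde{\bT}-\bT_h)\bu_h\|_{0,\O}$ directly from the identity $(\widetilde{\bT}-\mu_h^{(i)})(I-\mathbf{E})\bu_h=(I-\mathbf{E})(\widetilde{\bT}-\bT_h)\bu_h$ --- which is essentially the proof of the cited \cite{BO} estimate, and all of its ingredients (self-adjointness and compactness of $\widetilde{\bT}$, isolation of $\mu$, the convergence $\mu_h^{(i)}\to\mu$ from Theorem~\ref{thm:error_spaces}) are indeed available --- and then split $(\widetilde{\bT}-\bT_h)\bu_h$ so that the duality lemma is applied only to $\bu\in\mathfrak{E}$, squarely within its hypotheses, with the remainder $h^s\|\bu-\bu_h\|_{0,\O}$ absorbed into the left-hand side for $h$ small. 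What your route buys is a self-contained and more rigorous treatment of the passage from the discrete eigenfunction to the continuous one; what the paper's route buys is brevity, delegating the projector and resolvent work to the reference. Your uniform resolvent step, the point you flag as the main obstacle, is sound as sketched: on the invariant subspace $\mathrm{Range}(I-\mathbf{E})$ the restricted operator is self-adjoint with spectrum $\sp(\widetilde{\bT})\setminus\{\mu\}$, so its resolvent norm at $\mu_h^{(i)}$ equals the reciprocal of the distance from $\mu_h^{(i)}$ to that set, which stays bounded below by half the isolation distance once $h$ is small.
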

\begin{proof} The spectral convergence of $\bT_h$ at $\widetilde{\bT}$, is a consequence of \cite{BO} and  Remark \ref{rm:L2}. In particular, because of the relation between the eigenfunctions of $\bT$ and $\bT_h$ with those of $\widetilde{\bT}$ and $\bT_h$, respectively, we have $\bu_h\in \mathfrak{E}_h$ and there exists $\bu\in \mathfrak{E}$ such that
\begin{equation*}
\|\bu-\bu_h\|_{0,\O}\lesssim \displaystyle\sup_{\widetilde{\bF}\in \mathfrak{E}_h:\|\widetilde{\bF}\|_{0,\O}=1}\|(\widetilde{\bT}-\bT_h)\widetilde{\bF}\|_{0,\O}.
\end{equation*}
On the other hand, from Lemma \ref{lmm:dualidad_t}, if $\widetilde{\bF}\in\mathfrak{E}_h$ is such that $\widetilde{\bF}=\bF$, then
\begin{multline*}
\|(\widetilde{\bT}-\bT_h)\widetilde{\bF}\|_{0,\O}=\|(\bT-\bT_h)\bF\|_{0,\O}		
\\
\lesssim h^s\left(\|\widehat{\bu}-\widehat{\bu}_h\|_{1,\O}+\|\widehat{\bu}-\bPi_0\widehat{\bu}\|_{0,\O}+|\widehat{\bu}-\bPi\widehat{\bu}_h|_{1,h}+\|\widehat{p}-\widehat{p}_h\|_{0,\O}\right),
\end{multline*}
which conclude the proof.
\end{proof}
\noindent Now we are in a position to prove the following result, which will be crucial in order to derive the reliability of the residual estimator. 
\begin{theorem}\label{The:L2up}
There exists $s>0$ such that
\begin{equation*}
\|\bu-\bu_h\|_{0,\O}\lesssim h^s\left(\|\bu-\bu_h\|_{1,\O}+\|p-p_h\|_{0,\O}+\|\bu-\bPi_0\bu_h\|_{0,\O}+|\bu-\bPi\bu_h|_{1,h}\right).
\end{equation*}
\end{theorem}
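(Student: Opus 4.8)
The plan is to start from the preceding lemma, which already bounds $\|\bu-\bu_h\|_{0,\O}$ by $h^{s}$ times the four \emph{source-type} quantities $\|\widehat\bu-\widehat\bu_h\|_{1,\O}$, $\|\widehat\bu-\bPi_0\widehat\bu\|_{0,\O}$, $|\widehat\bu-\bPi\widehat\bu_h|_{1,h}$ and $\|\widehat p-\widehat p_h\|_{0,\O}$, where these correspond to the datum $\bF=\bu_h$ (the discrete eigenfunction normalised by $\|\bu_h\|_{0,\O}=1$), so that $\widehat\bu=\bT\bu_h$, $\widehat\bu_h=\bT_h\bu_h=\mu_h\bu_h$ and $\widehat p_h=\mu_h p_h$. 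The entire task is then to replace these hatted quantities by the spectral quantities $\|\bu-\bu_h\|_{1,\O}$, $\|p-p_h\|_{0,\O}$, $\|\bu-\bPi_0\bu_h\|_{0,\O}$ and $|\bu-\bPi\bu_h|_{1,h}$ appearing in the residual estimator. The identities driving the conversion are $\widehat\bu=\bT\bu_h=\mu\bu+\bT(\bu_h-\bu)$ and its pressure analogue $\widehat p=\mu p+(\widehat p-\mu p)$; both follow from the linearity of the source problem and $\bT\bu=\mu\bu$, and by the data dependence \eqref{eq:data_dependence_cont} they yield $\|\bT(\bu_h-\bu)\|_{1,\O}+\|\widehat p-\mu p\|_{0,\O}\lesssim\|\bu-\bu_h\|_{0,\O}$. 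The crucial observation is that $\widehat\bu$ is close to $\mu\bu$ (not to $\bu$), so I insert $\mu\bu$, $\mu p$, $\mu\bPi_0\bu$ and $\mu\bPi\bu_h$ as intermediate quantities.

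Concretely, first I would dispatch each term by the triangle inequality. For $\|\widehat\bu-\mu_h\bu_h\|_{1,\O}$ I split $\widehat\bu-\mu_h\bu_h=\mu(\bu-\bu_h)+(\mu-\mu_h)\bu_h+\bT(\bu_h-\bu)$ and bound the summands by $\|\bu-\bu_h\|_{1,\O}$, $|\mu-\mu_h|\,\|\bu_h\|_{1,\O}$ and $\|\bu-\bu_h\|_{0,\O}$; here $\|\bu_h\|_{1,\O}\lesssim1$ follows from Lemma~\ref{lm:ah_elliptic} and the normalisation. For $\|\widehat\bu-\bPi_0\widehat\bu\|_{0,\O}$ I write $\widehat\bu-\bPi_0\widehat\bu=\mu(\bu-\bPi_0\bu)+(I-\bPi_0)(\widehat\bu-\mu\bu)$ and then $\bu-\bPi_0\bu=(\bu-\bPi_0\bu_h)-\bPi_0(\bu-\bu_h)$, using the $\L^2$-stability of $\bPi_0$ to reduce the remainders to $\|\bu-\bu_h\|_{0,\O}$ and $\|\widehat\bu-\mu\bu\|_{0,\O}$. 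The mixed term $|\widehat\bu-\mu_h\bPi\bu_h|_{1,h}$ is handled in the same manner by decomposing $\mu\bu-\mu_h\bPi\bu_h=\mu(\bu-\bPi\bu_h)+(\mu-\mu_h)\bPi\bu_h$ and invoking the $\H^1$-boundedness of $\bPi$, while $\|\widehat p-\mu_h p_h\|_{0,\O}$ is reduced to $\|p-p_h\|_{0,\O}$ through $\mu p-\mu_h p_h=\mu(p-p_h)+(\mu-\mu_h)p_h$, with $\|p_h\|_{0,\O}\lesssim1$ from the discrete inf-sup condition of Lemma~\ref{lmm:disc_inf_sup}.

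After this bookkeeping the right-hand side is controlled by the four target quantities together with two families of remainders: contributions proportional to $\|\bu-\bu_h\|_{0,\O}$ (from $\bT(\bu_h-\bu)$, $\widehat p-\mu p$ and the projector remainders), and contributions proportional to the eigenvalue gap $|\mu-\mu_h|$. The first family is harmless because of the overall prefactor $h^{s}$: for $h$ small enough the term $h^{s}\|\bu-\bu_h\|_{0,\O}$ is absorbed into the left-hand side. The second family I would close with the double-order estimate of Remark~\ref{reamrk:ord2}, namely $|\mu-\mu_h|\lesssim|\lambda-\lambda_h|\lesssim\|\bu-\bu_h\|_{1,\O}^{2}+\|p-p_h\|_{0,\O}^{2}+|\bu-\bPi\bu_h|_{1,h}^{2}+\|\bu-\bPi_0\bu_h\|_{0,\O}^{2}$, and since each factor is uniformly bounded the squares are dominated by the corresponding linear quantities. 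The main obstacle is exactly this closing step: one must verify that \emph{every} remainder produced by the conversion belongs to one of these two admissible types, so that after the absorption and the use of Remark~\ref{reamrk:ord2} only $h^{s}$ times the four residual quantities survives, which is the claim.
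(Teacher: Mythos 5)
Your proposal is correct, and it is structurally parallel to the paper's proof, but it takes the mirror-image route. The paper chooses the source datum to be the \emph{continuous} eigenfunction $\bu$, so that the continuous hatted solution is exact, $\widehat{\bu}=\bu/\lambda$ and $\widehat{p}=p/\lambda$, and then compares $\widehat{\bu}_h=\bT_h\bu$ with $\bu_h/\lambda_h$ through the well-posedness of the discrete problem, via $A_h((\widehat{\bu}_h-\bu_h/\lambda_h,\widehat{p}_h-p_h/\lambda_h),(\bv_h,q_h))=c_h(\bu-\bu_h,\bv_h)$, which produces a remainder $\lesssim\|\bu-\bu_h\|_{0,\O}$. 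You instead choose the datum to be the \emph{discrete} eigenfunction $\bu_h$, so that the discrete side is exact, $\bT_h\bu_h=\mu_h\bu_h$ and $\widehat{p}_h=\mu_h p_h$, and you perturb the continuous side through $\bT\bu_h=\mu\bu+\bT(\bu_h-\bu)$ together with the continuous stability \eqref{eq:data_dependence_cont}; incidentally, your reading of the preceding lemma (datum in $\mathfrak{E}_h$) is the one consistent with how that lemma is actually proved, whereas the paper's proof of the theorem silently re-interprets the datum as $\bu$. After this choice, the two arguments coincide in substance: triangle inequalities convert the four hatted quantities into the four target quantities plus remainders proportional to $\|\bu-\bu_h\|_{0,\O}$ and to the eigenvalue gap, and both proofs close the gap terms with Remark \ref{reamrk:ord2} combined with uniform boundedness of the factors. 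Two small observations: the paper dispatches the $\|\bu-\bu_h\|_{0,\O}$ remainders simply by $\|\bu-\bu_h\|_{0,\O}\le\|\bu-\bu_h\|_{1,\O}$, which avoids your absorption step and hence any implicit smallness restriction on $h$ (your absorption is nevertheless legitimate, since for $h$ bounded away from zero the claimed estimate is trivial); and your explicit bounds $\|\bu_h\|_{1,\O}\lesssim1$ from Lemma \ref{lm:ah_elliptic} and $\|p_h\|_{0,\O}\lesssim1$ from Lemma \ref{lmm:disc_inf_sup} play exactly the role of the factor $\|\bu_h\|_{1,\O}+\|p_h\|_{0,\O}$ that the paper leaves explicit before invoking Remark \ref{reamrk:ord2}.
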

\begin{proof}

Let us notice that
\begin{equation}\label{eq:aux0}
\|\bu-\bu_h\|_{0,\O}\lesssim h^s\left(\|\widehat{\bu}-\widehat{\bu}_h\|_{1,\O}+\|\widehat{p}-\widehat{p}_h\|_{0,\O}+\|\widehat{\bu}-\bPi_0\widehat{\bu}\|_{0,\O}+|\widehat{\bu}-\bPi\widehat{\bu}_h|_{1,h}\right).
\end{equation}
Our aim is to control each term that appears on the right-hand side of the above estimate. Let  $(\widehat{\bu},\widehat{p})\in\bV\times\Q$ be the solution of the following problem:
\begin{equation*}
\label{eq:aux2}
A((\widehat{\bu},\widehat{p}),(\bv,q))=c(\bu,\bv)\quad\forall (\bv,q)\in\bV\times\Q.
\end{equation*}
and  recalling that  $(\bu,p)\in\bV\times\Q$ satisfies
\begin{equation*}
\label{eq:aux1}
A((\bu,p),(\bv,q))=\lambda c(\bu,\bv)\quad\forall (\bv,q)\in\bV\times\Q,
\end{equation*}
Then, from these two problems, it is clear that $\widehat{\bu}=\bu/\lambda$ and $\widehat{p}=p/\lambda$. Now, let  $(\widehat{\bu}_h,\widehat{p}_h)\in\bV_h\times\Q_h$ be the  unique solution of the discrete problems, given by  
\begin{equation*}
\label{eq:aux2h}
A_h((\widehat{\bu}_h,\widehat{p}_h),(\bv_h,q_h))=c_h(\bu,\bv_h)\quad\forall (\bv_h,q_h)\in\bV_h\times\Q_h.
\end{equation*}
and remembering now, that  $(\bu_h,p_h)\in\bV_h\times\Q_h$ satisfies
\begin{equation*}
\label{eq:aux1h}
A_h((\bu_h,p_h),(\bv_h,q_h))=\lambda_h c_h(\bu_h,\bv_h)\quad\forall (\bv_h,q_h)\in\bV_h\times\Q_h,
\end{equation*}
Observe that the problem 
\begin{equation*}
A_h((\widehat{\bu}_h-\bu_h/\lambda_h,\widehat{p}_h-p_h/\lambda_h),(\bv_h,q_h))= c_h(\bu-\bu_h,\bv_h),
\end{equation*}
is well posed and its solution satisfies
\begin{equation*}
\left\|\widehat{\bu}_h-\frac{\bu_h}{\lambda_h} \right\|_{1,\O}+\left\|\widehat{p}_h-\frac{p_h}{\lambda_h} \right\|_{0,\O}\lesssim \|\bu-\bu_h\|_{0,\O},
\end{equation*}
where the hidden constant is independent of $h$.
Further, a straightforward derivation yields that 
\begin{multline*}
\|\widehat{\bu}-\widehat{\bu}_h\|_{1,\O}=\left\|\frac{\bu}{\lambda}-\widehat{\bu}_h\right\|_{1,\O}\leq \left\|\frac{\bu}{\lambda}-\frac{\bu_h}{\lambda} \right\|_{1,\O}
+\left\|\frac{\bu_h}{\lambda}-\frac{\bu_h}{\lambda_h} \right\|_{1,\O}+\left\|\frac{\bu_h}{\lambda_h}-\widehat{\bu}_h\right\|_{1,\O}\\
=\frac{\|\bu-\bu_h\|_{1,\O}}{\lambda}+\frac{|\lambda_h-\lambda|}{\lambda\lambda_h}\|\bu_h\|_{1,\O}+\left\|\frac{\bu_h}{\lambda_h}-\widehat{\bu}_h\right\|_{1,\O}\lesssim \|\bu-\bu_h\|_{1,\O}+|\lambda_h-\lambda|\|\bu_h\|_{1,\O}.
\end{multline*}
Proceeding in a similar way, we obtain
\begin{equation*}
\|\widehat{p}-\widehat{p}_h\|_{0,\O}\lesssim \|\bu-\bu_h\|_{1,\O}+\|p-p_h\|_{0,\O}+|\lambda_h-\lambda|\|p_h\|_{0,\O}.
\end{equation*}
Thus, we have obtained a control of the first two terms of \eqref{eq:aux0}. For the third term we use  
\begin{multline*}
\|\widehat{\bu}-\bPi_0\widehat{\bu}\|_{0,\O}\lesssim \|\widehat{\bu}-\widehat{\bu}_h\|_{0,\O}+\|\widehat{\bu}_h-\bPi_0\widehat{\bu}_h\|_{0,\O}\\\lesssim \|\widehat{\bu}-\widehat{\bu}_h\|_{1,\O}+\left\|\widehat{\bu}-\dfrac{\bu_h}{\lambda_h}\right\|_{1,\O}+\|\bu-\bPi_0\bu_h\|_{0,\O}\\
\lesssim \|\bu-\bu_h\|_{1,\O}+|\lambda_h-\lambda|\|\bu_h\|_{1,\O}+\|\bu-\bPi_0\bu_h\|_{0,\O}.
\end{multline*}
For the last term on the right hand side of \eqref{eq:aux0}, we proceed in a similar way to the previous case to obtain
\begin{equation*}
|\widehat{\bu}-\bPi\widehat{\bu}|_{1,\O}\lesssim \|\bu-\bu_h\|_{1,\O}+|\lambda_h-\lambda|\|\bu_h\|_{1,\O}+\|\bu-\bPi_0\bu_h\|_{0,\O}|\bu-\bPi\bu_h|_{1,\O}.
\end{equation*}
Replacing all the estimates obtained in \eqref{eq:aux0}, we have
\begin{multline*}
\|\bu-\bu_h\|_{0,\O}\lesssim h^s\left(\|\bu-\bu_h\|_{1,\O}+\|p-p_h\|_{0,\O}+|\lambda_h-\lambda|(\|\bu_h\|_{1,\O}+\|p_h\|_{0,\O})\right.\\
\left.+\|\bu-\bPi_0\bu_h\|_{0,\O}+|\bu-\bPi\bu_h|_{1,\O}\right).
\end{multline*}
The proof is concluded from the above estimate and Remark \ref{reamrk:ord2}.
\end{proof}
\noindent An important consequence of the spurious free feature of the proposed virtual  element method is that, for $h$ small enough, except 
for $\lambda_h$, the rest of the eigenvalues of \eqref{eq:weak_stokes_source_disc}are well separated from $\lambda$ (see  \cite{MR3647956}).
\begin{prop}
\label{separa_eig}
Let us enumerate the eigenvalues of problem \eqref{eq:weak_stokes_source_disc} in increasing order as follows: $0<\lambda_1\leq\cdots\lambda_i\leq\cdots$ and 
$0<\lambda_{h,1}\leq\cdots\lambda_{h,i}\leq\cdots$. Let us assume  that $\lambda_{\mathcal{J}}$ is a simple eigenvalue of \eqref{eq:weak_stokes_source_disc}. Then, there exists $h_0>0$ such that
\begin{equation*}
|\lambda_{\mathcal{J}}-\lambda_{h,i}|\geq\frac{1}{2}\min_{j\neq \mathcal{J}}|\lambda_j-\lambda_{\mathcal{J}}|\quad\forall i\leq \dim\bV_h,\,\,i\neq \mathcal{J},\quad \forall h<h_0.
\end{equation*}
\end{prop}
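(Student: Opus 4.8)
The plan is to establish the separation property as a direct consequence of the convergence of the discrete eigenvalues together with the fact that the spectrum is free of spurious modes. The key observation is that Proposition~\ref{separa_eig} is essentially a quantitative statement that, for $h$ small, exactly one discrete eigenvalue approaches $\lambda_{\mathcal{J}}$, while all the others stay a fixed distance away. First I would fix the gap of $\lambda_{\mathcal{J}}$ to the rest of the continuous spectrum by setting $G:=\min_{j\neq \mathcal{J}}|\lambda_j-\lambda_{\mathcal{J}}|>0$, which is strictly positive precisely because $\lambda_{\mathcal{J}}$ is simple (hence isolated, by Theorem~\ref{thrm:spec_char_T}, since the eigenvalues accumulate only at $\lambda=+\infty$, i.e.\ the reciprocals $\mu_k$ accumulate only at $0$).

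The main step is to invoke the spectral convergence established in Theorem~\ref{thm:error_spaces}, which guarantees that $\lambda_{h,i}\to\lambda_i$ as $h\to0$ for each fixed index, together with the spurious-free property coming from the norm convergence $\|\bT-\bT_h\|\to0$ in Lemma~\ref{lmm:converg1}. Because of this convergence there exists $h_0>0$ such that for $h<h_0$ the eigenvalue approximating $\lambda_{\mathcal{J}}$, say $\lambda_{h,\mathcal{J}}$, satisfies $|\lambda_{\mathcal{J}}-\lambda_{h,\mathcal{J}}|<G/2$, and moreover every other discrete eigenvalue $\lambda_{h,i}$ with $i\neq\mathcal{J}$ lies within $G/4$ of the corresponding continuous eigenvalue $\lambda_i$ with $i\neq\mathcal{J}$. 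I would then argue by the triangle inequality: for $i\neq\mathcal{J}$,
\begin{equation*}
|\lambda_{\mathcal{J}}-\lambda_{h,i}|\geq |\lambda_{\mathcal{J}}-\lambda_i|-|\lambda_i-\lambda_{h,i}|\geq G-\frac{G}{4}\geq\frac{G}{2}=\frac{1}{2}\min_{j\neq\mathcal{J}}|\lambda_j-\lambda_{\mathcal{J}}|,
\end{equation*}
which is exactly the claimed bound. The role of the spurious-free property is to ensure that no extra discrete eigenvalue can drift into the gap: every $\lambda_{h,i}$ is genuinely close to some continuous $\lambda_j$, so the crude triangle-inequality estimate above is legitimate for all admissible indices $i\leq\dim\bV_h$.

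The main obstacle, and the point that needs care, is the uniformity of the estimate across all indices $i\leq \dim\bV_h$ rather than for a single fixed $i$. Theorem~\ref{thm:error_spaces} gives convergence for each isolated part of the spectrum individually, but the statement here quantifies over all discrete eigenvalues simultaneously. The clean way to handle this is to rely on the norm convergence $\|(\bT-\bT_h)\|\to0$ from Lemma~\ref{lmm:converg1}: by the classical theory of \cite{BO,MR1335452}, norm convergence of the resolvents implies that no part of $\sp(\bT_h)$ can remain isolated away from $\sp(\bT)$, so in particular no discrete eigenvalue can lie inside a fixed neighbourhood of $\lambda_{\mathcal{J}}$ except the one converging to it. I would phrase this as: choosing $h_0$ small enough that $\sp(\bT_h)$ has no point in the punctured interval around $\mu_{\mathcal{J}}=1/\lambda_{\mathcal{J}}$ other than $\mu_{h,\mathcal{J}}$, and then translating the statement from the spectrum of the operators (the $\mu$'s near $0$) back to the eigenvalues $\lambda=1/\mu$. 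Care must be taken that the reciprocal transformation preserves the separation, which it does on any compact subinterval of $(0,\infty)$ bounded away from the accumulation point, so the constant degrades only by harmless factors absorbed into the comparison with $\min_{j\neq\mathcal{J}}|\lambda_j-\lambda_{\mathcal{J}}|$.
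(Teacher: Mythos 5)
The first thing to note is that the paper does not actually prove Proposition~\ref{separa_eig}: it is stated as a known consequence of the spurious-free character of the approximation, with a citation to \cite{MR3647956}, and no argument is given. Your proposal therefore supplies a proof where the paper supplies none. The route you settle on in your final paragraph --- work at the level of the compact operators, use the norm convergence $\|\bT-\bT_h\|\to 0$ of Lemma~\ref{lmm:converg1} to conclude that any compact subset of the resolvent set of $\bT$ is eventually free of spectrum of $\bT_h$ (non-pollution), and then transport the exclusion interval around $\mu_{\mathcal{J}}=1/\lambda_{\mathcal{J}}$ back through the map $\mu\mapsto 1/\mu$ --- is exactly the standard argument behind this kind of separation result, so your overall approach is sound.

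Two points need tightening. First, the intermediate claim in your second paragraph, that \emph{every} discrete eigenvalue $\lambda_{h,i}$ with $i\neq\mathcal{J}$, $i\le\dim\bV_h$, lies within $G/4$ of the corresponding continuous $\lambda_i$, is false for fixed $h$: discrete eigenvalues with index near $\dim\bV_h$ do not approximate their continuous counterparts at all. You recognize this and replace the argument, but note that there is a repair more elementary than the resolvent machinery: only the finitely many convergences $\lambda_{h,i}\to\lambda_i$, $i=1,\dots,\mathcal{J}+1$ (index-matched, which is what the spurious-free theory of \cite{BO} under norm convergence guarantees), are needed. Indeed, for $i<\mathcal{J}$ the triangle inequality gives $|\lambda_{\mathcal{J}}-\lambda_{h,i}|\ge G-G/4$, while for every $i>\mathcal{J}$ the increasing ordering gives $\lambda_{h,i}\ge\lambda_{h,\mathcal{J}+1}\ge\lambda_{\mathcal{J}+1}-G/4\ge\lambda_{\mathcal{J}}+3G/4$, which disposes of all large indices at once. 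Second, your closing remark that the reciprocal transformation degrades the constant ``by harmless factors absorbed into the comparison'' is not legitimate as stated: the proposition asserts the bound with the exact constant $\tfrac12\min_{j\neq\mathcal{J}}|\lambda_j-\lambda_{\mathcal{J}}|$, and nothing can be absorbed into it. The fix is to choose the excluded punctured neighbourhood in the $\mu$-variable not arbitrarily, but as the image of $(\lambda_{\mathcal{J}}-G/2,\ \lambda_{\mathcal{J}}+G/2)$ under $\lambda\mapsto 1/\lambda$; this set (minus $\mu_{\mathcal{J}}$ itself) lies in the resolvent set of $\bT$ because every $\lambda_j$ with $j\neq\mathcal{J}$ is at distance at least $G$ from $\lambda_{\mathcal{J}}$, and since inversion is a monotone bijection of $(0,\infty)$, exclusion of $\mu_{h,i}$ from it is \emph{equivalent} to $|\lambda_{\mathcal{J}}-\lambda_{h,i}|\ge G/2$, so no constant is lost. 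With these two repairs your argument is complete.
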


\section{A posteriori error analysis}
\label{sec:a_post}
The aim of this section is to introduce a suitable
residual-based error estimator for the Stokes
eigenvalue problem which is fully computable,
in the sense that  it depends only on quantities available from the VEM solution. Then, we will show its equivalence with the error.
Moreover, on the forthcoming analysis we will focus only on eigenvalues with simple multiplicity. With  this purpose, we introduce the following definitions and notations. For any polygon $\E\in \CT_h$, we denote by $\CE_{\E}$ the set of edges of $\E$
and 
$$\CE_h:=\bigcup_{\E\in\CT_h}\CE_{\E}.$$
We decompose $\CE_h=\CE_{\O}\cup\CE_{\partial\O}$,
where  $\CE_{\partial\O}:=\{\ell\in \CE_h:\ell\subset \partial\O\}$
and $\CE_{\O}:=\CE\backslash\CE_{\partial \O}$.
For each inner edge $\ell\in \CE_{\O}$ and for any  sufficiently smooth  function
$\bv$, we define the jump of its normal derivative on $\ell$ by
$$\left[\!\!\left[ \dfrac{\partial \bv}{\partial{ \boldsymbol{n}}}\right]\!\!\right]_\ell:=\nabla (\bv|_{\E})  \cdot \boldsymbol{n}_{\E}+\nabla ( \bv|_{\E'}) \cdot \boldsymbol{n}_{\E'} ,$$
where $\E$ and $\E'$ are  the two elements in $\CT_{h}$  sharing the
edge $\ell$ and $\boldsymbol{n}_{\E}$ and $\boldsymbol{n}_{\E'}$ are the respective outer unit normal vectors. As a consequence of the mesh regularity assumptions,
we have that each polygon $\E\in\CT_h$ admits a sub-triangulation $\CT_h^{\E}$
obtained by joining each vertex of $\E$ with the midpoint of the ball with respect
to which $\E$ is starred. Let $\CT_h:=\bigcup_{\E\in\CT_h}\CT_h^{\E}$.
Since we are also assuming \textbf{A2}, $\big\{\CT_h\big\}_{0 < h \leq 1}$
is a shape-regular family of triangulations of $\O$. 
Further, we introduce bubble functions on polygons as follows (see  \cite{MR3719046}).
 An interior bubble function $\psi_{\E}\in\H_0^1(\E)$ for a polygon $\E$
can be constructed piecewise as the sum of the  cubic
bubble functions  for each triangle of the
sub-triangulation $\CT_h^{\E}$ that attain the value 1 at the barycenter of the triangle. On the other hand, an edge bubble function $\psi_{\ell}$ for
$\ell\in\partial \E$ is a piecewise quadratic function
attaining the value 1 at the barycenter of $\ell$ and vanishing
on the polygons $\E\in\CT_h$  that do not contain $\ell$
on its boundary.

The following results which establish standard estimates
for bubble functions will be useful in what follows (see \cite{MR1885308,MR3059294}).

\begin{lemma}[Interior bubble functions]
\label{burbujainterior}
For any $\E\in \CT_h$, let $\psi_{\E}$ be the corresponding interior bubble function.
Then, there exists a constant $C>0$ 
independent of  $h_\E$ such that
\begin{align*}
C^{-1}\|q\|_{0,\E}^2&\leq \int_{\E}\psi_{\E} q^2\leq \|q\|_{0,\E}^2\qquad \forall q\in \mathbb{P}_k(\E),\\
C^{-1}\| q\|_{0,\E}&\leq \|\psi_{\E} q\|_{0,\E}+h_\E\|\nabla(\psi_{\E} q)\|_{0,\E}\leq C\|q\|_{0,\E}\qquad \forall q\in \mathbb{P}_k(\E).
\end{align*}
\end{lemma}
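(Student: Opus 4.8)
The plan is to reduce everything to the reference triangle by an affine change of variables, exploit the equivalence of norms on the finite-dimensional space $\mathbb{P}_k$, and then sum the resulting local estimates over the sub-triangulation $\CT_h^{\E}$. The key structural fact I would use is that, by construction, the restriction of $\psi_{\E}$ to each triangle $T\in\CT_h^{\E}$ equals the standard cubic bubble $b_T$ (normalized so that $\max_T b_T=1$), which vanishes on $\partial T$; hence $\psi_{\E}\in\H_0^1(\E)$, $0\le\psi_{\E}\le 1$, and $\int_{\E}\psi_{\E}q^2=\sum_{T}\int_T b_T\,q^2$ for every $q$, with $q|_T\in\mathbb{P}_k(T)$. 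Since the cubic bubble is affine-equivariant (it is the normalized product of the barycentric coordinates, and these are preserved by affine maps), for each $T$ the affine map $F_T$ onto the reference triangle $\widehat T$ satisfies $b_T\circ F_T=b_{\widehat T}$ and introduces only the Jacobian factor $|\det J_T|$, which cancels in any comparison between terms of equal homogeneity. Uniformity of all constants over the whole mesh family then follows from the shape-regularity of $\{\CT_h\}$ recorded above, itself a consequence of {\bf A1} and {\bf A2}.

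For the first estimate, the upper bound is immediate: since $0\le\psi_{\E}\le 1$ one has $\int_{\E}\psi_{\E}q^2\le\int_{\E}q^2=\|q\|_{0,\E}^2$. For the lower bound I would observe that on $\widehat T$ the bilinear form $(p,q)\mapsto\int_{\widehat T}b_{\widehat T}\,p\,q$ is an inner product on the finite-dimensional space $\mathbb{P}_k(\widehat T)$, so there is a constant $\widehat C$ depending only on $k$ with $\int_{\widehat T}b_{\widehat T}\,\widehat q^{\,2}\ge\widehat C\,\|\widehat q\|_{0,\widehat T}^2$. Mapping back to $T$ the factor $|\det J_T|$ cancels, yielding $\int_T b_T\,q^2\ge\widehat C\,\|q\|_{0,T}^2$ with the \emph{same} constant; summing over $T\in\CT_h^{\E}$ gives $\int_{\E}\psi_{\E}q^2\ge\widehat C\,\|q\|_{0,\E}^2$.

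For the second estimate, the lower bound uses that $\widehat q\mapsto\|b_{\widehat T}\widehat q\|_{0,\widehat T}$ is a norm on $\mathbb{P}_k(\widehat T)$, hence equivalent to $\|\cdot\|_{0,\widehat T}$; scaling back and summing gives $\|q\|_{0,\E}\lesssim\|\psi_{\E}q\|_{0,\E}\le\|\psi_{\E}q\|_{0,\E}+h_{\E}\|\nabla(\psi_{\E}q)\|_{0,\E}$. For the upper bound, $\|\psi_{\E}q\|_{0,\E}\le\|q\|_{0,\E}$ again from $0\le\psi_{\E}\le 1$, while for the gradient term I would split $\nabla(\psi_{\E}q)=(\nabla\psi_{\E})q+\psi_{\E}\nabla q$ on each $T$ and control each piece by an inverse inequality: shape-regularity gives $\|\nabla\psi_{\E}\|_{\infty,T}\lesssim h_T^{-1}$ (from $\nabla b_T=J_T^{-\top}\widehat\nabla b_{\widehat T}$ and $\|J_T^{-1}\|\lesssim h_T^{-1}$) together with $\|\nabla q\|_{0,T}\lesssim h_T^{-1}\|q\|_{0,T}$, so that $h_{\E}\|\nabla(\psi_{\E}q)\|_{0,T}\lesssim\|q\|_{0,T}$ because $h_T\sim h_{\E}$; summing over $T$ closes the bound.

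The only genuine difficulty is bookkeeping the constants so that they stay independent of $h_{\E}$ and of the particular element. This is exactly where the affine-equivariance of $b_T$ and the affine invariance of $\mathbb{P}_k$ (which make the reference-element constants universal) and the uniform shape-regularity of the sub-triangulation (controlling $\|J_T^{-1}\|$ and the equivalence $h_T\sim h_{\E}$) enter. Beyond these standard scaling arguments no new idea is required.
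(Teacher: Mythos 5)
Your proposal is correct. Note, however, that the paper does not prove this lemma at all: it is stated as a known result, with the reader referred to the literature on residual estimators for polygonal/virtual element methods, so there is no "paper proof" to compare against line by line. Your argument — restriction of $\psi_{\E}$ to each triangle $T$ of the sub-triangulation $\CT_h^{\E}$, affine mapping to the reference triangle, equivalence of norms on the finite-dimensional space $\mathbb{P}_k$, cancellation of the Jacobian factors, and an inverse estimate plus $\|\nabla\psi_{\E}\|_{\infty,T}\lesssim h_T^{-1}$ for the gradient term — is exactly the standard proof given in those references, and it is complete. The two points that genuinely need care are the ones you flag: (i) that $\psi_{\E}|_T$ is the affine-equivariant cubic bubble with $0\le\psi_{\E}\le1$, which holds by the paper's construction (and AM--GM for the upper bound $b_T\le1$); and (ii) that the sub-triangulation is uniformly shape regular with $h_T\sim h_{\E}$, which is precisely what assumptions \textbf{A1} and \textbf{A2} deliver (each $T$ has an edge of $\E$ of length $\gtrsim h_{\E}$ as a side and height $\gtrsim\gamma h_{\E}$ from the star-shapedness ball), as the paper itself records just before the lemma. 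So your write-up can stand as a self-contained proof of the cited result.
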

\begin{lemma}[Edge bubble functions]
\label{burbuja}
For any $\E\in \CT_h$ and $\ell\in\CE_{\E}$, let $\psi_{\ell}$
be the corresponding edge bubble function. Then, there exists
a constant $C>0$ independent of $h_\E$ such that
 \begin{equation*}
C^{-1}\|q\|_{0,\ell}^2\leq \int_{\ell}\psi_{\ell} q^2 \leq \|q\|_{0,\ell}^2\qquad
\forall q\in \mathbb{P}_k(\ell).
\end{equation*}
Moreover, for all $q\in\mathbb{P}_k(\ell)$, there exists an extension of  $q\in\mathbb{P}_k(\E)$ (again denoted by $q$) such that
 \begin{align*}
h_\E^{-1/2}\|\psi_{\ell} q\|_{0,\E}+h_\E^{1/2}\|\nabla(\psi_{\ell} q)\|_{0,\E}&\lesssim \|q\|_{0,\ell}.
\end{align*}
\end{lemma}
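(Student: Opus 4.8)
The plan is to reduce both assertions to standard bubble–function estimates on a reference configuration via affine scaling, exploiting the shape-regularity of the sub-triangulation $\CT_h^\E$ guaranteed by assumptions \textbf{A1} and \textbf{A2}.

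For the first chain of inequalities, the upper bound is immediate: since $\psi_\ell$ satisfies $0\le\psi_\ell\le 1$ on $\ell$, one has $\int_\ell\psi_\ell q^2\le\int_\ell q^2=\|q\|_{0,\ell}^2$. For the lower bound I would argue by equivalence of norms on the finite-dimensional space $\mathbb{P}_k(\ell)$: the map $q\mapsto(\int_\ell\psi_\ell q^2)^{1/2}$ is a norm on $\mathbb{P}_k(\ell)$ because $\psi_\ell$ is strictly positive on the interior of $\ell$, and all norms on a finite-dimensional space are equivalent. To see that the equivalence constant is independent of $h_\E$, I would pull back to a reference edge $\widehat{\ell}$ of unit length through an affine map; both $\int_\ell\psi_\ell q^2$ and $\|q\|_{0,\ell}^2$ acquire the same Jacobian factor $h_\ell$, so the ratio is scale invariant and the constant depends only on $k$.

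For the extension estimate I would first construct the extension. Letting $T\in\CT_h^\E$ be the triangle of the sub-triangulation having $\ell$ as one of its edges, I would extend $q$ to a polynomial on $\E$ (still of degree $\le k$) by taking it constant in the direction normal to $\ell$; this does not raise the polynomial degree, and within $\E$ the product $\psi_\ell q$ is supported on $T$. On the reference triangle $\widehat{T}$ the estimate $\|\psi_{\widehat{\ell}}\widehat{q}\|_{0,\widehat{T}}+\|\widehat{\nabla}(\psi_{\widehat{\ell}}\widehat{q})\|_{0,\widehat{T}}\lesssim\|\widehat{q}\|_{0,\widehat{\ell}}$ holds by norm equivalence on the finite-dimensional polynomial space. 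Mapping $\widehat{T}$ to $T$ by an affine transformation and tracking the Jacobians — an $L^2$ norm over the two-dimensional element scales like $h_\E$ relative to the one-dimensional edge norm, while each gradient contributes a factor $h_\E^{-1}$ — then yields $\|\psi_\ell q\|_{0,\E}\lesssim h_\E^{1/2}\|q\|_{0,\ell}$ and $\|\nabla(\psi_\ell q)\|_{0,\E}\lesssim h_\E^{-1/2}\|q\|_{0,\ell}$, which are exactly the two claimed bounds after multiplying by $h_\E^{-1/2}$ and $h_\E^{1/2}$ respectively.

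The main obstacle is that the polygon $\E$ cannot itself be mapped affinely onto a fixed reference element. This is circumvented by the observation above that, within $\E$, the function $\psi_\ell$ is supported on the single triangle $T\in\CT_h^\E$ having $\ell$ as an edge, together with the shape-regularity of $\CT_h^\E$ provided by \textbf{A1} and \textbf{A2}. The latter guarantees that the affine map $F:\widehat{T}\to T$ satisfies $|\det DF|\sim h_\E^2$, $\|DF\|\sim h_\E$ and $\|DF^{-1}\|\sim h_\E^{-1}$ with constants depending only on the regularity parameter $\gamma$, so that the scaling relations — and hence the final $h_\E$-independent constant — are uniform across the whole mesh.
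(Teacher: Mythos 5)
The paper never proves Lemma~\ref{burbuja}: it is quoted as a standard bubble-function result, with the proof deferred to the cited references, and only the construction of the extension sketched in Remark~\ref{extencion}. Your argument follows exactly the route those references take — trivial upper bound from $0\le\psi_\ell\le 1$ on $\ell$; finite-dimensional norm equivalence plus one-dimensional scale invariance for the lower bound (the pull-back of $\psi_\ell|_\ell$ to the unit interval is the fixed quadratic $4t(1-t)$, so the equivalence constant depends only on $k$); and, for the extension estimate, reduction to the unique sub-triangle $T\in\CT_h^{\E}$ having $\ell$ as an edge, where $\psi_\ell$ is supported within $\E$, followed by affine scaling with the uniform bounds $|\det DF|\sim h_\E^2$, $\|DF\|\sim h_\E$, $\|DF^{-1}\|\sim h_\E^{-1}$ guaranteed by \textbf{A1}--\textbf{A2}. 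The first chain of inequalities is handled correctly, your choice of extension coincides with the one the paper proposes in Remark~\ref{extencion}, and the Jacobian bookkeeping in the last step is right.

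There is, however, one step that is false as literally written and needs repair. The reference-element inequality $\|\psi_{\widehat\ell}\,\widehat q\|_{0,\widehat T}+\|\widehat\nabla(\psi_{\widehat\ell}\,\widehat q)\|_{0,\widehat T}\lesssim\|\widehat q\|_{0,\widehat\ell}$ cannot follow from ``norm equivalence on the finite-dimensional polynomial space'' for all $\widehat q\in\mathbb{P}_k(\widehat T)$, because the right-hand side is only a \emph{seminorm} on $\mathbb{P}_k(\widehat T)$: it vanishes on every polynomial divisible by the linear form defining $\widehat\ell$, while the left-hand side does not. The inequality holds only on the subspace of extensions, and here a second subtlety enters: the pull-back $q\circ F$ of your physical extension is constant not along a fixed reference direction but along $DF^{-1}\bn$, where $\bn$ is the normal to $\ell$, and this direction changes from element to element; hence the norm-equivalence constant a priori depends on it. To close the argument you must either (i) note that shape regularity keeps this direction uniformly transversal to $\widehat\ell$ (its angle with $\widehat\ell$ is bounded below in terms of $\gamma$), and that the equivalence constant depends continuously on the direction, so it is uniform by compactness; or, more simply, (ii) build the extension the other way around: fix once and for all the constant-normal extension on the reference element and set $q:=\widehat q\circ F^{-1}$ on $T$, which is a polynomial and therefore defined on all of $\E$. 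Since the lemma only asserts the existence of \emph{some} extension, choice (ii) is legitimate, makes extension and pull-back commute by construction, and renders the finite-dimensional argument uniform with no further work. With either repair your proof is complete.
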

\begin{remark}
\label{extencion}
A possible way of extending $q$ from $\ell\in\CE_{\E}$ to $\E$
so that Lemma~\ref{burbuja} holds is  as follows:
first we extend $q$ to the straight line $L\supset\ell$ using the same polynomial function.
Then, we extend it to the whole plain through a constant
prolongation in the normal direction to $L$. Finally, we restrict  the latter to $\E$. 
\end{remark}
\noindent We define the edge residuals as follows:
\begin{equation}
\label{saltocal}
J_{\ell}:=\left\{\begin{array}{l}
\dfrac{1}{2}\left[\!\left[( p_h\mathbb{I}-\nabla\boldsymbol{\Pi}^{\E}\bu_h) \bn\right]\!\right]_{\ell} \qquad\;\; \;\forall \ell\in \CE_{\O},
\\[0.3cm]
\0  \qquad\qquad\qquad\qquad \qquad \qquad \quad \; \forall \ell\in\CE_{\partial\O},
\end{array}\right. 
\end{equation}
which is clearly computable.

Let us define the errors $\texttt{e}_{\bu}:=\bu-\bu_h$ and $\texttt{e}_p:=p-p_h$. Now we prove the following identity.
\begin{lemma}
\label{lmm:A_error}
Let $(\bv,q)\in \bV\times \Q$. Then we have the  following identity
\begin{multline*}
A((\texttt{e}_{\bu},\texttt{e}_p),(\bv,q))=\l c(\bu,\bv)-\l_h c(\bu_h,\bv)\\
-b(\bu_h,q)+\sum_{\E\in\CT_h}\l_hc^\E(\bu_h-\bPi_{0}^{\E}\bu_h,\bv)
-\sum_{\E\in\CT_h}a^\E(\bu_h-\boldsymbol{\Pi}^{\E}\bu_h,\bv)\\
+\sum_{\E\in\CT_h}\int_\E\left(\l_h\bPi_{0}^{\E}\bu_h+\nu\Delta\boldsymbol{\Pi}^{\E}\bu_h-\nabla p_h\right)\cdot \bv
+\sum_{\E\in\CT_h}\sum_{\ell\in\CE_{\E}}\int_\ell J_{\ell}v.
\end{multline*}
\end{lemma}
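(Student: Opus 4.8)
The plan is to exploit the bilinearity of $A(\cdot,\cdot)$ together with the fact that $(\bu,p)$ solves the continuous eigenproblem \eqref{eq:eigen_A}. Writing $A((\texttt{e}_{\bu},\texttt{e}_p),(\bv,q))=A((\bu,p),(\bv,q))-A((\bu_h,p_h),(\bv,q))$ and using $A((\bu,p),(\bv,q))=\l c(\bu,\bv)$ together with the definition \eqref{eq:formaA} of $A$, I would reduce the claim to reshaping $-a(\bu_h,\bv)-b(\bv,p_h)-b(\bu_h,q)$; here $\l c(\bu,\bv)$ and $-b(\bu_h,q)$ already match the right-hand side, so only $-a(\bu_h,\bv)-b(\bv,p_h)$ remains. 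To bring in the discrete eigenvalue and the VEM projectors I would add and subtract $\l_h c(\bu_h,\bv)$ and localize it as $c(\bu_h,\bv)=\sum_{\E\in\CT_h}c^\E(\bu_h-\bPi_0^\E\bu_h,\bv)+\sum_{\E\in\CT_h}c^\E(\bPi_0^\E\bu_h,\bv)$. The term $-\l_h c(\bu_h,\bv)$ and the first sum appear verbatim in the statement, whereas the second sum equals $\sum_{\E}\l_h\int_\E\bPi_0^\E\bu_h\cdot\bv$, which is precisely the $\l_h\bPi_0^\E\bu_h$ contribution of the volume integral.

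Next I would localize $a(\bu_h,\bv)=\sum_\E a^\E(\bu_h,\bv)$ and split each local form as $a^\E(\bu_h-\bPi^\E\bu_h,\bv)+a^\E(\bPi^\E\bu_h,\bv)$; the first piece yields the stiffness residual $-\sum_\E a^\E(\bu_h-\bPi^\E\bu_h,\bv)$. For the polynomial piece $a^\E(\bPi^\E\bu_h,\bv)=\nu\int_\E\nabla\bPi^\E\bu_h:\nabla\bv$ and for $-b^\E(\bv,p_h)=\int_\E p_h\,\div\bv$, I would integrate by parts on each element. This generates the interior residuals $\nu\int_\E\Delta\bPi^\E\bu_h\cdot\bv$ and $-\int_\E\nabla p_h\cdot\bv$, completing the remaining two terms of the volume integral, together with the boundary contributions $\int_{\partial\E}\big((p_h\mathbb{I}-\nabla\bPi^\E\bu_h)\bn\big)\cdot\bv$.

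Finally I would reassemble the edge integrals $\sum_\E\int_{\partial\E}(\cdots)$ as a sum over edges. Since $\bv\in[\H^1_0(\O)]^2$ is single-valued across interior edges and vanishes on $\partial\O$, every $\ell\in\CE_{\partial\O}$ drops out, while each interior edge $\ell\in\CE_\O$ collects the two traces coming from the elements $\E,\E'$ that share it; adding them with their outward normals $\bn_\E,\bn_{\E'}$ reproduces the jump $\jump{(p_h\mathbb{I}-\nabla\bPi^\E\bu_h)\bn}_\ell$, and the double counting of interior edges in $\sum_\E\sum_{\ell\in\CE_\E}$ is exactly balanced by the factor $\tfrac12$ in the definition \eqref{saltocal} of $J_\ell$, giving $\sum_\E\sum_{\ell\in\CE_\E}\int_\ell J_\ell\cdot\bv$. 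Gathering the mass, stiffness, volume, and edge contributions then yields the asserted identity. I expect the edge recombination to be the only delicate point: one must track the orientation of the outward normals so that adjacent traces assemble into a jump rather than an average, verify that the boundary-edge terms disappear by virtue of the homogeneous Dirichlet condition on $\bv$, and reconcile the double counting of interior edges with the $\tfrac12$ normalization.
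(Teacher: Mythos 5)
Your proposal is correct and follows essentially the same route as the paper's own proof: subtract $A((\bu_h,p_h),(\bv,q))$ from $\lambda c(\bu,\bv)$, add and subtract $\lambda_h c(\bu_h,\bv)$, split the local forms via $\bPi_0^{\E}$ and $\bPi^{\E}$, integrate by parts elementwise, and reassemble the boundary terms into edge jumps. Your handling of the delicate points (the factor $\tfrac12$ in \eqref{saltocal} versus the double counting of interior edges, and the vanishing of boundary-edge terms since $\bv\in[\H_0^1(\O)]^2$) matches what the paper's argument implicitly relies on.
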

\begin{proof}
From the definition of $A(\cdot,\cdot)$, using the fact $(\lambda,(\bu,p))$ is a solution of \eqref{eq:eigen_A},  adding and subtracting  $\lambda_hc(\bu_h,\bv)$, and  integration by parts elementwise, we have
\begin{multline*}
A((\texttt{e}_{\bu},\texttt{e}_p),(\bv,q))=\lambda c(\bu,\bv)-A((\bu_h,p_h),(\bv,q))
=\lambda c(\bu,\bv)-\sum_{\E\in\CT_h}A^\E((\bu_h,p_h),(\bv,q))\\
=\lambda c(\bu,\bv)-\sum_{\E\in\CT_h}\left[a^\E(\bu_h,\bv)+b^\E(\bv,p_h)+b^\E(\bu_h,q)\right]\\
=\lambda c(\bu,\bv)-\sum_{\E\in\CT_h}\left[a^\E(\bu_h-\bPi^\E\bu_h,\bv)+a^{\E}(\bPi^\E\bu_h,\bv)+b^\E(\bv,p_h)+b^\E(\bu_h,q)\right]\\
=\lambda c(\bu,\bv)-\lambda_h c(\bu_h,\bv)-\sum_{\E\in\CT_h}a^\E(\bu_h-\bPi^\E\bu_h,\bv)\\
-\sum_{\E\in\CT_h}\left[a^{\E}(\bPi^\E\bu_h,\bv)+b^\E(\bv,p_h)-\lambda_hc^\E(\bu_h,\bv)\right]-\sum_{\E\in\CT_h}b^\E(\bu_h,q)\\
=\lambda c(\bu,\bv)-\lambda_h c(\bu_h,\bv)-\sum_{\E\in\CT_h}a^\E(\bu_h-\bPi^\E\bu_h,\bv)+\lambda_h\sum_{\E\in\CT_h} c^\E(\bu_h-\bPi_0^\E\bu_h,\bv)\\
-\sum_{\E\in\CT_h}\left[a^{\E}(\bPi^\E\bu_h,\bv)+b^\E(\bv,p_h)-\lambda_hc^\E(\bPi_0^\E\bu_h,\bv)\right]-\sum_{\E\in\CT_h}b^\E(\bu_h,q)\\
=\lambda c(\bu,\bv)-\lambda_h c(\bu_h,\bv)- b(\bu_h,q)-\sum_{\E\in\CT_h}a^\E(\bu_h-\bPi^\E\bu_h,\bv)+\lambda_h\sum_{\E\in\CT_h} c^\E(\bu_h-\bPi_0^\E\bu_h,\bv)\\
+\sum_{\E\in\CT_h}\left[\int_\E(\nu\Delta\bPi^\E\bu_h -\nabla p_h+\lambda_h\bPi_0^\E\bu_h)\cdot\bv\right]+\sum_{\E\in\CT_h}\sum_{\ell\in\CE_{\E}}\int_\ell J_{\ell}\bv, 
\end{multline*}
where $J_\ell$ has defined in \eqref{saltocal}. This concludes the proof.
\end{proof}
For all $\E\in\CT_h$, we introduce the following local terms and the local error indicator $\eta_\E$:
\begin{align*}
\Theta_\E^2&:=a_h^\E(\bu_h-\boldsymbol{\Pi}^{\E}\bu_h,\bu_h-\boldsymbol{\Pi}^{\E}\bu_h),\\
R_\E^2&:=h_\E^2\|\Upsilon_\E\|_{0,\E},\\
\eta_\E^2&:=\Theta_\E^2+R_\E^2+\sum_{\ell\in\CE_\E}h_\E\|J_{\ell}\|_{0,\ell}^2,
\end{align*}
where $\Upsilon_\E:=(\l_h\bPi_{0}^{\E}\bu_h+\nu\Delta\boldsymbol{\Pi}^{\E}\bu_h-\nabla p_h)|_\E$.
Now we are in a position to define the global error estimator by
\begin{equation}
\label{eq:global_eta}
\eta:=\left(\sum_{\E\in\CT_h}\eta_\E^2\right)^{1/2}.
\end{equation}

\noindent The next step is to prove that this estimator is reliable and efficient.
\subsection{Reliability}
As a first goal, we will prove upper bounds for the different error terms. We begin with the following result.
\begin{lemma}\label{lmm:bound_error_function}
The following estimate holds
\begin{equation*}
\|\bu-\bu_h\|_{1,\O}+\|p-p_h\|_{0,\O}\lesssim \eta+|\lambda-\lambda_h|+\|\bu-\bu_h\|_{0,\O},
\end{equation*}
where the hidden constant is independent of $h$.
\end{lemma}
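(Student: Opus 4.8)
The plan is to reduce the estimate to the residual identity of Lemma~\ref{lmm:A_error} by means of the continuous stability of $A(\cdot,\cdot)$ recalled after \eqref{eq:formaA}. Applying that stability to the pair $(\texttt{e}_{\bu},\texttt{e}_p)$ produces a test pair $(\bw,s)\in\bV\times\Q$ with $\|\bw\|_{1,\O}+\|s\|_{0,\O}\le C$ and
\[
\|\bu-\bu_h\|_{1,\O}+\|p-p_h\|_{0,\O}\lesssim A\big((\texttt{e}_{\bu},\texttt{e}_p),(\bw,s)\big),
\]
so that it remains to bound the right-hand side by $\eta+|\lambda-\lambda_h|+\|\bu-\bu_h\|_{0,\O}$. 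I would substitute $(\bv,q)=(\bw,s)$ into the identity of Lemma~\ref{lmm:A_error} and treat its terms in turn.

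The zeroth-order terms are immediate. Splitting $\lambda c(\bu,\bw)-\lambda_h c(\bu_h,\bw)=\lambda c(\bu-\bu_h,\bw)+(\lambda-\lambda_h)c(\bu_h,\bw)$ and using $\|\bw\|_{0,\O}\le C$ together with the boundedness of $\bu_h$ produces the contributions $\|\bu-\bu_h\|_{0,\O}$ and $|\lambda-\lambda_h|$. The term $b(\bu_h,s)$ vanishes, since the discrete velocity is exactly divergence free: $\div\bu_h$ is piecewise constant with zero global mean, hence lies in $\Q_h$, and testing the second discrete equation with $q_h=\div\bu_h$ forces $\div\bu_h=0$. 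For the two projector-consistency sums I would use that $\Theta_\E^2=a_h^\E(\bu_h-\bPi^\E\bu_h,\bu_h-\bPi^\E\bu_h)=S^\E(\bu_h-\bPi^\E\bu_h,\bu_h-\bPi^\E\bu_h)$ is equivalent to $|\bu_h-\bPi^\E\bu_h|_{1,\E}^2$ (the polynomial part of $a_h^\E$ annihilates $\bu_h-\bPi^\E\bu_h\in\ker\bPi^\E$, and $S^\E$ is spectrally equivalent to $a^\E$); then $\sum_\E a^\E(\bu_h-\bPi^\E\bu_h,\bw)\lesssim\sum_\E\Theta_\E|\bw|_{1,\E}\le\eta\,|\bw|_{1,\O}$, while the $\bPi_0^\E$-sum is controlled by $\|\bu_h-\bPi_0^\E\bu_h\|_{0,\E}\le\|\bu_h-\bPi^\E\bu_h\|_{0,\E}\lesssim h_\E\Theta_\E$ (a boundary-mean-zero Poincar\'e bound) and Cauchy--Schwarz, giving $\lambda_h\sum_\E c^\E(\bu_h-\bPi_0^\E\bu_h,\bw)\lesssim\eta$.

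The heart of the proof, and the step I expect to be the main obstacle, is the residual pair $\sum_\E\int_\E\Upsilon_\E\cdot\bw+\sum_\E\sum_{\ell\in\CE_\E}\int_\ell J_\ell\,\bw$, which cannot be bounded termwise by $\eta$ and must be split through a quasi-interpolant. I would take a Cl\'ement-type operator $\bw_I\in\bV_h$ enjoying the local estimates $\|\bw-\bw_I\|_{0,\E}\lesssim h_\E|\bw|_{1,\widetilde\E}$, $\|\bw-\bw_I\|_{0,\ell}\lesssim h_\ell^{1/2}|\bw|_{1,\widetilde\E}$ and $|\bw_I|_{1,\O}\lesssim\|\bw\|_{1,\O}$ on element patches $\widetilde\E$, and write $\bw=(\bw-\bw_I)+\bw_I$. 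Against $\bw-\bw_I$, Cauchy--Schwarz and these estimates reproduce exactly $\sum_\E R_\E\,|\bw|_{1,\widetilde\E}$ from the volume residual and $\sum_{\ell}h_\ell^{1/2}\|J_\ell\|_{0,\ell}\,|\bw|_{1,\widetilde\E}$ from the jumps, and a finite-overlap argument bounds both by $\eta\,\|\bw\|_{1,\O}$.

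The delicate contribution is the discrete part $\bw_I$. Using the integrated-by-parts form behind Lemma~\ref{lmm:A_error}, the residual tested against $\bw_I$ equals $-\sum_\E\big[a^\E(\bPi^\E\bu_h,\bw_I)+b^\E(\bw_I,p_h)-\lambda_h c^\E(\bPi_0^\E\bu_h,\bw_I)\big]$; here I would insert the discrete equation \eqref{eq:weak_stokes_eigen_disc} to replace $b(\bw_I,p_h)$ by $\lambda_h c_h(\bu_h,\bw_I)-a_h(\bu_h,\bw_I)$. Invoking the defining orthogonality of $\bPi^\E$ (whence $a^\E(\bPi^\E\bu_h,\bw_I)=a^\E(\bPi^\E\bu_h,\bPi^\E\bw_I)$) and the $L^2$-orthogonality of $\bPi_0^\E$ (which cancels the mass mismatch $c_h^\E(\bu_h,\bw_I)-c^\E(\bPi_0^\E\bu_h,\bw_I)=0$), all consistent pieces cancel and only the stabilization remainder $\sum_\E S^\E(\bu_h-\bPi^\E\bu_h,\bw_I-\bPi^\E\bw_I)$ survives; a Cauchy--Schwarz in $S^\E$ then bounds it by $\sum_\E\Theta_\E|\bw_I|_{1,\E}\lesssim\eta\,\|\bw\|_{1,\O}$. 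Collecting all contributions and absorbing the uniformly bounded factor $\|\bw\|_{1,\O}+\|s\|_{0,\O}\le C$ yields the asserted bound.
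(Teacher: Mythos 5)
Your proposal is correct, and it rests on the same pillars as the paper's proof: the stability of $A(\cdot,\cdot)$, the residual identity of Lemma~\ref{lmm:A_error}, an interpolant with local first-order estimates, and the consistency/stability properties of the VEM forms. The difference lies in how the discrete test function is handled. The paper inserts the interpolant \emph{before} invoking the residual machinery: it adds and subtracts $A_h((\bu_h,p_h),(\bv_I,\Ph(q)))$ and uses the discrete eigenvalue equation \eqref{eq:weak_stokes_eigen_disc} right away, so the strong residuals $\Upsilon_\E$ and $J_\ell$ are only ever tested against $\bv-\bv_I$ (terms $T_5$, $T_6$ there), while the discrete part appears directly as the consistency mismatches $\lambda_h(c(\bu_h,\bv_I)-c_h(\bu_h,\bv_I))$ and $a_h(\bu_h,\bv_I)-a(\bu_h,\bv_I)$. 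You instead test the full identity with the stability pair $(\bw,s)$, split only the residual pair via $\bw=(\bw-\bw_I)+\bw_I$, and on the $\bw_I$ part reverse the elementwise integration by parts and insert the discrete equation there; the orthogonality of $\bPi^\E$ and $\bPi_0^\E$ then collapses everything to the stabilization remainder $\sum_{\E\in\CT_h}S^\E(\bu_h-\bPi^\E\bu_h,\bw_I-\bPi^\E\bw_I)\lesssim\sum_{\E\in\CT_h}\Theta_\E|\bw_I|_{1,\E}\lesssim\eta\,\|\bw\|_{1,\O}$. Both routes end with the same list of terms and the same bounds; yours makes explicit \emph{why} the strong residual tested against a conforming discrete function reduces to the stabilization term, at the price of the extra un-integration step, whereas the paper's add--subtract trick avoids that computation entirely. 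A further small (and welcome) difference: you dispose of $b(\bu_h,s)$ by showing $\div\bu_h=0$ exactly (it is piecewise constant with zero mean, hence in $\Q_h$, and testing the second discrete equation with $q_h=\div\bu_h$ kills it), which is cleaner than, though equivalent to, the paper's treatment of $T_4$ via $\Ph(q)\in\Q_h$.
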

\begin{proof}
From the definition of $A(\cdot,\cdot)$, algebraic manipulations leads to the following sequel of equalities
\begin{multline*}
\|\bu-\bu_h\|_{1,\O}+\|p-p_h\|_{0,\O}\leq A((\texttt{e}_{\bu},\texttt{e}_p),(\bv,q))=A((\texttt{e}_{\bu},\texttt{e}_p),(\bv-\bv_I,\mathcal{P}_{h}(q)))\\
+A((\texttt{e}_{\bu},\texttt{e}_p),(\bv_I,\mathcal{P}_{h}(q)))+A_h((\bu_h,p_h),(\bv_I,\mathcal{P}_{h}(q)))-A_h((\bu_h,p_h),(\bv_I,\mathcal{P}_{h}(q)))\\
=A((\texttt{e}_{\bu},\texttt{e}_p),(\bv-\bv_I,\mathcal{P}_{h}(q)))+\lambda c(\bu,\bv_I)-\lambda_hc_h(\bu_h,\bv_I)+a_h(\bu_h,\bv_I)-a(\bu_h\bv_I)\\
=\underbrace{\lambda c(\bu,\bv)-\lambda_h c(\bu_h,\bv)}_{T_1}+\underbrace{\lambda_h(c(\bu_h,\bv_I)-c_h(\bu_h,\bv_I))}_{T_2}+\underbrace{a_h(\bu_h,\bv_I)-a(\bu_h\bv_I)}_{T_3}\\
-\underbrace{ b(\bu_h,\mathcal{P}_{h}(q))}_{T_4}-\underbrace{\sum_{\E\in\CT_h}a^\E(\bu_h-\bPi^\E\bu_h,\bv-\bv_I)+\lambda_h\sum_{\E\in\CT_h} c^\E(\bu_h-\bPi_0^\E\bu_h,\bv-\bv_I)}_{T_5}\\
+\underbrace{\sum_{\E\in\CT_h}\left[\int_\E\Upsilon_\E\cdot(\bv-\bv_I)\right]+\sum_{\E\in\CT_h}\sum_{\ell\in\CE_{\E}}\int_\ell J_{\ell}(\bv-\bv_I)}_{T_6}.
\end{multline*}
Now we bound each of the contributions $T_i$, $i = 1,\ldots,6$.  Observe that for $T_1$, we have
\begin{equation}
\label{eq:boundT1}
T_1\leq \|\lambda\bu-\lambda_h\bu_h\|_{0,\O}\|\bv\|_{0,\O}\leq (|\lambda-\lambda_h|\|\bu\|_{0,\O}+|\lambda_h|\|\bu-\bu_h\|_{0,\O})\|\bv\|_{1,\O}.
\end{equation}
 \noindent For $T_2$, using the  stability property, we have
 \begin{equation}
 \label{eq:boundT2}
 T_2=\lambda_h\sum_{\E\in\CT_h}c^\E(\bu_h-\bPi_0^\E\bu_h,\bv_I) \lesssim \sum_{\E\in\CT_h}c^\E(\bu_h-\bPi_0^\E\bu_h,\bu_h-\bPi_0^\E\bu_h)^{1/2} \|\bv_I\|_{0,\E}.
 \end{equation}
 From the consistency,  we have for $T_3$
 \begin{multline}
 \label{eq:boundT3}
 T_3=\sum_{\E\in\CT_h}[a_h^\E(\bu_h-\bPi^\E\bu_h,\bv_I)-a^\E(\bu_h-\bPi^\E\bu_h,\bv_I)]\\
 \leq \sum_{\E\in\CT_h}a_h^\E(\bu_h-\bPi^\E\bu_h,\bu_h-\bPi^\E\bu_h)^{1/2} a_h(\bv_I,\bv_I)^{1/2}\\
 +\sum_{\E\in\CT_h}a^\E(\bu_h-\bPi^\E\bu_h,\bu_h-\bPi^\E\bu_h)^{1/2} a(\bv_I,\bv_I)^{1/2}\\
 \leq \left(\sum_{\E\in\CT_h}a_h^\E(\bu_h-\bPi^\E\bu_h,\bu_h-\bPi^\E\bu_h)\right)^{1/2}\|\bv\|_{1,\O}.
 \end{multline}
Upon using the definition of $\mathcal{P}_{h}$, and \eqref{eq:weak_stokes_eigen_disc}, we deduce that
  \begin{equation}
  \label{eq:boundT4}
  T_4=0.
   \end{equation}
 Proceeding as in the previous cases,  for $T_5$ it is easy to check that
 \begin{equation}
 \label{eq:boundT5}
 T_5\lesssim\left[\sum_{\E\in\CT_h}\left(a_h^\E(\bu_h-\bPi^\E\bu_h,\bu_h-\bPi^\E\bu_h)+\lambda_h c^\E(\bu_h-\bPi_0^\E\bu_h,\bu_h-\bPi_0^\E\bu_h)\right)\right]^{1/2}\|\bv\|_{1,\O}.
  \end{equation}
 For $T_6$ we have
 \begin{multline}
 \label{eq:boundT6}
 T_6\lesssim\sum_{\E\in\CT_h}\|\lambda_h\bPi_0^{\E}\bu_h+\nu\Delta\bPi^\E\bu_h-\nabla p_h\|_{0,\E}\|\bv-\bv_I\|_{0,\E}+\sum_{\ell\in\mathcal{E}_\E}\int_{\ell} J_\ell(\bv-\bv_I)\\
 \lesssim \sum_{\E\in\CT_h}\left[h_\E\|\lambda_h\bPi_0^{\E}\bu_h+\nu\Delta\bPi^\E\bu_h-\nabla p_h\|_{0,\E}\|\bv\|_{1,\E}+\sum_{\ell\in\mathcal{E}_\E}h_\E^{1/2}\|J_\ell\|_{0,\ell}\|\bv\|_{1,\E} \right]\\
 \lesssim \left\{\sum_{\E\in\CT_h}[h_\E^2\|\lambda_h\bPi_0^{\E}\bu_h+\nu\Delta\bPi^\E\bu_h-\nabla p_h\|_{0,\E}^2+\sum_{\ell\in\mathcal{E}_\E}h_\E\|J_\ell\|_{0,\ell}^2] \right\}^{1/2}\|\bv\|_{1,\O}.
 \end{multline}
Finally, we note that 
 \begin{multline}
 \label{eq:bound_G}
\sum_{\E\in\CT_h}c^\E(\bu_h-\bPi_0^\E\bu_h,\bu_h-\bPi_0^\E\bu_h)= \sum_{\E\in\CT_h}\|\bu_h-\bPi_0^\E\bu_h\|_{0,\E}^2\leq \sum_{\E\in\CT_h}\|\bu_h-\bPi^\E\bu_h\|_{0,\E}^2\\
 \lesssim \sum_{\E\in\CT_h}|\bu_h-\bPi^\E\bu_h|_{1,\E}^2\lesssim \sum_{\E\in\CT_h}a_h^\E(\bu_h-\bPi^\E\bu_h,\bu_h-\bPi^\E\bu_h),
  \end{multline}
  where we have employed the fact that the projection operator $\bPi^\E$ is invariant on polynomial and standard approximation property of the projector. 
 Hence, gathering \eqref{eq:boundT1}, \eqref{eq:boundT2}, \eqref{eq:boundT3}, \eqref{eq:boundT4}, \eqref{eq:boundT5}, \eqref{eq:boundT6} and \eqref{eq:bound_G} we conclude the proof.
\end{proof}

\begin{lemma}\label{lmm:estimateupL2}
The following estimate holds
\begin{equation*}
\|\bu-\bu_h\|_{1,\O}+\|p-p_h\|_{0,\O}+\|\bu-\bPi_0\bu_h\|_{0,\O}+\|\bu-\bPi\bu_h\|_{1,h}\lesssim \eta+|\lambda-\lambda_h|+\|\bu-\bu_h\|_{0,\O},
\end{equation*}
where the hidden constant is  independent of $h$.
\end{lemma}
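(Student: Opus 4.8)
The plan is to notice that the first two terms on the left-hand side, namely $\|\bu-\bu_h\|_{1,\O}+\|p-p_h\|_{0,\O}$, are already estimated by the previous Lemma~\ref{lmm:bound_error_function}. Hence it suffices to bound the two remaining projection contributions $\|\bu-\bPi_0\bu_h\|_{0,\O}$ and $|\bu-\bPi\bu_h|_{1,h}$ by the same right-hand side $\eta+|\lambda-\lambda_h|+\|\bu-\bu_h\|_{0,\O}$, and then add the three estimates together.

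First I would treat the $\L^2$ projection term. Adding and subtracting $\bu_h$ and applying the triangle inequality gives
\begin{equation*}
\|\bu-\bPi_0\bu_h\|_{0,\O}\leq \|\bu-\bu_h\|_{0,\O}+\|\bu_h-\bPi_0\bu_h\|_{0,\O}.
\end{equation*}
The second summand is precisely the quantity already controlled in \eqref{eq:bound_G}, which shows that $\sum_{\E\in\CT_h}\|\bu_h-\bPi_0^\E\bu_h\|_{0,\E}^2\lesssim \sum_{\E\in\CT_h}\Theta_\E^2\leq \eta^2$, whence $\|\bu_h-\bPi_0\bu_h\|_{0,\O}\lesssim \eta$. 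Thus the third term is dominated by $\eta+\|\bu-\bu_h\|_{0,\O}$.

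Next, for the broken $\H^1$-seminorm term I would again split with the triangle inequality,
\begin{equation*}
|\bu-\bPi\bu_h|_{1,h}\leq |\bu-\bu_h|_{1,h}+|\bu_h-\bPi\bu_h|_{1,h}\leq \|\bu-\bu_h\|_{1,\O}+|\bu_h-\bPi\bu_h|_{1,h}.
\end{equation*}
Since $a^\E(\bw_h,\bw_h)=\nu\,|\bw_h|_{1,\E}^2$, the lower stability bound $\alpha_* a^\E(\cdot,\cdot)\leq a_h^\E(\cdot,\cdot)$ yields $\nu\,|\bu_h-\bPi^\E\bu_h|_{1,\E}^2\leq \alpha_*^{-1}\,a_h^\E(\bu_h-\bPi^\E\bu_h,\bu_h-\bPi^\E\bu_h)=\alpha_*^{-1}\Theta_\E^2$; summing over $\E\in\CT_h$ gives $|\bu_h-\bPi\bu_h|_{1,h}\lesssim \eta$. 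Invoking now Lemma~\ref{lmm:bound_error_function} to bound $\|\bu-\bu_h\|_{1,\O}$ by $\eta+|\lambda-\lambda_h|+\|\bu-\bu_h\|_{0,\O}$, the fourth term is controlled by the desired right-hand side as well.

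Collecting the contributions and absorbing constants concludes the argument. There is no genuine obstacle here: the estimate follows directly from Lemma~\ref{lmm:bound_error_function} combined with the observation—already packaged in \eqref{eq:bound_G} and in the stability of $a_h^\E$—that both projection defects $\|\bu_h-\bPi_0\bu_h\|_{0,\O}$ and $|\bu_h-\bPi\bu_h|_{1,h}$ are dominated by $\big(\sum_{\E\in\CT_h}\Theta_\E^2\big)^{1/2}\leq \eta$. The only point requiring a little care is to recognize these defect terms as part of the estimator $\eta$ rather than leaving them as additional error contributions.
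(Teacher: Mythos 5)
Your proof is correct and follows essentially the same route as the paper: a triangle inequality splitting off the projection defects $\|\bu_h-\bPi_0\bu_h\|_{0,\O}$ and $|\bu_h-\bPi\bu_h|_{1,h}$, which are then absorbed into the stabilization part $\Theta_\E$ of the estimator (the paper packages this as the bound \eqref{eq:cotasupstba}, justified by exactly the ingredients you cite, namely \eqref{eq:bound_G} and the stability of $a_h^\E$), followed by an application of Lemma~\ref{lmm:bound_error_function}. The only cosmetic difference is that you control $\|\bu-\bPi_0\bu_h\|_{0,\O}$ via $\|\bu-\bu_h\|_{0,\O}$ while the paper uses $\|\bu-\bu_h\|_{1,\E}$; both are admissible since both quantities appear on the right-hand side of the claimed estimate.
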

\begin{proof}
For each polygon $\E\in\CT_h$, we have
\begin{equation*}
\|\bu-\bPi_0^\E\bu_h\|_{0,\E}+\|\bu-\bPi^\E\bu_h\|_{1,\E}\leq \|\bu-\bu_h\|_{1,\E}+\|\bu_h-\bPi_0^\E\bu_h\|_{0,\E}+\|\bu_h-\bPi^\E\bu_h\|_{1,\E}.
 \end{equation*}
The result follows, summing over all polygons and using that
 \begin{equation}\label{eq:cotasupstba}\|\bu_h-\bPi_0^\E\bu_h\|_{0,\E}+\|\bu_h-\bPi^\E\bu_h\|_{1,\E}\lesssim \Theta_\E^2\leq \eta_\E^2,
 \end{equation} together with Lemma \ref{lmm:bound_error_function}.
\end{proof}
\begin{lemma}\label{lmm:bound_error_lambdas}
The following estimate holds
\begin{equation*}
|\lambda-\lambda_h|\lesssim \eta^2+|\lambda-\lambda_h|^2+\|\bu-\bu_h\|_{0,\O},
\end{equation*}
where the hidden constant is independent of $h$.
\end{lemma}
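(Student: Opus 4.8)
The plan is to recycle the eigenvalue error identity already obtained in the proof of Theorem~\ref{cotadoblepandeo}, but to bound its right-hand side by a posteriori quantities rather than by powers of $h$. The most economical entry point is the estimate isolated in Remark~\ref{reamrk:ord2}, namely
\begin{equation*}
|\lambda-\lambda_h|\lesssim \|\bu-\bu_h\|_{1,\O}^2+\|p-p_h\|_{0,\O}^2+\sum_{\E\in\CT_h}\left(|\bu-\bPi^\E\bu_h|_{1,\E}^2+\|\bu-\bPi_0^\E\bu_h\|_{0,\E}^2\right),
\end{equation*}
which is legitimate because $c(\bu_h,\bu_h)$ is bounded below by a fixed positive constant, exactly as in \eqref{eq:termV}. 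Thus the whole argument is reduced to controlling the four squared contributions on the right by the estimator.

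The next step is to recognize that these four contributions are precisely the squares of the quantities already estimated in Lemma~\ref{lmm:estimateupL2}. Introducing the shorthand $E:=\eta+|\lambda-\lambda_h|+\|\bu-\bu_h\|_{0,\O}$, that lemma gives
\begin{equation*}
\|\bu-\bu_h\|_{1,\O}+\|p-p_h\|_{0,\O}+\|\bu-\bPi_0\bu_h\|_{0,\O}+\|\bu-\bPi\bu_h\|_{1,h}\lesssim E,
\end{equation*}
so each individual term is $\lesssim E$. Since $\sum_{\E\in\CT_h}\|\bu-\bPi_0^\E\bu_h\|_{0,\E}^2=\|\bu-\bPi_0\bu_h\|_{0,\O}^2$ and $\sum_{\E\in\CT_h}|\bu-\bPi^\E\bu_h|_{1,\E}^2=|\bu-\bPi\bu_h|_{1,h}^2$, the bookkeeping is immediate and the whole right-hand side of the Remark~\ref{reamrk:ord2} bound is $\lesssim E^2$, whence $|\lambda-\lambda_h|\lesssim E^2$.

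Finally I would expand $E^2$ and dispose of the cross terms by Young's inequality, so that $E^2\lesssim \eta^2+|\lambda-\lambda_h|^2+\|\bu-\bu_h\|_{0,\O}^2$, and then replace the last term by $\|\bu-\bu_h\|_{0,\O}$ using that, for $h$ small enough, $\|\bu-\bu_h\|_{0,\O}\le 1$ and hence $\|\bu-\bu_h\|_{0,\O}^2\le\|\bu-\bu_h\|_{0,\O}$; this is exactly why the $L^2$ velocity error enters the statement unsquared. The genuine obstacle is conceptual rather than computational: because Remark~\ref{reamrk:ord2} is quadratic, the term $|\lambda-\lambda_h|^2$ reappears on the right and cannot be absorbed at this stage, and $\|\bu-\bu_h\|_{0,\O}$ remains as a higher-order remainder. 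Both are deliberately retained in the statement; they will be shown to be dominated by $\eta$ in the final reliability bound, once the superconvergence of the $L^2$ velocity error in Theorem~\ref{The:L2up} and the convergence rates of Theorem~\ref{thm:error_spaces} and Theorem~\ref{cotadoblepandeo} are invoked. The present lemma is thus the self-contained intermediate step that isolates these remainders.
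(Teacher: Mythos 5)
Your proposal is correct and follows exactly the paper's own route: the paper's proof is precisely the one-line combination of Remark~\ref{reamrk:ord2} with Lemma~\ref{lmm:estimateupL2}, which you have simply spelled out (squaring the bound from Lemma~\ref{lmm:estimateupL2}, applying Young's inequality to the cross terms, and absorbing $\|\bu-\bu_h\|_{0,\O}^2$ into $\|\bu-\bu_h\|_{0,\O}$ via boundedness of the error). Your added detail on why the $\L^2$ term may appear unsquared, and why $|\lambda-\lambda_h|^2$ must be retained on the right at this stage, is consistent with how the subsequent corollary disposes of these terms.
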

\begin{proof}
The proof  follows from Remark \ref{reamrk:ord2} and the Lemma \ref{lmm:estimateupL2}.
\end{proof}
\begin{corollary}
There exists $h_0$ such that, for all $h<h_0$, there holds
\begin{equation}\label{eq:Reliability1}
\|\bu-\bu_h\|_{1,\O}+\|p-p_h\|_{0,\O}+\|\bu-\bPi_0\bu_h\|_{0,\O}+\|\bu-\bPi\bu_h\|_{1,h}\lesssim \eta,
\end{equation}
and 
\begin{equation}
\label{eq:Reliability2}
|\lambda-\lambda_h|\lesssim \eta^2.
\end{equation}
\end{corollary}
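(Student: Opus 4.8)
The plan is to feed the three preceding estimates—Lemma~\ref{lmm:estimateupL2}, Theorem~\ref{The:L2up} and Lemma~\ref{lmm:bound_error_lambdas}—into a single coupled system in the energy-type error, the eigenvalue error and the $\L^2$ velocity error, and then to \emph{absorb} the higher-order and quadratic contributions into the left-hand side by taking $h_0$ small. Write $E:=\|\bu-\bu_h\|_{1,\O}+\|p-p_h\|_{0,\O}+\|\bu-\bPi_0\bu_h\|_{0,\O}+\|\bu-\bPi\bu_h\|_{1,h}$ for the quantity appearing on the left of \eqref{eq:Reliability1}. The preliminary observation that makes every absorption step legitimate is that, by the a priori convergence already established in Theorems~\ref{thm:error_spaces} and~\ref{cotadoblepandeo}, both $|\lambda-\lambda_h|$ and the mesh factor $h^s$ tend to $0$ as $h\to 0$, while $\eta$ remains bounded (indeed small) for small $h$.

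First I would establish \eqref{eq:Reliability1}. Lemma~\ref{lmm:estimateupL2} gives $E\lesssim\eta+|\lambda-\lambda_h|+\|\bu-\bu_h\|_{0,\O}$. Theorem~\ref{The:L2up} provides $\|\bu-\bu_h\|_{0,\O}\lesssim h^sE$, using $|\bu-\bPi\bu_h|_{1,h}\le\|\bu-\bPi\bu_h\|_{1,h}$ to identify its right-hand side with $E$; combining this with Lemma~\ref{lmm:bound_error_lambdas} yields $|\lambda-\lambda_h|\lesssim\eta^2+|\lambda-\lambda_h|^2+h^sE$. Since $|\lambda-\lambda_h|\to 0$, the quadratic term $|\lambda-\lambda_h|^2$ is absorbed, leaving $|\lambda-\lambda_h|\lesssim\eta^2+h^sE$. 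Substituting both bounds back into the estimate for $E$ produces $E\lesssim\eta+\eta^2+h^sE$, and for $h<h_0$ the self-referential term $h^sE$ is moved to the left. As $\eta^2\lesssim\eta$ for small $\eta$, this gives $E\lesssim\eta$, which is \eqref{eq:Reliability1}.

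The eigenvalue bound \eqref{eq:Reliability2} is then immediate, and I would derive it not from the linear $\L^2$ term in Lemma~\ref{lmm:bound_error_lambdas} (which would only yield $\eta^2+h^s\eta$) but directly from Remark~\ref{reamrk:ord2}: its right-hand side is precisely of the form $\lesssim E^2$, so $|\lambda-\lambda_h|\lesssim E^2\lesssim\eta^2$ once \eqref{eq:Reliability1} is in hand. I expect the genuine difficulty to reside entirely in the first part, namely in bookkeeping which terms are of higher order and in verifying that a single threshold $h_0$ makes all absorptions simultaneously valid. The delicate point is that $\|\bu-\bu_h\|_{0,\O}$ enters both Lemma~\ref{lmm:estimateupL2} and Lemma~\ref{lmm:bound_error_lambdas}, so Theorem~\ref{The:L2up} must be inserted into both places, and one must confirm that the resulting coefficient of $E$ (through $h^s$) and of $|\lambda-\lambda_h|$ (through $|\lambda-\lambda_h|$ itself) is strictly below $1$; the existence of such an $h_0$ is exactly what the a priori convergence guarantees.
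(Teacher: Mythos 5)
Your proposal is correct and follows essentially the same route as the paper: the paper defines the same aggregate quantity (its $\mathcal{Z}$ is your $E$), combines Lemma~\ref{lmm:estimateupL2} and Theorem~\ref{The:L2up} into the self-referential bound $\mathcal{Z}\lesssim\eta+h^s\mathcal{Z}$ and absorbs the $h^s\mathcal{Z}$ term for $h<h_0$, and then obtains \eqref{eq:Reliability2} from Lemma~\ref{lmm:bound_error_lambdas}, Remark~\ref{reamrk:ord2}, Theorem~\ref{The:L2up} and \eqref{eq:Reliability1}. The only cosmetic difference is that you reduce the eigenvalue error to $\eta^2+h^sE$ before substituting, so your last absorption uses $\eta^2\lesssim\eta$ (which requires the easily verified boundedness of $\eta$), while the paper keeps the quadratic contributions expressed in the error quantities themselves and absorbs them via the a priori convergence.
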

\begin{proof}
Let us define $$\mathcal{Z}:=\|\bu-\bu_h\|_{1,\O}+\|p-p_h\|_{0,\O}+\|\bu-\bPi_0\bu_h\|_{0,\O}\\+\|\bu-\bPi\bu_h\|_{1,h}.$$ Then,  from Lemmas \ref{The:L2up} and \ref{lmm:estimateupL2} we have 
\begin{equation*}
\mathcal{Z}\lesssim \eta+h^s\mathcal{Z}.
\end{equation*}
Therefore, it is easy to check that there exists $h_0 > 0$ such that for all $h<h_0$ \eqref{eq:Reliability1} holds true.
In order to prove \eqref{eq:Reliability2}, we note that from Lemma \ref{lmm:bound_error_lambdas}, Remark \ref{reamrk:ord2}, Lemma \ref{The:L2up} and \eqref{eq:Reliability1}, we have 
\begin{equation*}
(1-h^r)|\lambda-\lambda_h|\lesssim \mathcal{Z}^2+\|\bu-\bu_h\|_{0,\O}^2\lesssim  (1+h^s)\eta^2.
\end{equation*}
Hence, for $h$ small enough, \eqref{eq:Reliability2} holds true, concluding the proof.
\end{proof}
\subsection{Efficiency}
In what follows we will provide the efficiency bound for $\eta$. The forthcoming results are based in the
classic technique of bubble functions.
\begin{lemma}
\label{lmm:bound_R_1}
The following estimate holds
\begin{equation*}
\|\Upsilon_\E\|_{0,\E}\lesssim h_\E^{-1}\left(\Theta_\E+|\bu-\bu_h|_{1,\E}+\|p-p_h\|_{0,\E} +h_\E(\|\bu_h-\Pi_0^\E\bu_h\|_{0,\E}+\|\lambda\bu_h-\lambda_h\bu_h\|_{0,\E})\right),
\end{equation*}
where the hidden constant is independent of $h_\E$.
\end{lemma}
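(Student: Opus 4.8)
The plan is to use the classical interior--bubble technique on each polygon $\E$. Since $\bPi^{\E}\bu_h\in[\mathbb{P}_1(\E)]^2$ and $p_h\in\mathbb{P}_0(\E)$, the residual $\Upsilon_\E=\lambda_h\bPi_0^{\E}\bu_h+\nu\Delta\bPi^{\E}\bu_h-\nabla p_h$ is a vector polynomial on $\E$. Let $\psi_\E$ be the interior bubble function and set $\bw_\E:=\psi_\E\Upsilon_\E\in[\H_0^1(\E)]^2$, extended by zero to $\O$ so that $\bw_\E\in\bV$. By Lemma~\ref{burbujainterior} one has $\|\Upsilon_\E\|_{0,\E}^2\lesssim\int_\E\psi_\E|\Upsilon_\E|^2=\int_\E\Upsilon_\E\cdot\bw_\E$, together with the scalings $\|\bw_\E\|_{0,\E}\lesssim\|\Upsilon_\E\|_{0,\E}$ and $|\bw_\E|_{1,\E}\lesssim h_\E^{-1}\|\Upsilon_\E\|_{0,\E}$. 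Hence it suffices to bound $\int_\E\Upsilon_\E\cdot\bw_\E$ and then divide by $\|\Upsilon_\E\|_{0,\E}$.

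The key step is to rewrite $\int_\E\Upsilon_\E\cdot\bw_\E$ by confronting the residual with the \emph{continuous} weak equation, and \emph{not} by testing the discrete identity of Lemma~\ref{lmm:A_error} with $(\bw_\E,0)$: since here $\Upsilon_\E=\lambda_h\bPi_0^{\E}\bu_h$ coincides with the discrete mass contribution appearing in that identity, those terms cancel and yield no bound. Instead, because $\bPi^{\E}\bu_h$ and $p_h$ are genuine polynomials, I integrate by parts exactly (the boundary terms vanish as $\bw_\E\in[\H_0^1(\E)]^2$) to get $\int_\E\Upsilon_\E\cdot\bw_\E=\lambda_h c^\E(\bPi_0^{\E}\bu_h,\bw_\E)-\nu\int_\E\nabla\bPi^{\E}\bu_h:\nabla\bw_\E+\int_\E p_h\,\div\bw_\E$, while testing \eqref{eq:weak_stokes_eigen} with $\bw_\E$ gives $\lambda c^\E(\bu,\bw_\E)=\nu\int_\E\nabla\bu:\nabla\bw_\E-\int_\E p\,\div\bw_\E$. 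Subtracting these relations produces the clean expression
\begin{equation*}
\int_\E\Upsilon_\E\cdot\bw_\E=c^\E(\lambda_h\bPi_0^{\E}\bu_h-\lambda\bu,\bw_\E)-\nu\int_\E\nabla(\bPi^{\E}\bu_h-\bu):\nabla\bw_\E+\int_\E(p_h-p)\,\div\bw_\E.
\end{equation*}

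It then remains to estimate the three contributions by Cauchy--Schwarz. For the diffusive term I use $|\bPi^{\E}\bu_h-\bu|_{1,\E}\le|\bu-\bu_h|_{1,\E}+|\bu_h-\bPi^{\E}\bu_h|_{1,\E}$, controlling the last term by $\Theta_\E$ through the stability of $a_h^{\E}$ as in \eqref{eq:cotasupstba}; the pressure term is bounded by $\|p-p_h\|_{0,\E}\,|\bw_\E|_{1,\E}$. For the mass term I split $\lambda_h\bPi_0^{\E}\bu_h-\lambda\bu=\lambda_h(\bPi_0^{\E}\bu_h-\bu_h)+(\lambda_h-\lambda)\bu_h+\lambda(\bu_h-\bu)$, producing the factors $\|\bu_h-\bPi_0^{\E}\bu_h\|_{0,\E}$ and $\|\lambda\bu_h-\lambda_h\bu_h\|_{0,\E}$ (together with a velocity term $\|\bu-\bu_h\|_{0,\E}$ of higher order), each paired with $\|\bw_\E\|_{0,\E}$. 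Inserting the bubble scalings, the diffusive and pressure terms carry a factor $h_\E^{-1}\|\Upsilon_\E\|_{0,\E}$ whereas the mass terms carry $\|\Upsilon_\E\|_{0,\E}$; dividing by $\|\Upsilon_\E\|_{0,\E}$ and factoring out $h_\E^{-1}$ yields the claimed estimate. The main obstacle is exactly the first structural observation: one must sidestep the self-cancelling discrete identity and play the polynomial residual against the continuous equation, which is what makes the $h_\E^{-1}$-weighted energy terms and the $h_\E$-weighted $\L^2$ terms separate in the right way.
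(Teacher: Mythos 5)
Your proof is correct, and in substance it is the paper's proof: after regrouping, your ``clean'' three-term identity is literally the identity the paper works with. The paper tests the error identity of Lemma~\ref{lmm:A_error} with $(\bv,q)$, $\bv=\Upsilon_\E\psi_\E$; the jump terms drop because $\bv$ vanishes on $\partial\E$, the $q$-terms drop because $b^\E(\texttt{e}_{\bu},q)+b^\E(\bu_h,q)=b^\E(\bu,q)=0$ (the continuous incompressibility), and what remains is
\begin{multline*}
\int_\E\psi_\E|\Upsilon_\E|^2= a^\E(\texttt{e}_{\bu},\bv)+b^\E(\bv,\texttt{e}_p)+a^\E(\bu_h-\bPi^\E\bu_h,\bv)\\
-\lambda_h c^\E(\bu_h-\bPi_0^\E\bu_h,\bv)-\bigl[\lambda c^{\E}(\bu,\bv)-\lambda_hc^\E(\bu_h,\bv)\bigr].
\end{multline*}
Using $a^\E(\texttt{e}_{\bu},\bv)+a^\E(\bu_h-\bPi^\E\bu_h,\bv)=a^\E(\bu-\bPi^\E\bu_h,\bv)$, $b^\E(\bv,\texttt{e}_p)=\int_\E(p_h-p)\div\bv$, and $\lambda_hc^\E(\bu_h,\bv)-\lambda_hc^\E(\bu_h-\bPi_0^\E\bu_h,\bv)-\lambda c^\E(\bu,\bv)=c^\E(\lambda_h\bPi_0^\E\bu_h-\lambda\bu,\bv)$, this is exactly your identity, and both proofs then finish with the same Cauchy--Schwarz and bubble scalings, together with $|\bu_h-\bPi^\E\bu_h|_{1,\E}\lesssim\Theta_\E$.

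The one genuine error in your write-up is the structural claim used to justify bypassing Lemma~\ref{lmm:A_error}: that testing it with $(\bw_\E,0)$ makes ``those terms cancel and yield no bound.'' You are right that, for the lowest-order space, $\Delta\bPi^\E\bu_h=\0$ and $\nabla p_h=\0$, so $\Upsilon_\E=\lambda_h\bPi_0^\E\bu_h$ and the three terms $-\lambda_hc^\E(\bu_h,\bv)$, $+\lambda_hc^\E(\bu_h-\bPi_0^\E\bu_h,\bv)$, $+\int_\E\Upsilon_\E\cdot\bv$ sum exactly to zero. But the paper never exploits that cancellation: it groups $\lambda c^\E(\bu,\bv)-\lambda_hc^\E(\bu_h,\bv)$ as a single eigenpair-residual term (bounded by $\|\lambda\bu-\lambda_h\bu_h\|_{0,\E}\|\bv\|_{0,\E}$), keeps $\lambda_hc^\E(\bu_h-\bPi_0^\E\bu_h,\bv)$ separate, and isolates $\int_\E\psi_\E|\Upsilon_\E|^2$ on the left. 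Bounding a true identity term by term with the triangle inequality gives a true bound, hidden algebraic cancellations notwithstanding---and the bound it produces is precisely yours. So the obstacle you describe does not exist; your route (exact integration by parts of the polynomial residual played against the continuous equation) is a valid, self-contained re-derivation, while the paper's route has the advantage of reusing the one identity that also drives the reliability bound and the edge-jump efficiency estimate (Lemma~\ref{lmm:first_jump}). Finally, a bookkeeping point shared by your proof and the paper's: the mass term naturally yields $\|\lambda\bu-\lambda_h\bu_h\|_{0,\E}$ rather than $\|\lambda\bu_h-\lambda_h\bu_h\|_{0,\E}$; you correctly flag the residual piece $\lambda\|\bu-\bu_h\|_{0,\E}$ as higher order, which is how the paper treats it in the subsequent efficiency corollary.
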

\begin{proof}
Let $\psi_\E$ be as in Lemma \ref{burbujainterior}. Let us define the function $\bv:=\Upsilon_\E\psi_\E$. Notice that $\bv$ vanishes in the boundary of $\E$ and also, may be extended by zero to the whole domain $\O$, implying that $\bv\in[\H^1(\O)]^2$. Then, from Lemma \ref{lmm:A_error}, on each $\E\in\CT_h$, we have
\begin{multline}
\label{eq:indentity_bubble}
A((\texttt{e}_u,\texttt{e}_p),(\bv,q))=\lambda c^{\E}(\bu,\Upsilon_{\E}\psi_\E)-\lambda_hc^\E(\bu_h,\Upsilon_{\E}\psi_\E)\\
-b^\E(\bu_h,q)- a^\E(\bu_h-\bPi^\E\bu_h,\Upsilon_{\E}\psi_\E)
+ \lambda_h c^\E(\bu_h-\bPi_0^\E\bu_h,\Upsilon_{\E}\psi_\E)
+ \int_\E\Upsilon_\E^2\psi_{\E}.
\end{multline}
Now, from \eqref{eq:indentity_bubble}, the continuous incompressibility condition,  and Lemma \ref{burbujainterior} we obtain
\begin{multline*}
\|\Upsilon_\E\|_{0,\E}^2\lesssim \int_\E\psi_\E \Upsilon_\E^2=A^{\E}((\texttt{e}_u,\texttt{e}_p),(\Upsilon_\E\psi_\E,q))+a^\E(\bu_h-\bPi^\E\bu_h,\Upsilon_\E\psi_\E)\\
-\lambda_h c^\E(\bu_h-\bPi_0^\E\bu_h,\Upsilon_\E\psi_\E)+b^\E(\bu_h,q)-[\lambda c^{\E}(\bu,\Upsilon_\E\psi_\E)-\lambda_hc^\E(\bu_h,\Upsilon_\E\psi_\E)]\\
=a^\E(\texttt{e}_{\bu},\Upsilon_\E\psi_\E)+b^\E(\Upsilon_\E\psi_\E,\texttt{e}_p)-[\lambda c^{\E}(\bu,\Upsilon_\E\psi_\E)-\lambda_hc^\E(\bu_h,\Upsilon_\E\psi_\E)]\\
-a^\E(\bu_h-\bPi^\E\bu_h,\Upsilon_\E\psi_\E)- \lambda_h c^\E(\bu_h-\bPi_0^\E\bu_h,\Upsilon_\E\psi_\E)\\
\lesssim h_\E^{-1}(|\texttt{e}_{\bu}|_{1,\E} + \|\texttt{e}_p\|_{0,\E}+|\bu_h-\bPi^\E\bu_h|_{1,\E} \\
+h_\E(\|\bu_h-\bPi_0^\E\bu_h\|_{0,\E}+\|\lambda\bu-\lambda_h\bu_h\|_{0,\E})\|\Upsilon_\E\|_{0,\E}\\
\lesssim h_\E^{-1}(|\texttt{e}_{\bu}|_{1,\E} + \|\texttt{e}_p\|_{0,\E}+\Theta_\E+h_\E(\|\bu_h-\Pi_0^\E\bu_h\|_{0,\E}+\|\lambda\bu_h-\lambda_h\bu_h\|_{0,\E})),
\end{multline*}
where for the last estimate we used \eqref{eq:cotasupstba}. Then we concluding the proof.
\end{proof}
We now need a control of $\Theta_\E$.
\begin{lemma}
\label{lmm:bound:theta}
The following estimate holds
\begin{equation*}
\Theta_\E\lesssim \|\bu-\bu_h\|_{1,\E}+|\bu-\bPi^\E\bu_h|_{1,\E},
\end{equation*}
where the hidden constant is independent of $h_\E$.
\end{lemma}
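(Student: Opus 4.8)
The plan is to reduce the statement to the upper stability bound for the local discrete form $a_h^\E$ followed by a single triangle inequality; there is no genuine analytical difficulty here. First I would observe that $\bu_h-\bPi^\E\bu_h$ lies in $\bV_h^\E$, since $\bu_h\in\bV_h^\E$ and $\bPi^\E\bu_h\in[\mathbb{P}_1(\E)]^2\subset\bV_h^\E$. Hence the upper bound of the stability property of $a_h^\E(\cdot,\cdot)$ (cf. the consistency/stability properties listed in Subsection~\ref{subsec:disc_b_f}) applies to it, giving
\begin{equation*}
\Theta_\E^2=a_h^\E(\bu_h-\bPi^\E\bu_h,\bu_h-\bPi^\E\bu_h)\leq \alpha^* a^\E(\bu_h-\bPi^\E\bu_h,\bu_h-\bPi^\E\bu_h)=\alpha^*\nu\,|\bu_h-\bPi^\E\bu_h|_{1,\E}^2,
\end{equation*}
where I used the definition $a^\E(\bw,\bv)=\nu\int_\E\nabla\bw:\nabla\bv$. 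Thus $\Theta_\E\lesssim |\bu_h-\bPi^\E\bu_h|_{1,\E}$ with a hidden constant depending only on $\alpha^*$ and $\nu$, hence independent of $h_\E$.

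Second, I would insert the continuous eigenfunction $\bu$ and invoke the triangle inequality for the $H^1$-seminorm on $\E$, namely
\begin{equation*}
|\bu_h-\bPi^\E\bu_h|_{1,\E}\leq |\bu_h-\bu|_{1,\E}+|\bu-\bPi^\E\bu_h|_{1,\E}\leq \|\bu-\bu_h\|_{1,\E}+|\bu-\bPi^\E\bu_h|_{1,\E}.
\end{equation*}
Combining the two displays yields the claimed bound $\Theta_\E\lesssim \|\bu-\bu_h\|_{1,\E}+|\bu-\bPi^\E\bu_h|_{1,\E}$. The only point that requires (trivial) care is the membership of the argument in the discrete space, which guarantees that the stability inequality is legitimately applied; beyond that the estimate is immediate, so I do not expect any substantive obstacle in carrying out this lemma.
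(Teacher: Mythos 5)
Your proof is correct and follows essentially the same route as the paper: bound $\Theta_\E^2$ by $|\bu_h-\bPi^\E\bu_h|_{1,\E}^2$ and then apply the triangle inequality. The only cosmetic difference is that the paper first invokes the identity $a_h^\E(\bw_h,\bw_h)=S^\E(\bw_h,\bw_h)$ for $\bw_h=\bu_h-\bPi^\E\bu_h\in\ker(\bPi^\E)$ and then the bounds on $S^\E$, whereas you apply the upper stability bound $a_h^\E(\cdot,\cdot)\leq\alpha^*a^\E(\cdot,\cdot)$ directly, which is an equivalent (and slightly more direct) way to reach the same intermediate estimate.
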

\begin{proof}
According to \cite[Remark 3.2]{MR3340705}, the following identity  holds $$a_h^\E(\bu_h-\bPi^\E\bu_h,\bu_h-\bPi^\E\bu_h)=S^\E(\bu_h-\bPi^\E\bu_h,\bu_h-\bPi^\E\bu_h).$$
Then, from the definitions of $\Theta_\E$ and triangle inequality  we have
\begin{multline*}
\Theta_\E^2\lesssim |\bu_h-\bPi^\E\bu_h|_{1,\E}^2\leq  |\bu_h-\bu|_{1,\E}^2+|\bu-\bPi^\E\bu_h|_{1,\E}^2
\leq  \|\bu_h-\bu\|_{1,\E}^2+|\bu-\bPi^\E\bu_h|_{1,\E}^2,
\end{multline*}
which concludes the proof.
\end{proof}
Now our aim is to control the term associated to the jumps.
\begin{lemma}\label{lmm:first_jump}
The following estimate hold
\begin{multline*}
h_{\E}^{1/2}\|J_\ell\|_{0,\ell}\lesssim \sum_{\E'\in\omega_{\ell}}\left(\|\texttt{e}_{\bu}\|_{1,\E'}+\|\texttt{e}_p\|_{0,\E'}+|\bu-\bPi^{\E'}\bu_h|_{1,\E'}\right.\\
\left.+h_{\E'}\|\lambda\bu-\lambda_h\bu_h\|_{0,\E'}+h_{\E'}\|\bu_h-\bPi_0^{\E'}\bu_h\|_{0,\E'}\right),
\end{multline*}
where $\omega_{\ell}:=\{\E'\in\CT_h: \ell\in \mathcal{E}_{\E'}\}$ and the hidden constant is independent of $h_{\E}$.
\end{lemma}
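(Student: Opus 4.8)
The plan is to use the classical edge-bubble technique, taking as starting point the residual identity of Lemma~\ref{lmm:A_error}. Fix an interior edge $\ell\in\CE_\O$ shared by the two elements of $\omega_\ell$, let $\psi_\ell$ be the associated edge bubble function, and extend the polynomial $J_\ell$ from $\ell$ to each $\E'\in\omega_\ell$ as in Remark~\ref{extencion}. The test pair is $(\bv,q):=(J_\ell\psi_\ell,0)$. Since $\psi_\ell$ vanishes on $\partial\E'$ away from $\ell$ and on every element not containing $\ell$, the function $\bv$ is supported in $\omega_\ell$ and vanishes on all edges other than $\ell$; in particular it extends by zero to an $[\H^1(\O)]^2$ function. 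By the first estimate of Lemma~\ref{burbuja} one has $\|J_\ell\|_{0,\ell}^2\lesssim\int_\ell\psi_\ell J_\ell^2=\int_\ell J_\ell\,\bv$, so it suffices to bound $\int_\ell J_\ell\,\bv$ from above.

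Next I would solve the identity of Lemma~\ref{lmm:A_error} for the jump contribution. Because $\bv$ vanishes on every edge but $\ell$ and is supported in $\omega_\ell$, the global edge double-sum collapses (up to a harmless constant) to $\int_\ell J_\ell\,\bv$, and only the element terms indexed by $\E'\in\omega_\ell$ survive, giving
\begin{align*}
\int_\ell J_\ell\,\bv &= \sum_{\E'\in\omega_\ell}\Big[A^{\E'}((\texttt{e}_{\bu},\texttt{e}_p),(\bv,0)) - c^{\E'}(\lambda\bu-\lambda_h\bu_h,\bv) \\
&\quad + a^{\E'}(\bu_h-\bPi^{\E'}\bu_h,\bv) - \lambda_h c^{\E'}(\bu_h-\bPi_0^{\E'}\bu_h,\bv) - \int_{\E'}\Upsilon_{\E'}\cdot\bv\Big].
\end{align*}
Each term is then controlled by Cauchy--Schwarz together with the scaling estimates of Lemma~\ref{burbuja}, namely $\|\bv\|_{0,\E'}\lesssim h_{\E'}^{1/2}\|J_\ell\|_{0,\ell}$ and $|\bv|_{1,\E'}\lesssim h_{\E'}^{-1/2}\|J_\ell\|_{0,\ell}$. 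The $A^{\E'}$-term (with $A^{\E'}((\texttt{e}_{\bu},\texttt{e}_p),(\bv,0))=a^{\E'}(\texttt{e}_{\bu},\bv)+b^{\E'}(\bv,\texttt{e}_p)$ and $|b^{\E'}(\bv,\texttt{e}_p)|\le\|\texttt{e}_p\|_{0,\E'}|\bv|_{1,\E'}$) produces $h_{\E'}^{-1/2}(|\texttt{e}_{\bu}|_{1,\E'}+\|\texttt{e}_p\|_{0,\E'})\|J_\ell\|_{0,\ell}$; the two mass terms produce $h_{\E'}^{1/2}(\|\lambda\bu-\lambda_h\bu_h\|_{0,\E'}+\|\bu_h-\bPi_0^{\E'}\bu_h\|_{0,\E'})\|J_\ell\|_{0,\ell}$; and the consistency term produces $h_{\E'}^{-1/2}|\bu_h-\bPi^{\E'}\bu_h|_{1,\E'}\|J_\ell\|_{0,\ell}$.

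The interior residual term is the one that needs the auxiliary results already established: I would bound $\int_{\E'}\Upsilon_{\E'}\cdot\bv\lesssim\|\Upsilon_{\E'}\|_{0,\E'}\,h_{\E'}^{1/2}\|J_\ell\|_{0,\ell}$ and then invoke Lemma~\ref{lmm:bound_R_1}, so that $h_{\E'}\|\Upsilon_{\E'}\|_{0,\E'}$ is controlled by $\Theta_{\E'}+|\bu-\bu_h|_{1,\E'}+\|p-p_h\|_{0,\E'}+h_{\E'}(\|\bu_h-\bPi_0^{\E'}\bu_h\|_{0,\E'}+\|\lambda\bu_h-\lambda_h\bu_h\|_{0,\E'})$. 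The remaining stabilization quantity $\Theta_{\E'}$, and the term $|\bu_h-\bPi^{\E'}\bu_h|_{1,\E'}$ (equal to $\Theta_{\E'}$ up to the stability constants via \eqref{eq:cotasupstba}), are absorbed through Lemma~\ref{lmm:bound:theta} into $\|\bu-\bu_h\|_{1,\E'}+|\bu-\bPi^{\E'}\bu_h|_{1,\E'}$. Collecting every contribution yields $\int_\ell J_\ell\,\bv\lesssim\big(\sum_{\E'\in\omega_\ell}[\,\cdots]\big)\|J_\ell\|_{0,\ell}$; combining this with $\|J_\ell\|_{0,\ell}^2\lesssim\int_\ell J_\ell\,\bv$, cancelling one factor $\|J_\ell\|_{0,\ell}$ and multiplying by $h_\E^{1/2}$ (using $h_\E\simeq h_{\E'}$ on the patch, by assumptions \textbf{A1}--\textbf{A2}) gives the claimed bound. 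The main obstacle is precisely this reduction of the global residual identity to the patch $\omega_\ell$ --- ensuring the edge double-sum collapses exactly to the single edge $\ell$ so the right-hand side depends only on local quantities --- together with the careful bookkeeping of the $h_{\E'}^{\pm1/2}$ scalings, which must cancel to leave the $h_{\E'}$-weights exactly as stated.
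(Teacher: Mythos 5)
Your proposal is correct and follows essentially the same route as the paper's proof: testing the residual identity of Lemma~\ref{lmm:A_error} with the edge-bubble function $J_\ell\psi_\ell$ extended by zero, bounding each surviving patch term via Cauchy--Schwarz and the scalings of Lemma~\ref{burbuja}, and controlling the interior residual through Lemma~\ref{lmm:bound_R_1}. The only (harmless) differences are that you take $q=0$ outright where the paper carries a generic $q$ and then uses $b(\bu,q)=0$, and that you make explicit the collapse of the edge double-sum and the equivalence $h_\E\simeq h_{\E'}$ on the patch, both of which the paper leaves implicit.
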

\begin{proof}
Let $\ell\in\mathcal{E}_K\cap\mathcal{E}_{\O}$. Let us define $\bv:=J_\ell\psi_\ell$ where $\psi_\ell$ is the bubble function satisfying the properties of Lemma \ref{burbuja}. Observe that $\bv$ may be extended by zero to the whole domain $\O$. For simplicity, we denote by $\bv$ such an extension. Observe that $\bv\in \bV$.

Invoking Lemma \ref{lmm:A_error}, the following identity holds
\begin{multline*}
A((\texttt{e}_{\bu},\texttt{e}_p),(\bv,q))=\lambda c(\bu,J_\ell\psi_\ell)-\lambda_hc(\bu_h,J_\ell\psi_\ell)-b(\bu_h,q)\\
+\sum_{\E'\in \omega_{\ell}}\left(\lambda_hc^{\E'}(\bu_h-\bPi_0^\E\bu_h,J_\ell\psi_\ell)-a^{\E'}(\bu_h-\bPi^\E\bu_h,J_\ell\psi_\ell)\right)\\
+\sum_{\E'\in \omega_{\ell}}\left(\int_{\E'}\Upsilon_{\E'}\cdot(J_\ell\psi_\ell)+\int_\ell J_\ell^2\psi_l\right).
\end{multline*}
Then we have
\begin{multline}\label{eq:estimate_jumpup}
\|J_{\ell}\|_{0,\ell}^2\lesssim\int_{\ell}J_{\ell}^2\psi_{\ell}=\sum_{\E'\in \omega_{\ell}}(a^{\E'}(\texttt{e}_{\bu},J_{\ell}\psi_{\ell})+b^{\E'}(J_{\ell}\psi_{\ell},\texttt{e}_p)+b(\texttt{e}_{\bu},q)+b(\bu_h,q)\\
-[\lambda c(\bu,J_{\ell}\psi_{\ell})-\lambda_hc(\bu_h,J_{\ell}\psi_{\ell})]-\sum_{\E'\in \omega_{\ell}}\lambda_h c^{\E'}(\bu_h-\bPi_0^{\E}\bu_h,J_{\ell}\psi_{\ell})\\
+\sum_{\E'\in \omega_{\ell}}a^{\E'}(\bu_h-\bPi^{\E}\bu_h,J_{\ell}\psi_{\ell})-\sum_{\E'\in \omega_{\ell}}\int_{\E'}\Upsilon_{\E'}\cdot(J_{\ell}\psi_{\ell})\\
\lesssim |\texttt{e}_{\bu}|_{1,\E'}|J_{\ell}\psi_{\ell}|_{1,\E'}+\|\texttt{e}_p\|_{0,\E'}|J_{\ell}\psi_{\ell}|_{1,\E'}+\|\lambda\bu-\lambda_h\bu_h\|_{0,\E'}\|J_{\ell}\psi_{\ell}\|_{0,\E'}\\
+\|\bu_h-\bPi_0^{\E'}\bu_h\|_{0,\E'}\|J_{\ell}\psi_{\ell}\|_{0,\E'}+|\bu_h-\bPi^{\E'}\bu_h|_{1,{\E}'}|J_{\ell}\psi_{\ell}|_{1,\E'}+\|\Upsilon_{\E'}\|_{0,\E'}\|J_{\ell}\psi_{\ell}\|_{0,\E'},
\end{multline}
where for the last estimate we have used that $b(\bu,q)=0$ for all $q\in\Q$. 

On the other hand notice that from Lemma \ref{burbuja} we have
\begin{multline*}
|\texttt{e}_{\bu}|_{1,\E'}|J_{\ell}\psi_{\ell}|_{1,\E'}+\|\texttt{e}_p\|_{0,\E'}|J_{\ell}\psi_{\ell}|_{1,\E'}\lesssim (|\texttt{e}_{\bu}|_{1,\E'}+\|\texttt{e}_p\|_{0,\E'})h_{\E'}^{-1/2}\|J_{\ell}\|_{0,\ell};\\
\|\lambda\bu-\lambda_h\bu_h\|_{0,\E'}\|J_{\ell}\psi_{\ell}\|_{0,\E'}\lesssim h_{\E'}^{1/2}\|\lambda\bu-\lambda_h\bu_h\|_{0,\E'}\|J_{\ell}\|_{0,\ell};\\
\|\bu_h-\bPi_0^{\E'}\bu_h\|_{0,\E'}\|J_{\ell}\psi_{\ell}\|_{0,\E'}+|\bu_h-\bPi^{\E'}\bu_h|_{1,\E'}|J_{\ell}\psi_{\ell}|_{1,\E'}\\
\lesssim (h_{\E'}\|\bu_h-\Pi_0^{\E'}\bu_h\|_{0,\E'}+\Theta_{\E'})h_{\E}^{-1/2}\|J_{\ell}\|_{0,\ell};\\
\|\Upsilon_{\E'}\|_{0,\E'}\|J_{\ell}\psi_{\ell}\|_{0,\E'}\lesssim h_{\E'}^{1/2}\|\Upsilon_{\E'}\|_{0,\E'}\|J_{\ell}\|_{0,\ell}\\
\lesssim h_{\E'}^{-1/2}(|\bu-\bu_h|_{1,\E'}+\|p-p_h\|_{0,\E'}+\Theta_{\E'}+h_{\E'}\|\lambda\bu_h-\lambda_h\bu_h\|_{0,\E'}).
\end{multline*}
The proof is concluded by combining \eqref{eq:estimate_jumpup} and the above estimates.


\end{proof}
\noindent Now we are in a position to prove the efficiency of our local error indicator $\eta_\E.$
\begin{lemma}[Local efficiency]
\label{lmm:upper_bound1}
The following upper bound for the local indicator $\eta_\E$ holds
\begin{multline*}
\eta_\E^2 \lesssim\sum_{\E'\in\omega_\E}\left(\|\bu-\bu_h\|_{1,\E'}^2+\|p-p_h\|_{0,\E'}^2+|\bu-\bPi^{\E'}\bu_h|_{1,\E'}^2\right.\\
\left.+\|\bu-\bPi_0^{\E'}\bu_h\|_{0,\E'}^2+h_{\E'}^2\|\lambda\bu-\lambda_h\bu_h\|_{0,\E'}\right),
\end{multline*}
where the $\omega_{\E}:=\{\E'\in\CT_h :\E' \,\text{and $\E$ share an edge}\}$.
\end{lemma}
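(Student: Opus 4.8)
The plan is to assemble the claimed local bound directly from the three preceding estimates, so that this proof is essentially bookkeeping: the genuinely analytic work (the bubble-function testing) has already been done in Lemmas~\ref{lmm:bound_R_1}, \ref{lmm:bound:theta} and \ref{lmm:first_jump}. Starting from the definition $\eta_\E^2=\Theta_\E^2+R_\E^2+\sum_{\ell\in\CE_\E}h_\E\|J_\ell\|_{0,\ell}^2$, I would bound the three summands separately and then collect the resulting terms over the edge patch $\omega_\E$.

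First I would dispatch $\Theta_\E^2$ by a direct appeal to Lemma~\ref{lmm:bound:theta}, which gives $\Theta_\E\lesssim\|\bu-\bu_h\|_{1,\E}+|\bu-\bPi^\E\bu_h|_{1,\E}$; squaring produces two of the terms of the right-hand side with $\E'=\E$. For the residual contribution I read $R_\E^2$ as $h_\E^2\|\Upsilon_\E\|_{0,\E}^2$ and insert the bound of Lemma~\ref{lmm:bound_R_1}. After squaring, the weight $h_\E^2$ exactly cancels the $h_\E^{-1}$ (squared) prefactor, leaving $\Theta_\E^2$, $|\bu-\bu_h|_{1,\E}^2$, $\|p-p_h\|_{0,\E}^2$, together with the lower-order terms $h_\E^2\|\bu_h-\bPi_0^\E\bu_h\|_{0,\E}^2$ and $h_\E^2\|\lambda\bu_h-\lambda_h\bu_h\|_{0,\E}^2$; the surviving $\Theta_\E^2$ is reabsorbed using the bound just obtained from Lemma~\ref{lmm:bound:theta}.

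For the jump term I would write $\sum_{\ell\in\CE_\E}h_\E\|J_\ell\|_{0,\ell}^2=\sum_{\ell\in\CE_\E}\bigl(h_\E^{1/2}\|J_\ell\|_{0,\ell}\bigr)^2$ and apply Lemma~\ref{lmm:first_jump} edge by edge; squaring that estimate and summing over $\ell\in\CE_\E$ produces exactly the same five types of terms, now summed over the neighbouring elements $\E'\in\omega_\ell$. Since every $\ell\in\CE_\E$ satisfies $\omega_\ell\subset\omega_\E$ and, by assumptions {\bf A1}--{\bf A2}, the number of edges per element is uniformly bounded, the Cauchy--Schwarz step in passing from a sum to a sum of squares costs only an $h$-independent constant and the double sum collapses into a single sum over the patch $\omega_\E$.

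The only real care needed, which I expect to be the main (and mild) obstacle, is to replace the \emph{discrete} quantities produced by the lemmas, namely $\|\bu_h-\bPi_0^{\E'}\bu_h\|_{0,\E'}$ and $\|\lambda\bu_h-\lambda_h\bu_h\|_{0,\E'}$, by the \emph{error} quantities appearing in the statement, and to keep the patch accounting honest so that no term is double counted and every overlap is absorbed with an $h$-independent constant. For the former I would insert $\bu$ by the triangle inequality, $\|\bu_h-\bPi_0^{\E'}\bu_h\|_{0,\E'}\le\|\bu-\bu_h\|_{0,\E'}+\|\bu-\bPi_0^{\E'}\bu_h\|_{0,\E'}$, and control $\|\bu-\bu_h\|_{0,\E'}\le\|\bu-\bu_h\|_{1,\E'}$; for the latter, $\|\lambda\bu_h-\lambda_h\bu_h\|_{0,\E'}\le\lambda\|\bu-\bu_h\|_{1,\E'}+\|\lambda\bu-\lambda_h\bu_h\|_{0,\E'}$. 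As all these additional $h_{\E'}$-weighted contributions are of lower order relative to the $\H^1$ error, the final collection over $\omega_\E$ yields precisely the stated bound.
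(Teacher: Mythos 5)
Your proposal is correct and takes essentially the same route as the paper: the paper's own proof consists of the single line that the result follows immediately from Lemmas~\ref{lmm:bound_R_1}--\ref{lmm:first_jump}, which is precisely the assembly you carry out. Your additional bookkeeping (cancellation of the $h_\E$ weights, the triangle inequalities converting $\|\bu_h-\bPi_0^{\E'}\bu_h\|_{0,\E'}$ and $\|\lambda\bu_h-\lambda_h\bu_h\|_{0,\E'}$ into the error quantities of the statement, and the patch accounting $\omega_\ell\subset\omega_\E$) simply makes explicit what the paper leaves implicit.
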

\begin{proof}
The result follows immediately from Lemmas \ref{lmm:bound_R_1}--\ref{lmm:first_jump}.
\end{proof}
The next step is to establish that the term $h_{\E}\|\lambda \bu-\lambda\bu_h\|_{0,\E'}$ appearing in the above estimation is asymptotically negligible for the overall estimator.
\begin{corollary}
The following estimate holds
\begin{equation*}
\eta^2\lesssim \|\bu-\bu_h\|_{1,\O}^2+\|p-p_h\|_{0,\O}^2+|\bu-\bPi\bu_h|_{1,h}^2+\|\bu-\bPi_0\bu_h\|_{0,\O}^2,
\end{equation*}
where the hidden constant is independent of $h$.
\end{corollary}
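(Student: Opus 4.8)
The plan is to derive the global efficiency bound from the local one of Lemma~\ref{lmm:upper_bound1} by summation, and then to reabsorb the single higher-order contribution that the local estimate carries. First I would use the mesh assumptions \textbf{A1} and \textbf{A2} to note that the edge-sharing patches $\omega_\E$ have uniformly bounded cardinality and, dually, that each element belongs to only a uniformly bounded number of such patches. Therefore, summing the local bound of Lemma~\ref{lmm:upper_bound1} over all $\E\in\CT_h$ costs only a fixed constant, and setting
\[
\mathcal{Z}^2:=\|\bu-\bu_h\|_{1,\O}^2+\|p-p_h\|_{0,\O}^2+|\bu-\bPi\bu_h|_{1,h}^2+\|\bu-\bPi_0\bu_h\|_{0,\O}^2,
\]
I obtain
\[
\eta^2=\sum_{\E\in\CT_h}\eta_\E^2\lesssim \mathcal{Z}^2+\sum_{\E\in\CT_h}h_\E^2\|\lambda\bu-\lambda_h\bu_h\|_{0,\E}^2 .
\]

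The remaining task, as announced before the statement, is to show that the last sum is asymptotically negligible. I would split it by the triangle inequality as $\|\lambda\bu-\lambda_h\bu_h\|_{0,\E}\le \lambda\|\bu-\bu_h\|_{0,\E}+|\lambda-\lambda_h|\,\|\bu_h\|_{0,\E}$. Using that $h_\E\le h$ is bounded and that $\bu_h$ is bounded in $\L^2(\O)$, this sum is controlled by $h^2\bigl(\|\bu-\bu_h\|_{0,\O}^2+|\lambda-\lambda_h|^2\bigr)$. Since $\|\bu-\bu_h\|_{0,\O}\le\|\bu-\bu_h\|_{1,\O}$, the velocity part is already dominated by $\mathcal{Z}^2$ without even needing the $h^2$ factor.

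For the eigenvalue part the decisive ingredient is Remark~\ref{reamrk:ord2}, which precisely gives $|\lambda-\lambda_h|\lesssim \mathcal{Z}^2$. Consequently $h^2|\lambda-\lambda_h|^2\lesssim h^2\mathcal{Z}^4\lesssim \mathcal{Z}^2$, using that $h$ is bounded and that $\mathcal{Z}$ is uniformly bounded (so $\mathcal{Z}^4\lesssim\mathcal{Z}^2$). Combining these estimates yields $\eta^2\lesssim \mathcal{Z}^2+h^2\mathcal{Z}^2\lesssim \mathcal{Z}^2$ with a constant independent of $h$, which is exactly the claimed bound.

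The only genuinely delicate point is the handling of the extra term $h_\E^2\|\lambda\bu-\lambda_h\bu_h\|_{0,\E}^2$, which is not of the same order as the error and would otherwise spoil efficiency. The mechanism that rescues the argument is that its eigenvalue component is, via Remark~\ref{reamrk:ord2}, quadratic in the error, so that after multiplication by $h^2$ it becomes a genuinely higher-order quantity that can be reabsorbed into $\mathcal{Z}^2$; everything else reduces to routine patch-summation and the triangle inequality.
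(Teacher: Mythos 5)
Your proposal is correct and follows essentially the same route as the paper: sum the local bound of Lemma~\ref{lmm:upper_bound1}, split $\|\lambda\bu-\lambda_h\bu_h\|_{0,\O}$ by the triangle inequality, absorb the velocity part into the $\H^1$ error, and use Remark~\ref{reamrk:ord2} to control the eigenvalue part. The only (immaterial) difference is how the extra power of $|\lambda-\lambda_h|$ is removed: you write $|\lambda-\lambda_h|^2\lesssim\mathcal{Z}^4\lesssim\mathcal{Z}^2$ using the uniform boundedness of $\mathcal{Z}$, while the paper bounds one factor by $|\lambda|+|\lambda_h|$ and applies Remark~\ref{reamrk:ord2} to the other--both are the same ``boundedness kills the extra power'' argument.
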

\begin{proof}
From Lemma \ref{lmm:upper_bound1} we have 
\begin{multline*}
\eta^2\lesssim \|\bu-\bu_h\|_{1,\O}^2+\|p-p_h\|_{0,\O}^2+|\bu-\bPi\bu_h|_{1,h}^2
+\|\bu-\bPi_0\bu_h\|_{0,\O}^2+h^2\|\lambda \bu-\lambda\bu_h\|_{0,\O}.
\end{multline*}
Now, for the last term, taking into account that $\|\bu_h\|_{0,\O}=1$, the following estimate holds
\begin{equation*}
\|\lambda\bu-\lambda_h\bu\|_{0,\O}^2\leq 2\lambda^2\|\bu-\bu_h\|_{0,\O}^2+2|\lambda-\lambda_h|^2\lesssim \|\bu-\bu_h\|_{1,\O}^2+2|\lambda-\lambda_h|^2.
\end{equation*}
Invoking Remark \ref{reamrk:ord2}, we have
\begin{equation*}
|\lambda-\lambda_h|^2\leq(|\lambda|+|\lambda_h|)|\lambda-\lambda_h| \|\bu-\bu_h\|_{1,\O}^2+\|p-p_h\|_{0,\O}^2
+ |\bu-\bPi\bu_h|_{1,h}^2+\|\bu-\bPi_0\bu_h\|_{0,\O}^2,
\end{equation*}
and hence
\begin{equation*}
\eta^2\lesssim \|\bu-\bu_h\|_{1,\O}^2+\|p-p_h\|_{0,\O}^2+|\bu-\bPi\bu_h|_{1,h}^2+\|\bu-\bPi_0\bu_h\|_{0,\O}^2,
\end{equation*}
which concludes the proof.

\end{proof}

\section{Numerical experiments}
\label{sec:numerics}
In this section, we report some numerical experiments to validate the theoretical estimates and performance of the proposed scheme on convex, and non-convex domain generally appeared in practical application. In order to remain simple, along this section we will consider as kinematic viscosity $\nu=1$.  We propose first three numerical tests to verify the optimal order of convergence, i.e., $\mathcal{O}(h^{2r})$ of the eigenvalues on convex domain and robustness of the proposed scheme on a general shaped domain consists of holes and re-entrant angles. Additionally, we assess the influence of the stabilizer in the computation of eigenvalues in the fourth example. Moreover, we have considered a non-convex domain ($\L$ shaped) with singularity as a final benchmark example to judge the estimators experimentally. The presence of singularity affects the convergence order of the eigenvalues (see \cite{MR2473688,M1402959} for instance). In view of this fact, we have proposed a suitable residual based estimator to retrieve the optimal order of convergence by refining the area with spurious oscillation. Further, we discretize the computational domain with different type of elements such as square, voronoi, nonconvex, and remapped polygons as shown in Figure~\ref{FIG:meshes}. Generally, by employing the standard basis of the virtual space, the discrete bilinear forms~\eqref{eq:weak_stokes_eigen_disc} leads to generalized matrix eigenvalue problem of the layout
\begin{equation*}
\AT  \bold{U}_h=\lambda_h \BT \bold{U}_h,
\end{equation*}
where $\bold{U}_h:=(\bu_h,p_h)^{\texttt{t}}$.  We remark that matrix $\AT$ is symmetric and positive definite whereas $\BT$ is symmetric and semipositive definite. Therefore, we have solved using the  MATLAB command \texttt{eigs}, the following equivalent problem
\begin{equation*}
\BT \bold{U}_h=\frac{1}{\lambda_h} \AT \bold{U}_h.
\end{equation*}
For better presentation of the article, we denote by $N$ the number of elements on each side of the domain.

\begin{figure}[H]
	\begin{center}
		\begin{minipage}{13cm}
			\centering\includegraphics[height=3cm, width=3cm]{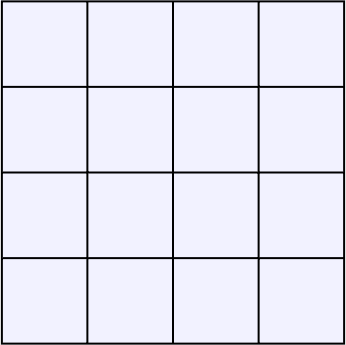}\hspace{.5 cm}
			\centering\includegraphics[height=3cm, width=3cm]{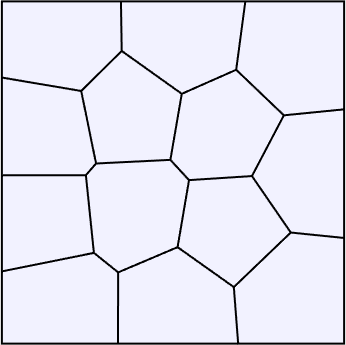}\hspace{.5 cm}
                          \centering\includegraphics[height=3cm, width=3cm]{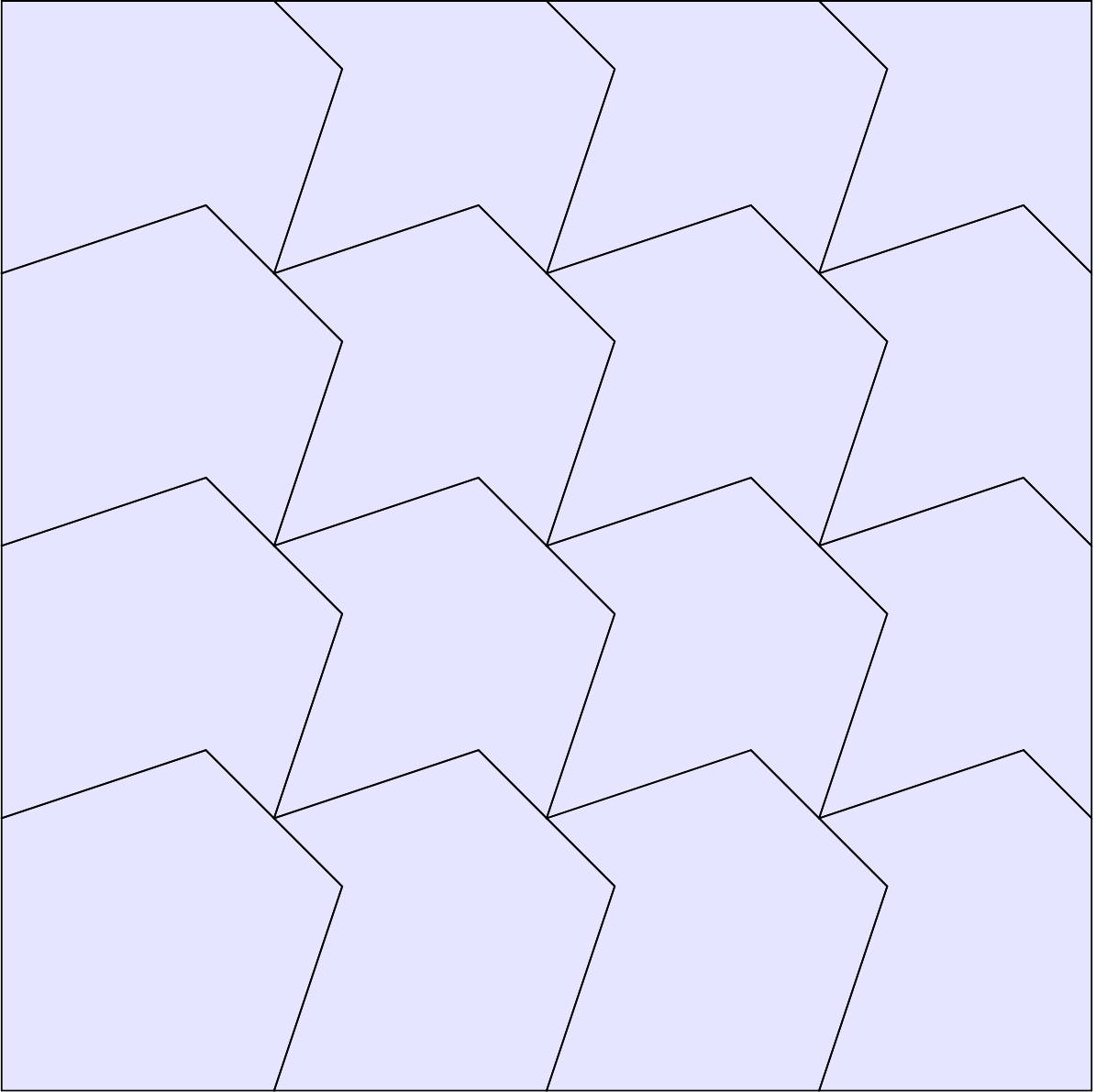}\\
                          \vspace{.5 cm}
                          \centering\includegraphics[height=3cm, width=3cm]{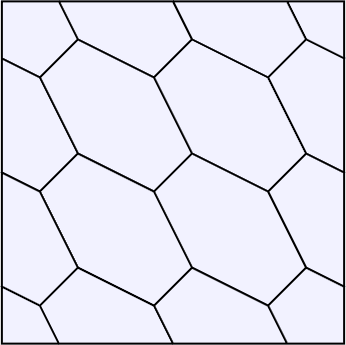} \hspace{.5 cm}
                          \centering\includegraphics[height=3cm, width=3cm]{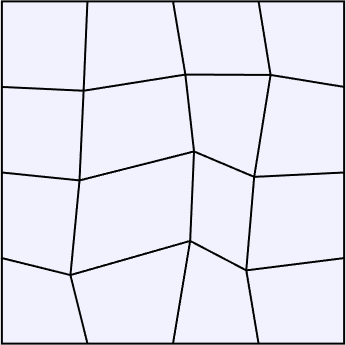}
                   \end{minipage}
		\caption{Schematic diagram of sample meshes. From top left to bottom right: $\CT_h^1$, $\CT_h^2$, $\CT_h^3$, $\CT_h^4$, and $\CT_h^5$ respectively, with $N=4.$ 
}
		\label{FIG:meshes}
	\end{center}
\end{figure}

%

\subsection{Clamped square.} 
In this test, we have carried out the experiments by considering the computational domain as $\O:=(-1,1)^2$, with clamped boundary conditions. The zero mean condition on pressure variable, i.e., $\int_{\O} p=0$ has been 
incorporated to the system as a Lagrange multiplier. We have computed the  four lowest eigenvalues, which are reported  in Table~\ref{TABLA:clamped:plate} for different meshes. Further, we emphasize that our domain is convex in shape, and consequently the eigenfunctions are smooth enough which leads to optimal quadratic order of convergence ({\it Lowest order VEM space}) for the eigenvalues as reported in Table~\ref{TABLA:clamped:plate}. The  computed pressure and velocity fields associated with third lowest eigenvalues are displayed in Figure~\ref{FIG:clampedPlate}. 


\begin{table}[H]
\begin{center}
\caption{Test 1: Lowest eigenvalues $\l_{h}^{(i)}$, $1\le
i\le4$ of clamped square on different meshes}
\begin{tabular}{|c||c||ccc||c||c||c|c|c }
\hline
$\CT_h$   & $\l_{h}^{(i)}$  & $N=16$ & $N=32$ & $N=64$ &
Order & Extr.&\cite{MR2473688} \\
\hline
\hline
& $\l_{h}^{(1)}$  & 13.0937  & 13.0883  & 13.0870 &   2.05  & 13.0865&13.086\\
& $\l_{h}^{(2)}$   & 22.9910 &  23.0219 &  23.0300 &   1.95 &  23.0327&23.031\\
$\CT_h^1$ & $\l_{h}^{(3)}$   & 22.9910  & 23.0219 &  23.0300  & 1.95  & 23.0327&23.031\\
& $\l_{h}^{(4)}$   & 31.7954  & 32.0009 &  32.0452   & 2.22  & 32.0572&32.053\\
\hline
\hline
& $\l_{h}^{(1)}$ &  13.0749   &13.0830 &  13.0851   & 1.95  & 13.0858&13.086\\
& $\l_{h}^{(2)}$  &  22.9788  & 23.0185 &  23.0284   & 2.02  & 23.0316&23.031\\
$\CT_h^2$& $\l_{h}^{(3)}$     &22.9832  & 23.0202   &23.0287   & 2.14  & 23.0311&23.031\\
& $\l_{h}^{(4)}$    & 31.9339   &32.0161 &  32.0364   & 2.02 & 32.0431 &32.053\\
  \hline
  \hline
& $\l_{h}^{(1)}$ &  13.0780   &13.0835  & 13.0847 &   2.17  & 13.0850&13.086\\
& $\l_{h}^{(2)}$  &  22.9708  & 23.0195&   23.0301 &   2.20 &  23.0330&23.031\\
$\CT_h^3$& $\l_{h}^{(3)}$     & 22.9850 &  23.0212 &  23.0301 &   2.02  & 23.0330&23.031\\
& $\l_{h}^{(4)}$   & 31.8912  & 32.0153 &  32.0420  &  2.21  & 32.0495&32.053\\
\hline
\hline
& $\l_{h}^{(1)}$  & 13.5218   &13.1750 &  13.0993   & 2.19 & 13.0778&13.086\\
& $\l_{h}^{(2)}$   &  24.2011  & 23.2899 &  23.0791  &  2.11 &  23.0154&23.031\\
$\CT_h^4$ & $\l_{h}^{(3)}$  &  24.2136  & 23.2910 &  23.0499  &  1.94  & 22.9653 &32.031\\
 & $\l_{h}^{(4)}$  &  32.2218  & 32.1257 &  32.1011  &  1.97  & 32.0927&32.053\\
\hline
 \end{tabular}
\label{TABLA:clamped:plate}
\end{center}
\end{table}
 Moreover, 
the computed extrapolated eigenvalues of our method are in accordance with those available in the literature (cf. \cite{MR2473688}) irrespective of the shape of the elements used for domain discretization. This experiment asserts that our method is a challenging competitor with the existing techniques available in the literature.

\begin{figure}[H]
	\begin{center}
		\begin{minipage}{11cm}
			 \centering\includegraphics[height=3.8cm, width=4.4cm]{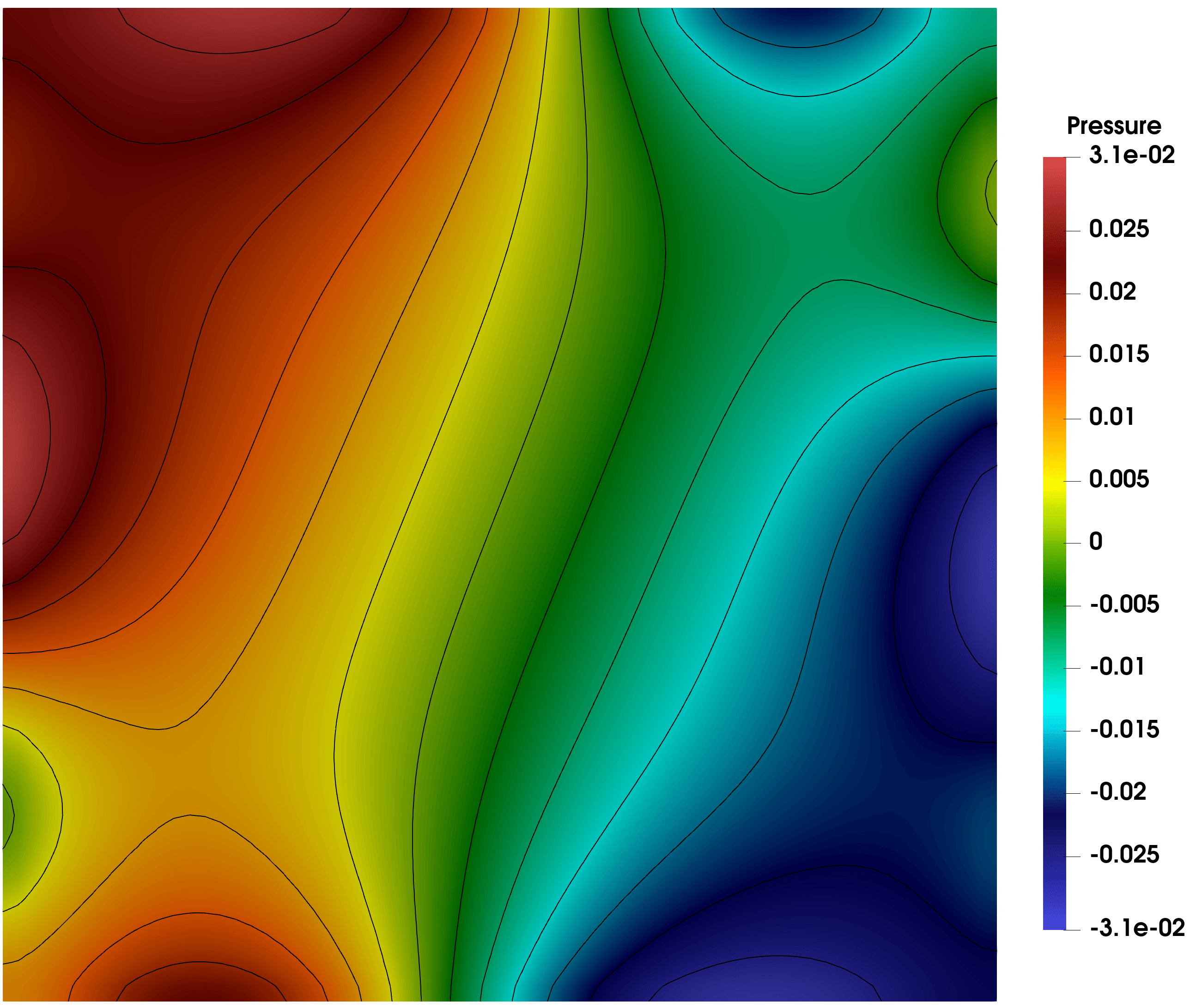}
                          \centering\includegraphics[height=3.8cm, width=4.4cm]{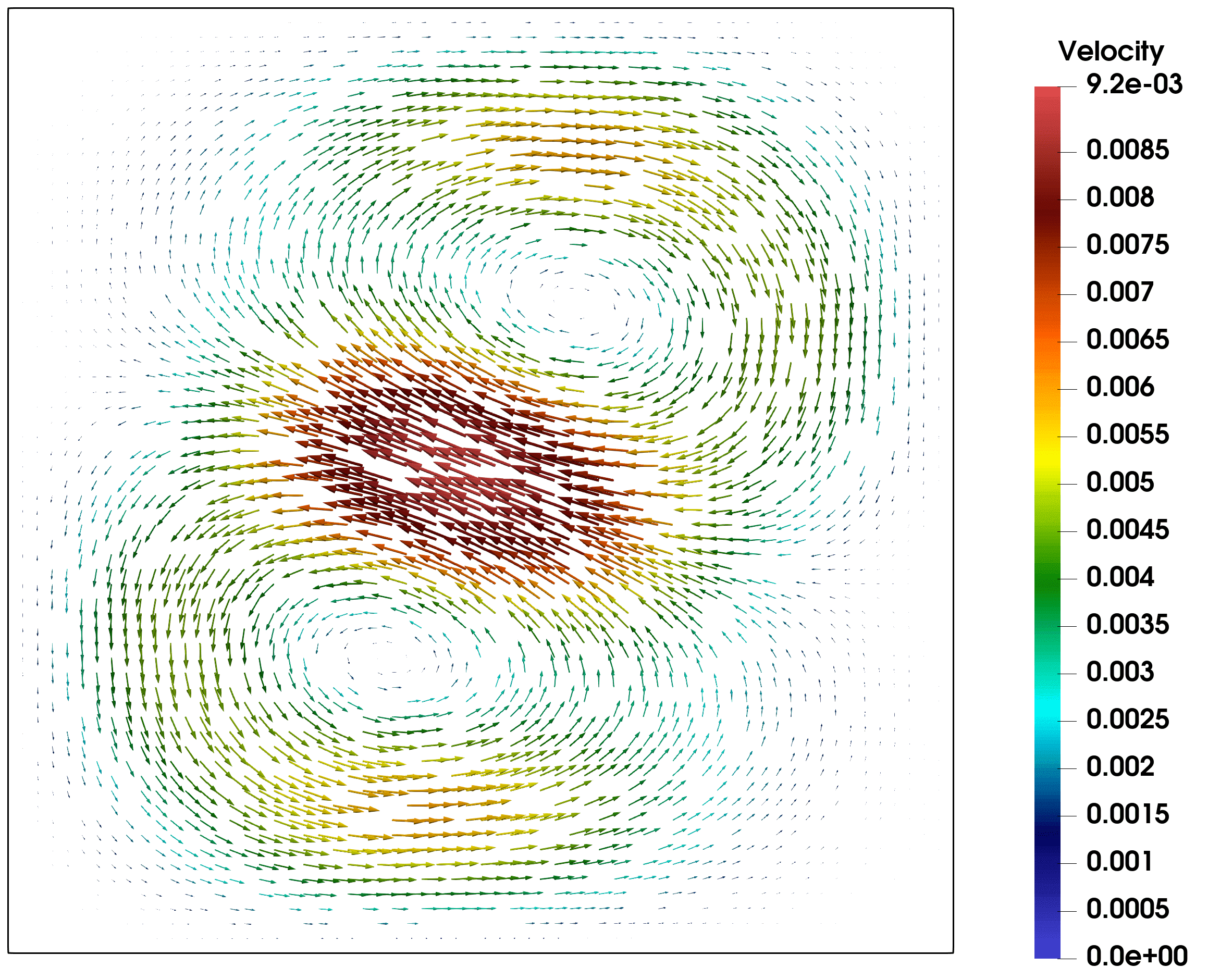}\\
                    \end{minipage}
		\caption{ ``Snapshots'' of pressure and velocity fields corresponding to the  third  lowest eigenvalue with $\CT_{h}^{1}$ and $N=64$.}
		\label{FIG:clampedPlate}
	\end{center}
\end{figure}

\subsection{Circular Domain} For this example we consider as computational domain the unit circular plate, $\overline{\Omega}:=\{(x,y)\in \mathbb{R}^2: x^2+y^2\leq 1 \}$. 
 An example of the meshes that we consider for this test is $ \CT_h^2$ presented in Figure~\ref{FIG:meshes}. 

The purpose of this example is to observe the performance of the proposed method in a context that goes beyond of the developed theory, where a curved domain is considered.
Further, we discretize the domain with voronoi mesh and the curve boundary is approximated by straight-lines. To avoid the variational crime due to approximation of the curve boundary by straightline, we have considered finer mesh for the computation of spectrum. Additionally, we highlight that we have employed clamped boundary condition for the test. The computed spectrum is reported in Table~\ref{TABLA:clamped}. Let us remark that our computed eigenvalues are similar with those obtained in \cite{M1402959}, and convergence order of the approximated spectrum is quadratic as is theoretically expected (Theorem~\ref{cotadoblepandeo}). 

\begin{table}[H]
\begin{center}
\caption{Test 2: Computed lowest eigenvalues $\l_{h}^{(i)}$, $1\le
i\le4$ of the unit circular plate on $\cT_h^2$ .}
\begin{tabular}{|c||c||ccc||c||c||c|c|c}
\hline
$\CT_h$   & $\l_{h}^{(i)}$  & $N=16$ & $N=32$ & $N=64$ &
Order & Extr.&\cite{MR2473688} \\
\hline
\hline
& $\l_{h}^{(1)}$  & 14.9103  & 14.7353  & 14.6947 &   2.11  & 14.6825&14.6834\\
& $\l_{h}^{(2)}$   & 26.9907 &  26.5212 &  26.4096 &   2.07 &  26.3746&26.3784\\
$\CT_h^2$ & $\l_{h}^{(3)}$   & 27.0417  & 26.5302 &  26.4127  &  2.12  & 26.3775&26.3786\\
& $\l_{h}^{(4)}$   & 42.0813  & 41.0416 &  40.7888   & 2.04  & 40.7076&40.7143\\
& $\l_{h}^{(5)}$   & 42.20075  & 41.0648 &  40.7945   & 2.07  & 40.7099&40.7160\\
\hline
 \end{tabular}
\label{TABLA:clamped}
\end{center}
\end{table}

\subsection{Effects of the stabilization in the computed spectrum}

Apart from the previous discussion, here, we focus to analyze the influence of the stabilizer, i.e., $S^K(\cdot,\cdot)$ which is defined in Section~\ref{subsec:disc_b_f} on discrete eigenvalues and optimal convergence rate associated with the mesh refinement. Further, it is well known that in eigenvalue problems, when numerical methods depend on some
stabilization parameter, spurious eigenvalues, and suboptimal order of convergence of the eigenvalues may appear and it is important to 
find a threshold in which the stabilization parameter does not introduce this pollution.
In this direction, we consider the Stokes eigenvalue problem with mixed boundary conditions as follows:
 Find $\lambda\in\mathbb{R}$, the velocity $\bu$, and the pressure $p$ such that
\begin{equation*}\label{def:stokes_eigen_mixed_boundary}
\left\{
\begin{array}{rcll}
-\nu\Delta\bu +\nabla p & = & \lambda\bu&  \text{ in } \quad \Omega, \\
\div\bu & = & 0 & \text{ in } \quad \Omega, \\
\bu & = & \mathbf{0} & \text{ on } \quad \Gamma_D,\\
(\nabla\bu-p\mathbb{I})\boldsymbol{n} & = & \mathbf{0} & \text{ on } \quad \Gamma_N,
\end{array}
\right.
\end{equation*}
where the boundary $\partial\O$  admits a disjoint partition $\partial\O:=\Gamma_D\cup\Gamma_N$, where both, the Dirichlet and Neumann parts of the boundary, have positive measure,  $\boldsymbol{n}$ is the outward normal vector to $\partial\O$, and $\mathbb{I}\in\mathbb{R}^{2\times 2}$ is the identity matrix.  For this test, we will consider $\O:=(0,1)^2$ as computational domain, whereas the bottom fo the square will be clamped (i.e. $\bu=\boldsymbol{0}$ on $\Gamma_D$) and the rest of the sides of the square will be free, imposing the Neumann boundary condition on them.
 
On the other hand, to perform this test we redefine the stabilizer as $\alpha S^K(\cdot,\cdot) $, where $\alpha >0$. In Table~\ref{TABLA:mixesp}, we posted the lowest eigenvalues computed by our scheme for different choice of $\alpha$ with square mesh ($\CT_h^1$) and voronoi mesh ($\CT_h^2$), and refinement level $N=11$. Let us remark that this choice of $N$ is to observe correctly the eigenfunctions in order to determine if we are in presence of spurious or physical eigenvalues.  The numbers inside boxes represent the localized spurious eigenvalues on the computed spectrum.
\begin{table}[H]
\begin{center}
\caption{Test~3: Computed lowest eigenvalues for different values of $\alpha$  with $N=11$ and $\CT_{h}^{1}$,$\CT_{h}^{2}$, and $\CT_{h}^{5}$.}
\begin{tabular}{|l|l|l|l|l|l|l|l|l|l|}
\hline
\multicolumn{5}{ |c| }{$\CT_{h}^{1}$} \\
\hline
\hline
  $\alpha=\frac{1}{10}$ & $\alpha=\frac{1}{5}$ &  $\alpha=1$ & $\alpha=5$ & $\alpha=10$ \\
\hline
2.4716  &  2.4716  &  2.4716   & 2.4716  &  2.4716\\
    3.3520 &   5.3533  &  6.0680  &  6.3638 &   6.8070\\
    5.0031 &  14.1244 &  15.0430&   15.6086 &  17.0387\\
   \fbox{5.3806} &  16.0821 &  22.5483&   22.5483 &  22.5483\\
   \fbox{5.7303}&   \fbox{20.1632} &  25.2649 &  27.7358 &  31.4481\\
    5.7810&   \fbox{20.8073} &  41.7462 &  44.9704&   48.4881\\
    5.9995 &  \fbox{21.1000}&   43.6492&   50.9110 &  64.3476\\
    \fbox{6.1079} &  22.5483 &  55.9620 &  64.3476 &  65.3221\\
    \fbox{6.2193} &  22.6441 &  64.3476 &  68.8324 &  84.0526\\
    \fbox{6.4613} &  \fbox{22.7580}  & 66.8821&   81.8611 & 106.0351\\
\hline
\hline
\multicolumn{5}{ |c| }{$\CT_{h}^{2}$} \\
\hline
\hline
  $\alpha=\frac{1}{10}$ & $\alpha=\frac{1}{5}$ &  $\alpha=1$ & $\alpha=5$ & $\alpha=10$ \\
\hline
 2.4375&    2.4628&    2.4687 &   2.4734 &   2.4862\\
    3.8019&    5.6057&    6.1282&    6.3671 &   6.7384\\
    \fbox{5.2752}  & 13.8688&   15.0188 &  15.5603&   16.6973\\
    \fbox{5.4793} &  18.5939  & 22.2737 &  22.6543&   23.6218\\
    \fbox{5.5345} &  20.2889&   25.8953 &  27.6940 &  30.6311\\
    \fbox{5.6137} &  \fbox{20.8660}  & 42.9686 &  45.0023 &  49.5569\\
    \fbox{5.6353}   &\fbox{20.9625} &  43.4884&   50.5334 &  60.8378\\
    \fbox{5.6817} &  \fbox{21.5805} &  59.3004 &  64.9147&   72.6551\\
    \fbox{5.7266}   &\fbox{22.0167} &  61.1749 &  68.7617 &  83.6547\\
    \fbox{5.7487} &  \fbox{22.1313} &  67.8385 &  80.2792 & 101.6238\\
    \hline
    \hline
\multicolumn{5}{ |c| }{$\CT_{h}^{5}$} \\
\hline
\hline
  $\alpha=\frac{1}{10}$ & $\alpha=\frac{1}{5}$ &  $\alpha=1$ & $\alpha=5$ & $\alpha=10$ \\
\hline
\hline
2.4695   & 2.4708   & 2.4714    &2.4718 &   2.4729\\
    3.5024&    5.4102 &   6.0747 &   6.3770  &  6.8564\\
   \fbox{ 5.3396}&   14.1771&   15.0476 &  15.6272&   17.1093\\
    \fbox{5.6891}&   \fbox{16.6210} &  22.5287 &  22.5693&   22.6544\\
    \fbox{5.7402}&   \fbox{20.7998} &  25.4045 &  27.7871&   31.5454\\
    \fbox{5.9391}&   \fbox{21.5002}&   42.1502 &  45.1100 &  48.8237\\
    \fbox{6.1818}&   \fbox{21.8507} &  43.6642 &  50.9626 &  64.7043\\
    \fbox{6.2279} &  \fbox{22.3891} &  56.3702 &  64.4149 &  65.6116\\
    \fbox{6.4287} &  \fbox{23.1814} &  64.1430 &  69.0110  & 84.9012\\
    \fbox{6.5270} &  \fbox{23.2058} &  67.3464 &  81.9551 & 108.0546\\
\hline
\end{tabular}
\label{TABLA:mixesp}
\end{center}
\end{table}

From Table \ref{TABLA:mixesp} we observe the presence of spurious eigenvalues for $\alpha= 1/16$ and $\alpha=1/4$ for the considered meshes, whereas for $\alpha\geq 1$ we observe that the lowest computed eigenvalues are correct approximations of the physical ones, letting us to infer that a safe threshold  of $\alpha$ for the approximation of the spectrum is from one and forward. It is important that this threshold has been identify for this configuration of the problem (namely, the geometrical domain, boundary conditions, physical paraneters) and may change on other configuration.

Eventually, we investigate the convergence order of the spectrum for different choice of parameters $\alpha$, particularly for  $\alpha \in \{4^{-2}, 4^{-1}, 1, 4, 4^2\} $. We observed from Table~\ref{TABLA:mix:oc} that when the parameter $\alpha<1$, the order cf convergence is affected and is not optimal. On the other hand, for $\alpha\geq 1$ the order fo convergence is improved and the optimal order is attained. This is expectable since we have already see that the safe threshold to recover the physical spectrum is precisely for $\alpha\geq 1$.

\begin{table}[H]
\begin{center}
\caption{Test~3: Computed lowest eigenvalues $\l_{h}^{(i)}$, $1\le
i\le5$ for different choice of $\alpha$ on square domain with mixed boundary condition, and $\CT_h^1$.}
\begin{tabular}{|c||c||ccc||c||c||c|c|c }
\hline
$\alpha$   & $\l_{h}^{(i)}$  & $N=32$ & $N=64$ & $N=128$ &
Order & Extr.&\cite{LEPE2021113753} \\
\hline
\hline
& $\l_{h}^{(1)}$  & 2.4956  & 2.4809  & 2.4740 &   1.09  & 2.4678 &2.4674\\
& $\l_{h}^{(2)}$   & 6.1066 &  6.1922 &  6.2353 &   0.99 &  6.2790&6.2791\\
$\frac{1}{16}$ & $\l_{h}^{(3)}$   & 15.0555  & 15.1287 &  15.1679  & 0.90  & 15.2132&15.2096\\
& $\l_{h}^{(4)}$   & 22.4876  & 22.3358 &  22.2680   & 1.16  & 22.2136 &22.2064\\
& $\l_{h}^{(5)}$   & 26.3993  & 26.7144 &  26.8400   & 1.33  & 26.9227 &26.9525\\
\hline
\hline
& $\l_{h}^{(1)}$ &  2.4686   &2.4679 &  2.4677   & 1.81  & 2.4676&2.4674\\
& $\l_{h}^{(2)}$  &  6.3145  & 6.2877 &  6.2795   & 1.71  & 6.2758 &6.2791\\
$\frac{1}{4}$& $\l_{h}^{(3)}$     &15.3759  & 15.2522   &15.2129   & 1.66  & 15.1948 &15.2096\\
& $\l_{h}^{(4)}$    & 22.2470   &22.2136 &  22.1984   & 1.15 & 22.1861 &22.2064\\
& $\l_{h}^{(5)}$    & 27.2197   &27.0025 &  26.9401   & 1.80 & 26.9149 &26.9525\\
  \hline
  \hline
& $\l_{h}^{(1)}$ &  2.4692   &2.4678  & 2.4675 &   2.22  & 2.4674 &2.4674\\
& $\l_{h}^{(2)}$  &  6.4223  & 6.3147&   6.2891 &   2.07 &  6.2810 &6.2791\\
$1$& $\l_{h}^{(3)}$     & 15.8558 &  15.3767 &  15.2532 &   1.96  & 15.2107&15.2096\\
& $\l_{h}^{(4)}$   & 22.3720  & 22.2479 &  22.2165  &  1.98  & 22.2057&22.2064\\
& $\l_{h}^{(5)}$   & 28.0639  & 27.2199 &  27.0087  &  2.00  & 26.9384 &26.9525\\
\hline
\hline
& $\l_{h}^{(1)}$  & 2.4691   &2.4679 &  2.4676   & 2.00 & 2.4675 &2.4674\\
& $\l_{h}^{(2)}$   &  6.3144  & 6.2882 & 6.2812  &  1.90 &  6.2786&6.2791\\
$4$ & $\l_{h}^{(3)}$  &  15.3762  & 15.2528 &  15.2212  &  1.97  & 15.2105 &15.2096\\
 & $\l_{h}^{(4)}$  &  22.2476  & 22.2162 &  22.2089  &  2.11  & 22.2067 &22.2064\\
 & $\l_{h}^{(5)}$  &  27.2201  & 27.0148 &  26.9591  &  1.89  & 26.9387 &26.9525\\
\hline
\hline
& $\l_{h}^{(1)}$  & 2.4693   &2.4679 &  2.4676   & 2.22 & 2.4675 &2.4674\\
& $\l_{h}^{(2)}$   &  6.3146  & 6.2891 & 6.2821  &  1.87 &  6.2795&6.2791\\
$16$ & $\l_{h}^{(3)}$  &  15.3765  & 15.2532 &  15.2219  &  1.98  & 15.2113 &15.2096\\
 & $\l_{h}^{(4)}$  &  22.2483  & 22.2167 &  22.2100  &  2.24  & 22.2082 &22.2064\\
 & $\l_{h}^{(5)}$  &  27.2210  & 27.0148 &  26.9600  &  1.91  & 26.9401 &26.9525\\
\hline
 \end{tabular}
\label{TABLA:mix:oc}
\end{center}
\end{table}


\subsection{A posteriori test}
We end this section by reporting a numerical test that allows us to  assess the performance  of the a posteriori estimator 
defined in \eqref{eq:global_eta}. With this aim, we have implemented in a MATLAB code a lowest order VEM scheme
on arbitrary polygonal meshes. In order to apply the adaptive virtual  element method, we shall generate a sequence of nested conforming meshes using the loop
\begin{center}
	\Large \textrm{solve $\rightarrow$ estimate $\rightarrow$ mark $\rightarrow$ refine.} 
\end{center}
This  mesh refinement algorithm described in \cite{MR3342219},  consists in the splitting of each element of the mesh into $n$ quadrilaterals ($n$ being the number of edges of the polygon) by connecting the barycenter of the element with the midpoint of each edge, which will be named as  \textbf{Adaptive VEM}. Notice that although this process is initiated with a mesh of triangles, the successively created meshes will contain other kind of convex polygons, as it can be seen from Figure \ref{FIG:refined}. Both schemes are based on the strategy of refining those elements $\E\in\CT_h$ that satisfy
 $$ \eta_{\E}\geq 0.5 \max_{{\E'\in\CT_{h}}}\{\eta_{\E'}\}.$$

In Figure \ref{FIG:strategy} we present an example of the VEM strategy for the refinement of certain polygons.

 \begin{figure}[H]
	\begin{center}
		\begin{minipage}{13cm}
                          \centering
                          \hspace{.5 cm}
                           \centering
 \includegraphics[height=3.7cm, width=3.7cm,angle=180]{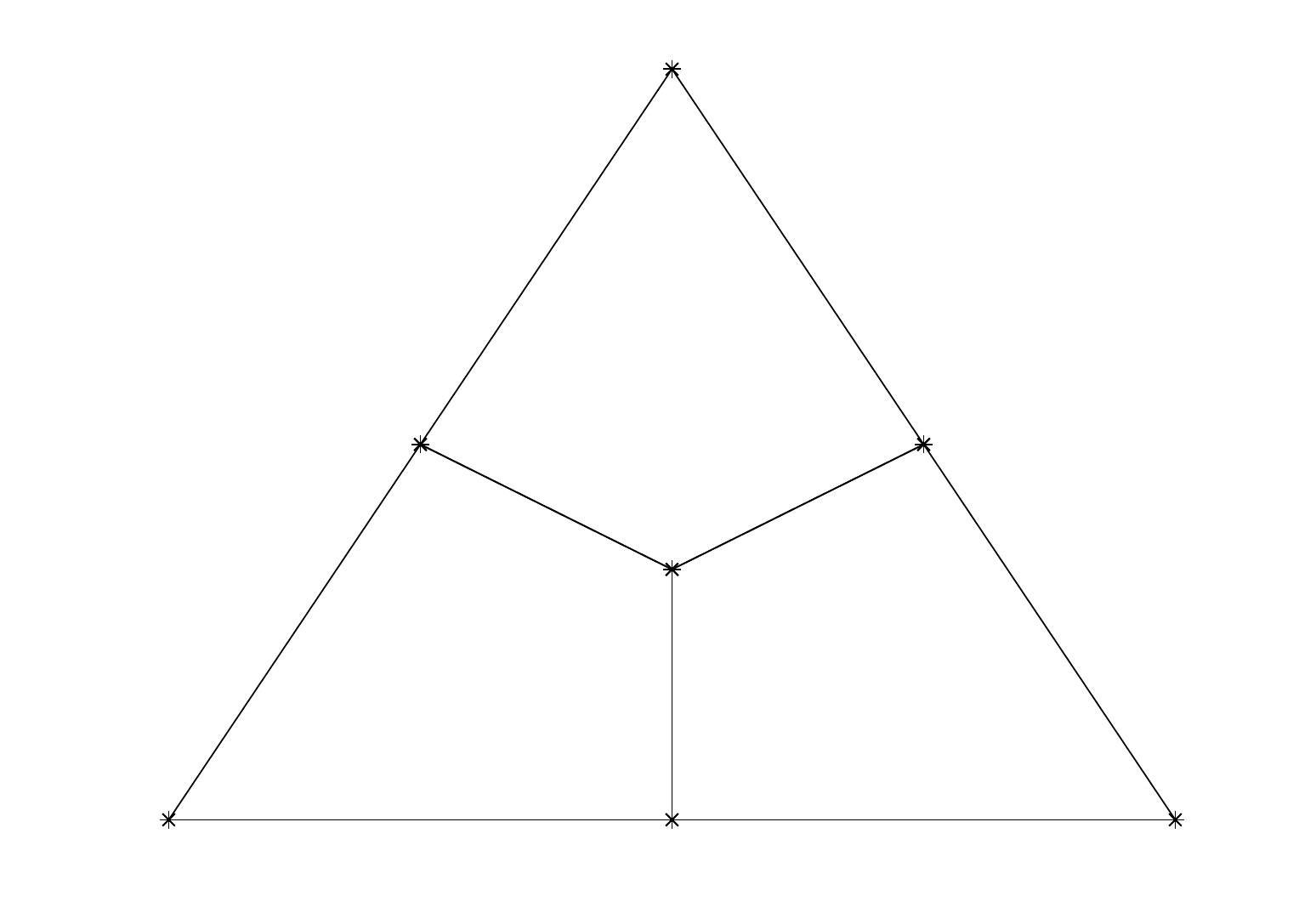}
                           \hspace{.5 cm}
                   \end{minipage}
		\caption{Example of refined elements for the VEM strategy.}
		\label{FIG:strategy}
	\end{center}
\end{figure}
 \subsection{L-shaped Domain} 
The computational domain for this test is $\O=(-1,1)\times (-1,1)\setminus (-1,0)\times (-1,0)$
 and the only boundary condition is $\bu = \0$. For the L-shaped domain, the re-entrant angle leads to a lack of regularity for some eigenfunctions, as has been studied in \cite{MR2473688}, the convergence rates of the errors for the eigenvalues, vary between $1.7 \leq r \leq2$, depending on the regularity of the corresponding eigenfunction. Our goal is to recover the optimal convergence order for the lowest eigenvalue with the proposed estimator. We note that since the true eigenvalue is not known, we have chosen the extrapolated value $\l_1 = 32.13183$ as the exact solution, which is in line with what is presented in the literature (see \cite{M1402959,LEPE2023114798,MR2473688}). 
 Figure\ref{FIG:refined} reports the adaptively refined meshes obtained with  VEM procedure for different  initial meshes.
 \begin{figure}[H]
	\begin{center}
		\begin{minipage}{15cm}
			\centering\includegraphics[height=4.7cm, width=4.7cm]{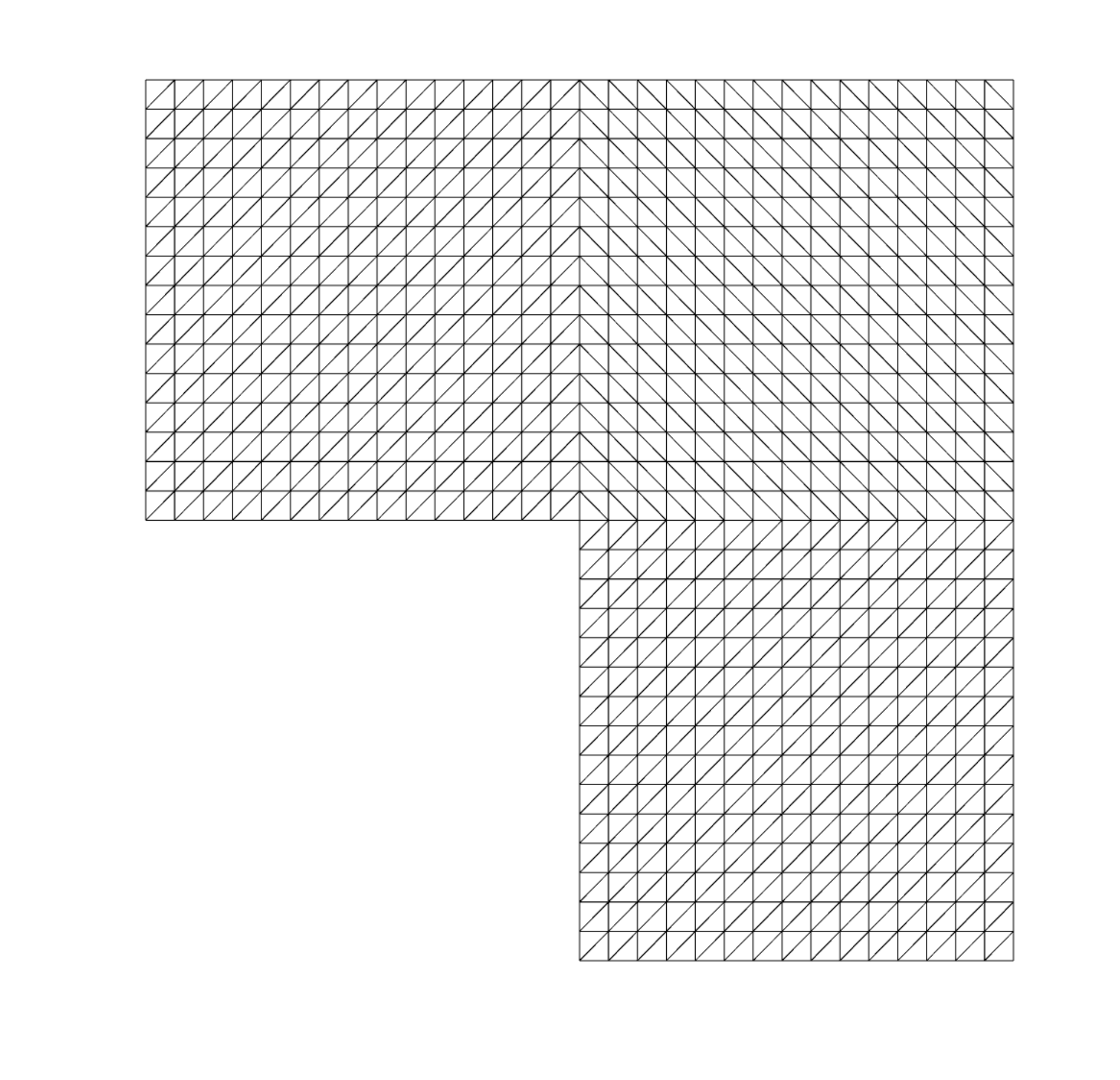}
			\centering\includegraphics[height=4.7cm, width=4.7cm]{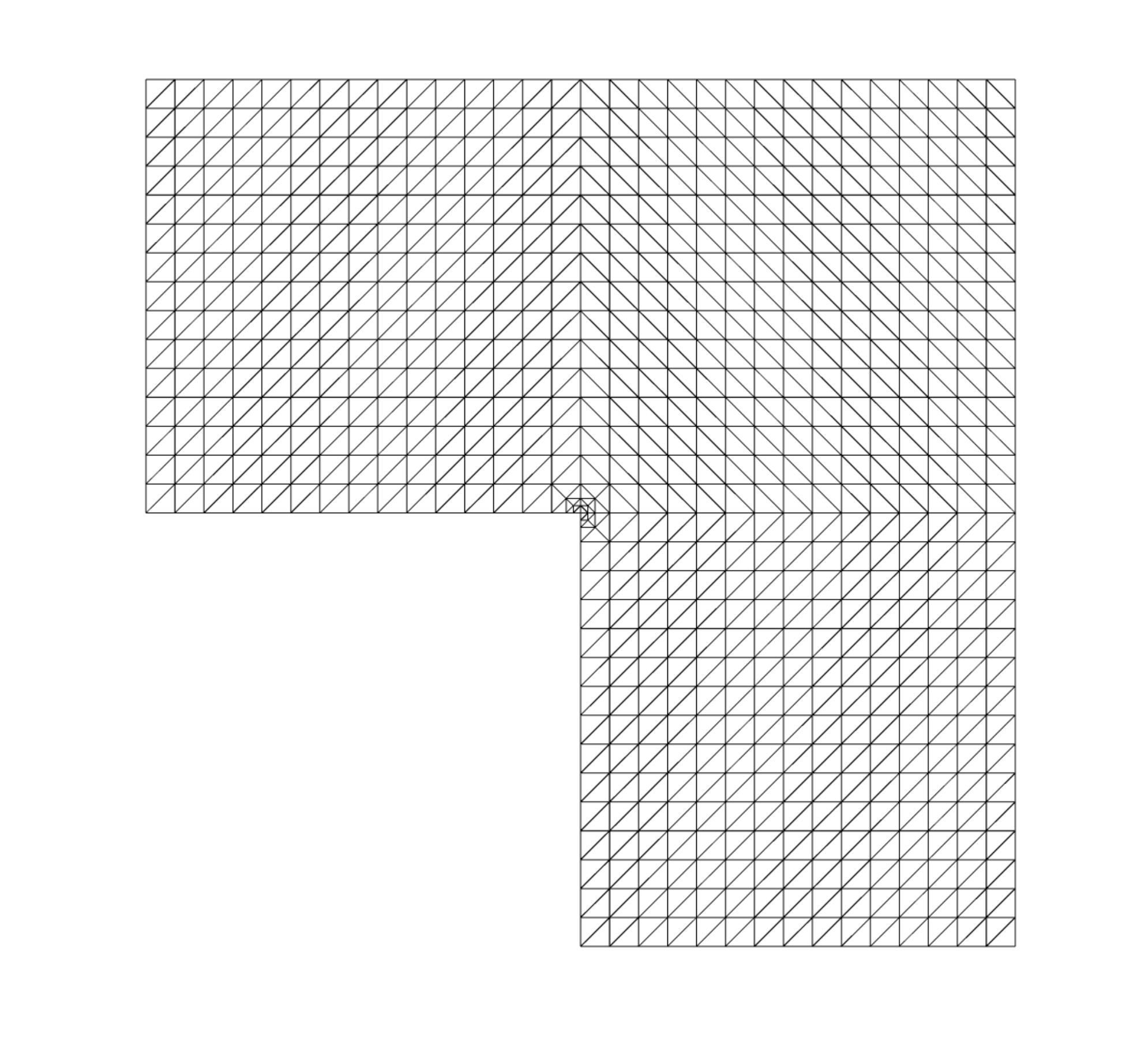}
                         \centering\includegraphics[height=4.7cm, width=4.7cm]{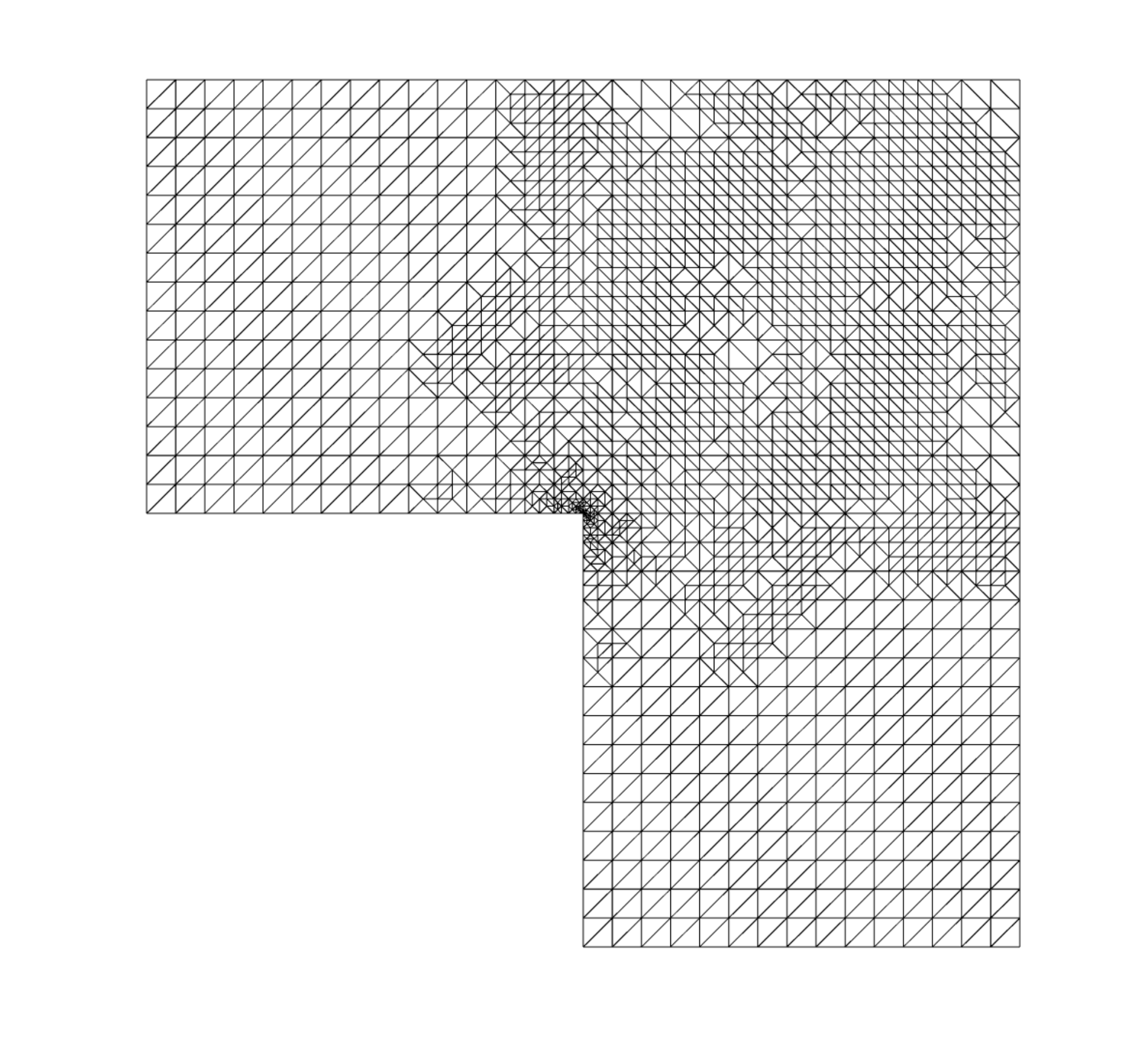}\\
                         \centering\includegraphics[height=4.7cm, width=4.7cm]{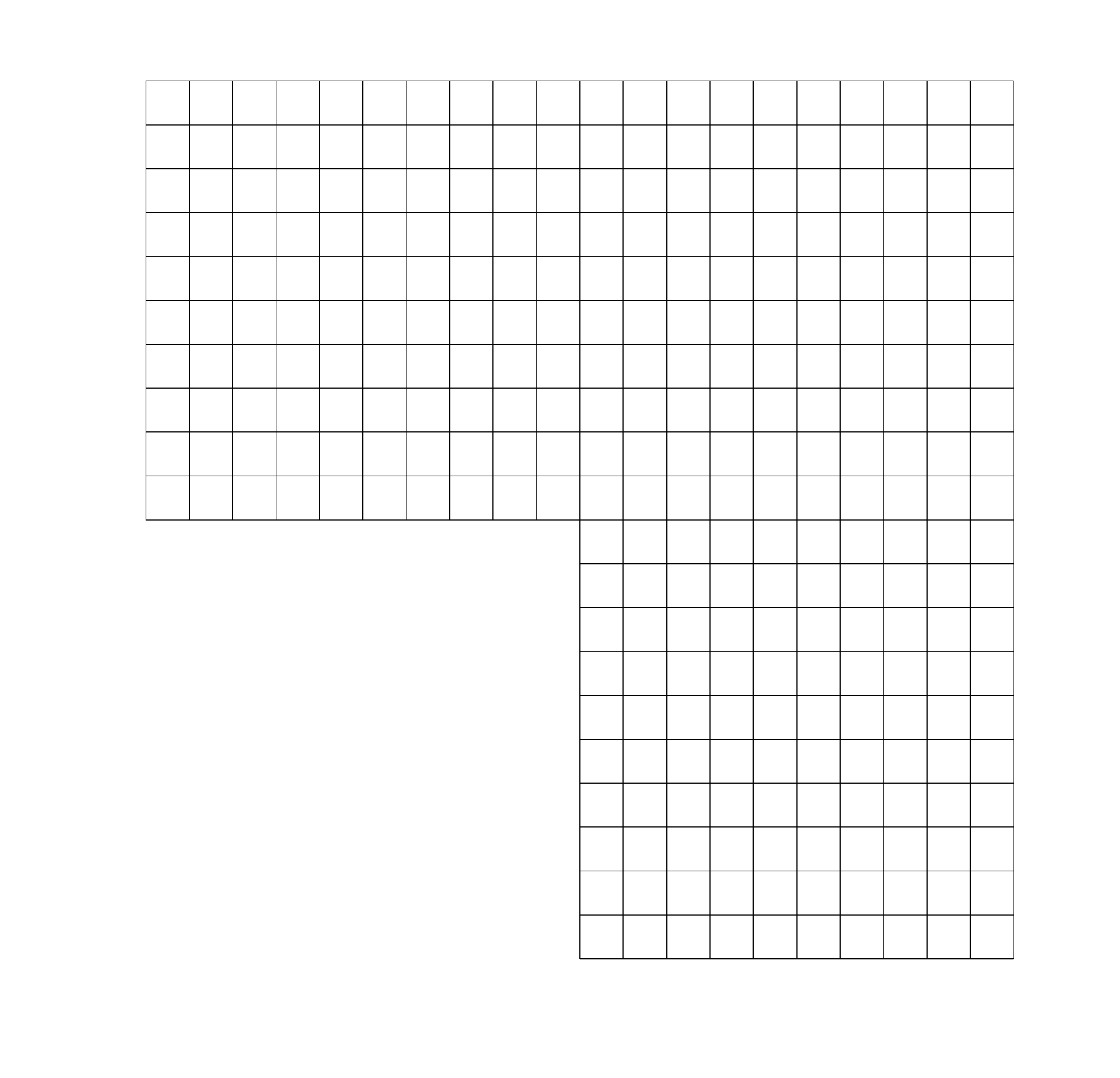}
                          \centering\includegraphics[height=4.7cm, width=4.7cm]{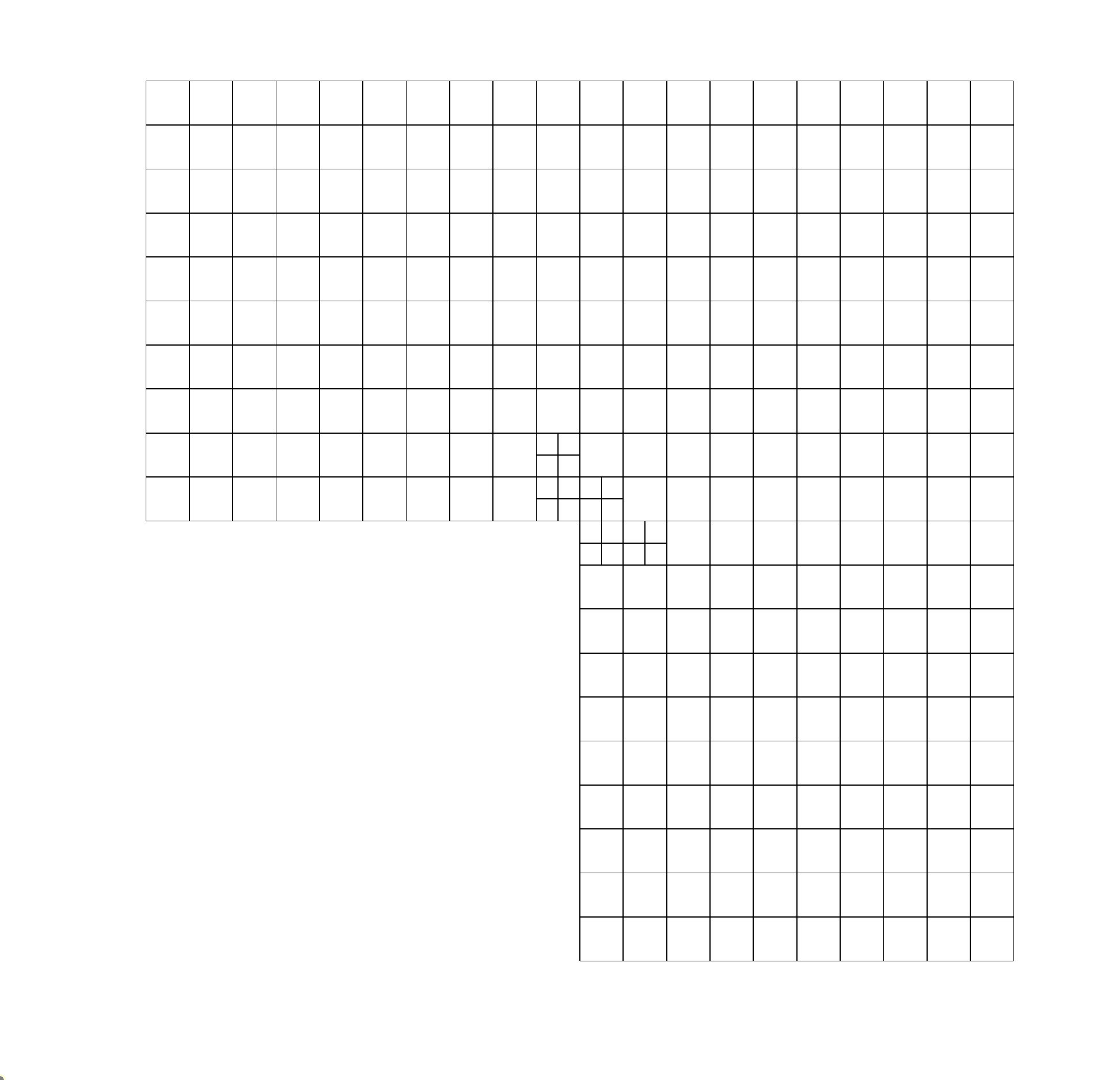}
                          \centering\includegraphics[height=4.7cm, width=4.7cm]{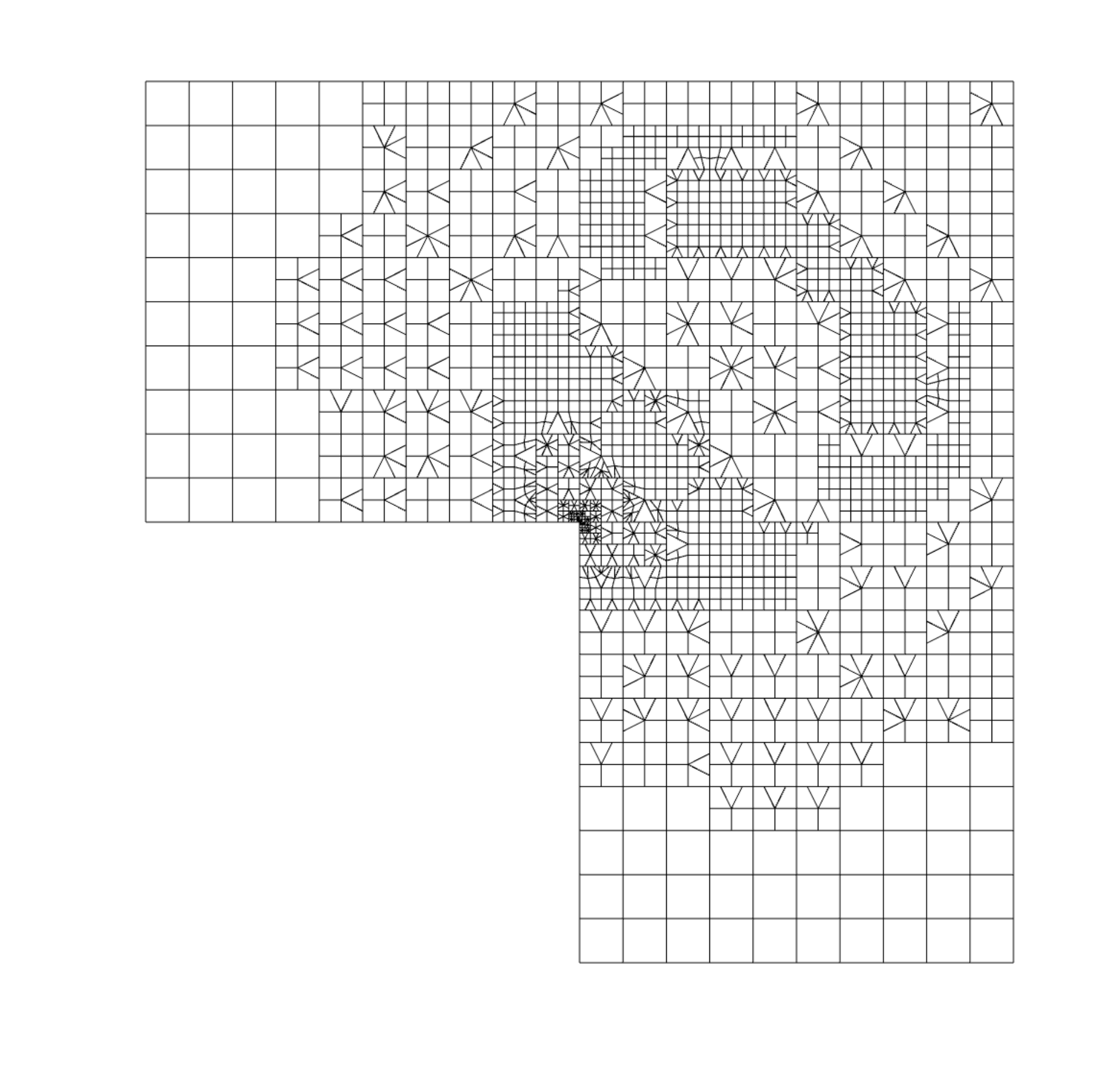}\\
                            \centering\includegraphics[height=4.7cm, width=4.7cm]{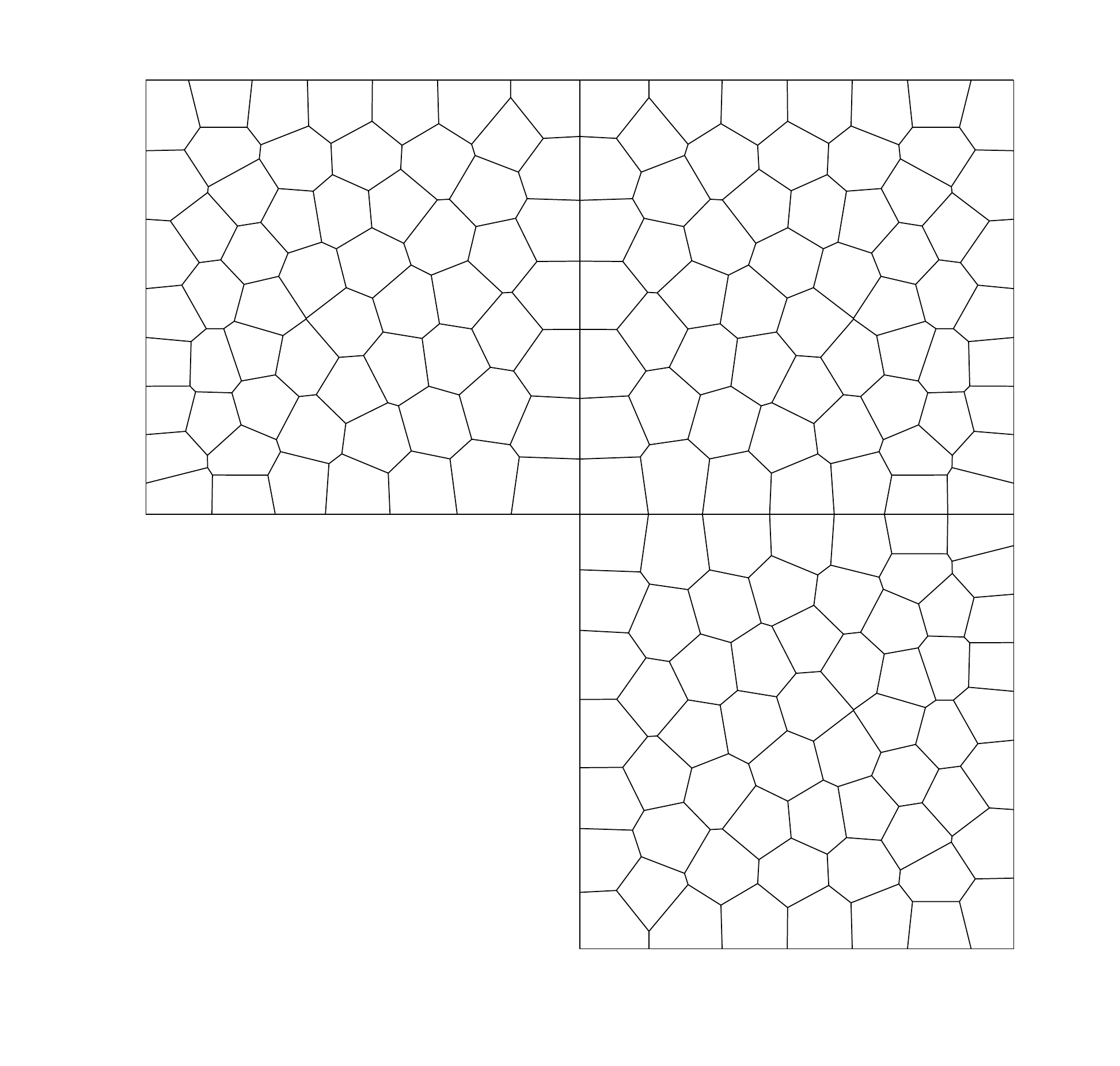}
                          \centering\includegraphics[height=4.7cm, width=4.7cm]{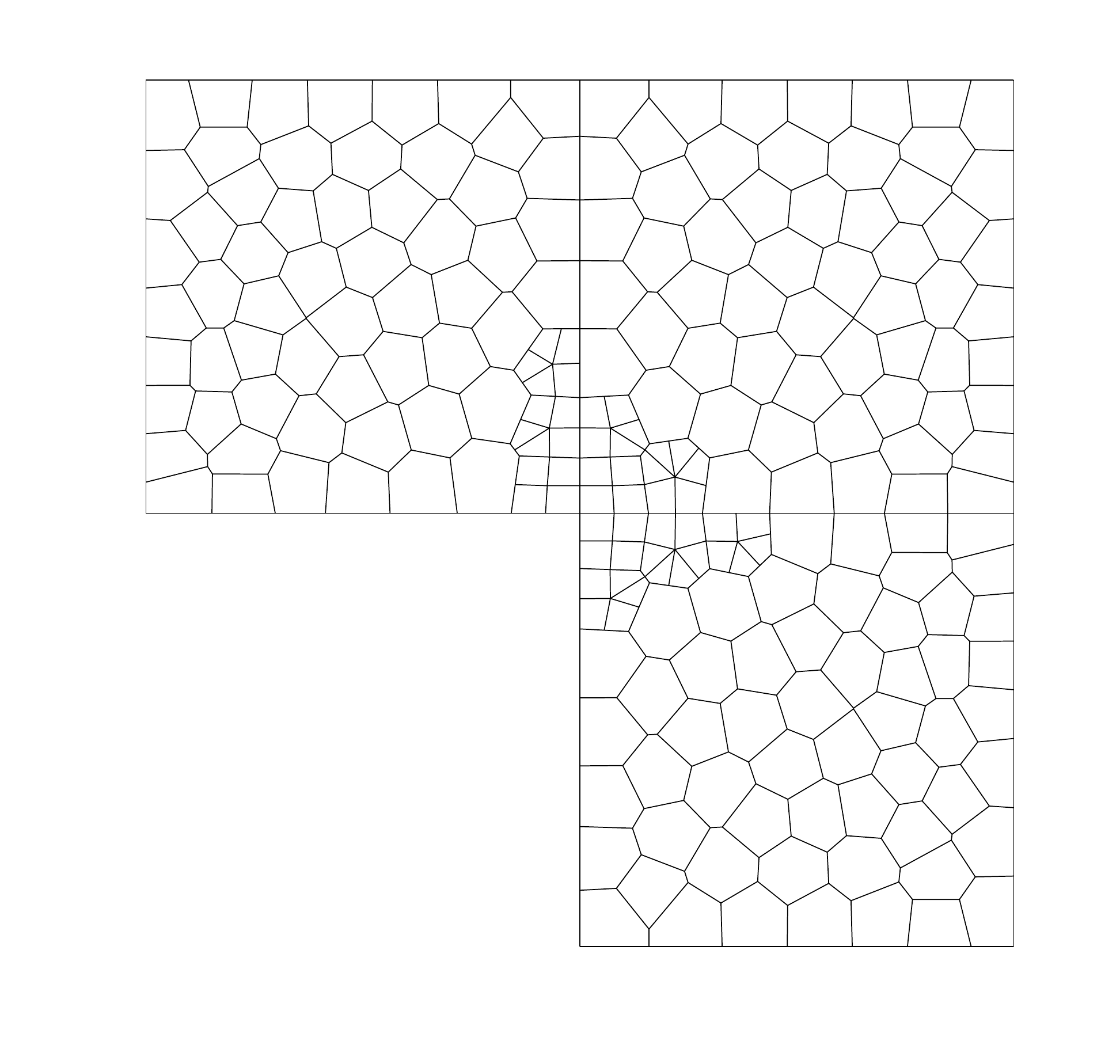}
                          \centering\includegraphics[height=4.7cm, width=4.7cm]{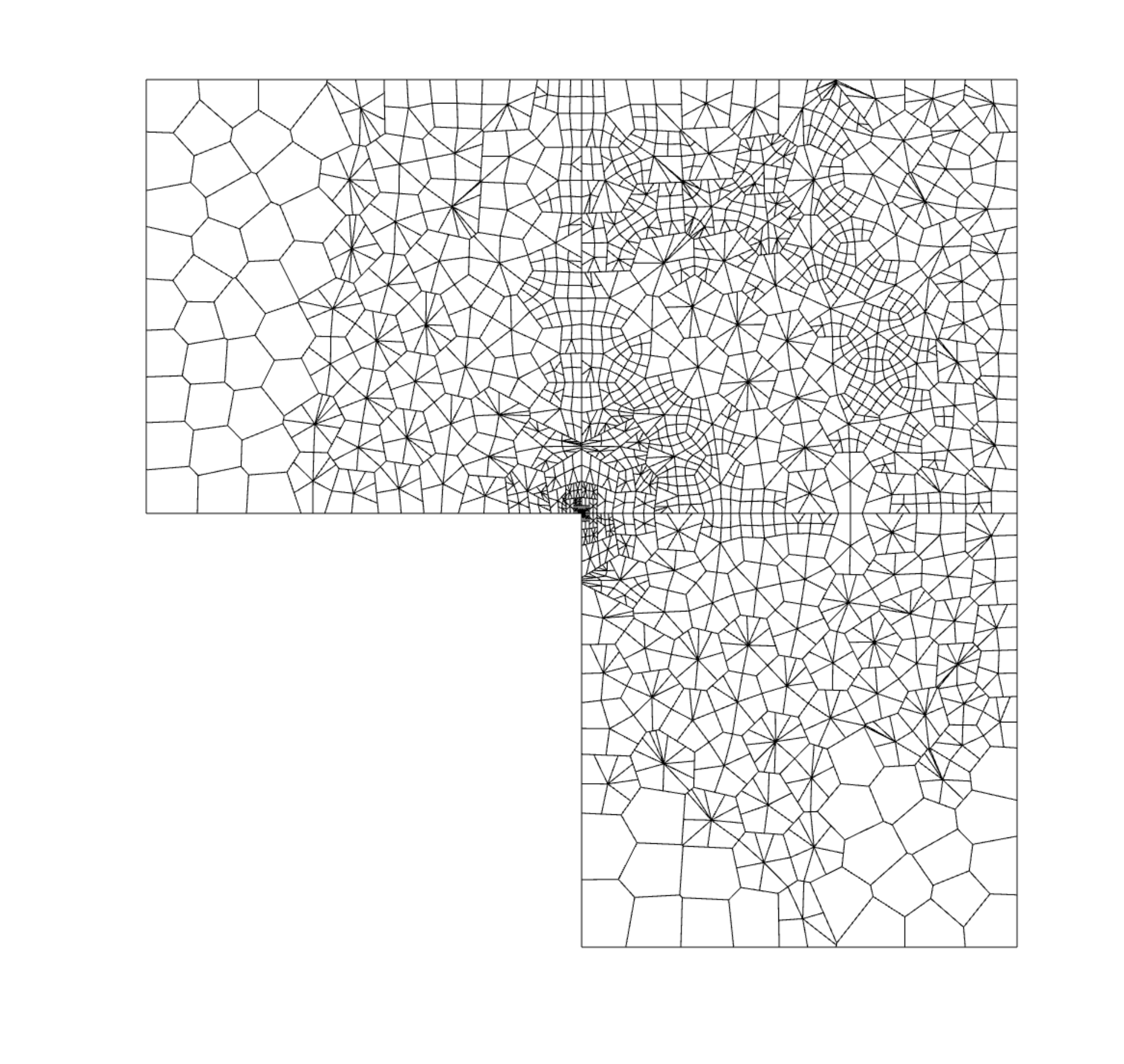}
                   \end{minipage}
		\caption{Adaptively refined meshes obtained with the VEM scheme at refinement steps 0, 1 and 10.}
		\label{FIG:refined}
	\end{center}
\end{figure}
We report in Table \ref{TABLA:1} the lowest eigenvalue $\l_{h}^{(1)}$ on uniformly refined meshes, adaptively refined meshes with VEM schemes.
 \begin{table}[H]
\begin{center}
\caption{Test 4. Lowest eigenvalue   $\l_{h}$ computed with different schemes.}
\begin{tabular}{|c|c||c|c||c|c||c|c||c|c}
  \hline
    \multicolumn{2}{|c||}{Uniform VEM} & \multicolumn{2}{c||}{Adaptive VEM triangles}& \multicolumn{2}{c||}{Adaptive VEM squares}& \multicolumn{2}{c||}{Adaptive VEM voronoi}\\
    \hline
     $N$ & $\l_{h}^{(1)}$  &   $N$ & $\l_{h}^{(1)}$  &   $N$ & $\l_{h}^{(1)}$&   $N$ & $\l_{h}^{(1)}$  \\
\hline
1622   & 32.7016 &3557   & 32.4306   &1322 &   30.6989&1637 &  30.6918 \\
     6242   & 32.3157 &3590&    32.4377 & 1400   & 31.0615&  1877&   30.9977\\
    24482  &  32.1893 & 3623  &  32.4325 & 2166   & 31.5866&  3597  & 31.7255\\
    96962 &   32.1514 & 3786   & 32.4235& 3326   & 31.6915  &  4583  & 31.9341\\
& & 3949   & 32.4122   & 4644   & 31.9244&  5783  & 32.0543\\
&  & 5658   & 32.2994    & 5900   & 32.0087&   6992  & 32.1015 \\
   &   & 8208   & 32.2271     & 9998   & 32.0627&   10928  & 32.1013\\
 &   & 9699   & 32.2184 & 15490  &  32.0930&   17022  & 32.1182\\
  & & 13549  &  32.1986 & 21032  &  32.1087&   23567  & 32.1257\\
  & & 20204  &  32.1728& 29484  &  32.1175&   33731  & 32.1284\\
 &  & 32487  &  32.1582& 47610  &  32.1246&  51612  & 32.1302\\
 &  & 42232  &  32.1528& & & &\\
     \hline 
     Order   &$\mathcal{O}\left(N^{-0.83}\right)$&       Order   & $\mathcal{O}\left(N^{-1.11}\right)$&       Order   & $\mathcal{O}\left(N^{-1.44}\right)$&       Order   & $\mathcal{O}\left(N^{-1.96}\right)$ \\
       \hline
       $\l_1$  &32.1321&  $\l_{1}$  &32.1321 &  $\l_{1}$  &32.1321&  $\l_{1}$  &32.1321\\
     \hline
    \end{tabular}
\label{TABLA:1}
\end{center}
\end{table}  
\noindent We observe from  Figure \ref{FIG:errorL} that the refinement schemes lead to the correct convergence rate. Moreover, the performance of the adaptive VEM is slightly better than that of the adaptive with triangles, as it admits dangling nodes, which implies that it refines fewer elements and as the number of elements is part of the degrees of freedom of the scheme, this adaptor achieves better convergence.
\begin{figure}[H]
	\begin{center}
		\begin{minipage}{13cm}
			\centering\includegraphics[height=8.5cm, width=10cm]{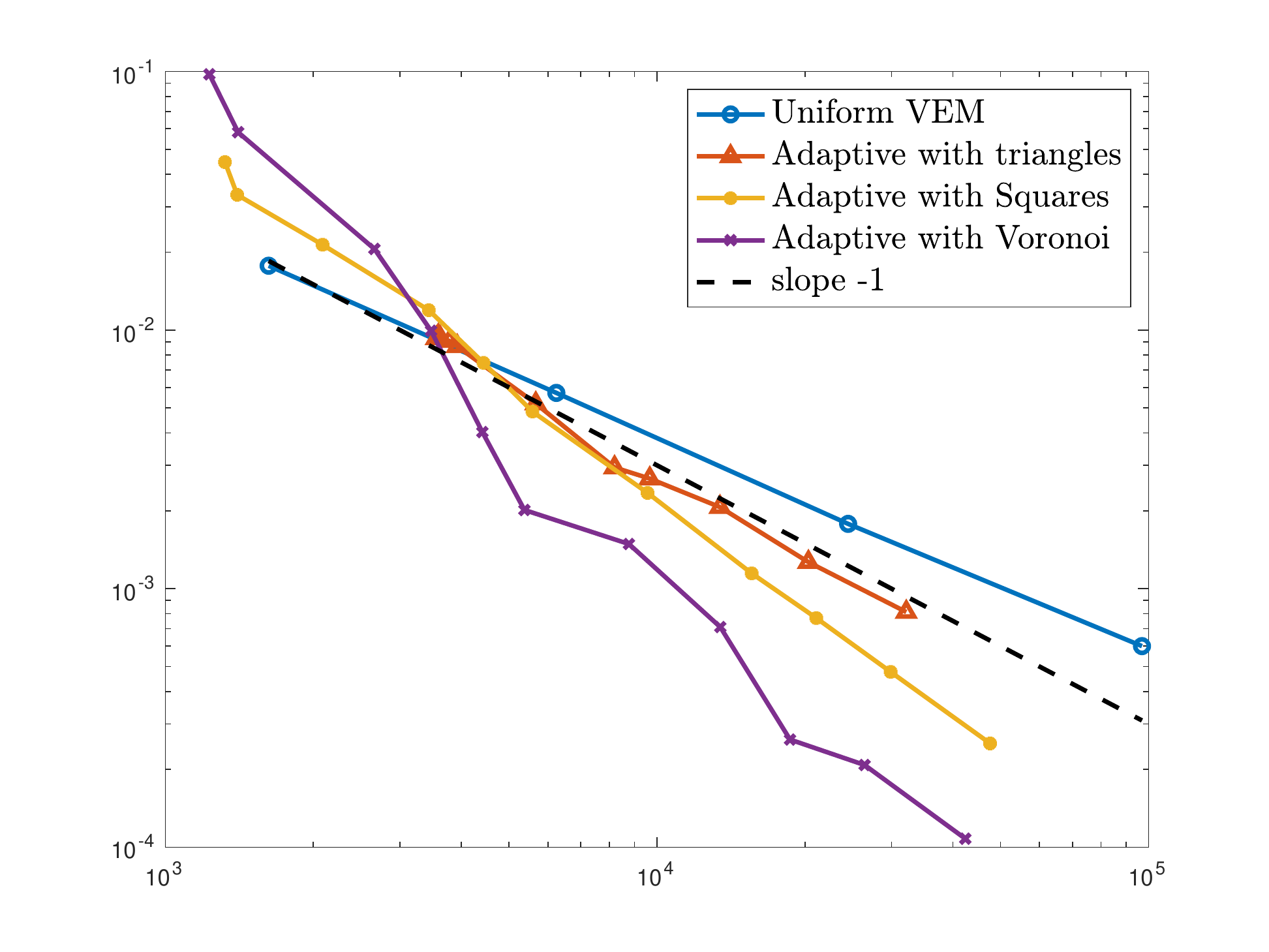}
                   \end{minipage}
		\caption{Comparison between error.}
		\label{FIG:errorL}
	\end{center}
\end{figure}
\noindent We report in Table \ref{TABLA:3} the error $\texttt{err}(\lambda_1):=|\l_{1}-\l_{h}^{(1)}|/\l_{1}$ and the estimators $\boldsymbol{\eta}^2$ at each step of the adaptive
VEM scheme. We include in the table the  terms  $\displaystyle \boldsymbol{\theta}^{2}:=\sum_{\E\in\CT_{h}}\boldsymbol{\theta}_{\E}^{2}$, $\displaystyle \boldsymbol{R}^{2}:=\sum_{\E\in\CT_{h}}\boldsymbol{R}_{\E}^{2},$
which appears from the inconsistency of the VEM, and 
$\boldsymbol{J}_h:=\displaystyle\sum_{\E\in\CT_{h}}\left(\sum_{\ell\in\mathcal{E}_\E} h_{\E}\|\boldsymbol{J}_{\ell}\|_{0,\ell}^{2}\right),$
which arises from the edge residuals. We also report in the aforementioned table the effectivity indexes $\texttt{eff}(\boldsymbol{\eta}):=\texttt{err}(\lambda_1)/\boldsymbol{\eta}^{2}$.

\begin{table}[H]
\begin{center}
\caption{Test 4: Components of the error estimator and effectivity indexes on the adaptively refined meshes for an initial mesh of squares.}
\vspace{0.3cm}
\begin{tabular}{|c||c||c||c||c||c||c||c||c|}
\hline
$N$   &   $\texttt{err}(\lambda_1)$   & $\boldsymbol{\theta}^{2}$ &$\boldsymbol{R}^{2}$ & $\boldsymbol{J}_h^{2}$ &  $\boldsymbol{\eta}^2$ & $\texttt{eff}(\boldsymbol{\eta})$ \\
\hline
1322  &44.6038e-3   &  4.3986   &  9.6635e-3   &  3.6216  &   8.0298   &  5.5548e-3\\
1400   &  33.3180e-3 &    3.5460&     9.5581e-3 &    3.5874  &   7.1430&     4.6644e-3\\
 2166  & 16.9778e-3  &   1.9308&     6.1570e-3 &    2.5228  &   4.4597  &   3.8069e-3\\
  3326  & 13.7109e-3 &    1.2730 &    2.9580e-3 &    1.4304  &   2.7063  &   5.0663e-3\\
4644      & 6.4637e-3 &  703.2267e-3 &    2.3145e-3  &   1.1400  &   1.8456  &   3.5023e-3\\
5900      & 3.8404e-3&   494.9957e-3 &    2.1043e-3 &  991.0927e-3 &    1.4882 &    2.5806e-3\\
9998      & 2.1593e-3 &  293.7349e-3 &    1.1748e-3 &  597.6668e-3 &  892.5765e-3  &   2.4192e-3\\
15490    & 1.2163e-3&   179.8121e-3 &  695.9132e-6 &  375.3807e-3   &555.8887e-3  &   2.1880e-3\\
21032  & 729.0094e-6 &  124.4353e-3 &  554.9716e-6 &  289.8030e-3 &  414.7933e-3 &    1.7575e-3\\
29484  & 453.2853e-6 &   89.6844e-3  & 415.6462e-6 &  219.0012e-3 &  309.1012e-3  &   1.4665e-3\\
47610  & 232.4108e-6 &   57.4341e-3 &  239.8696e-6 &  135.6897e-3  & 193.3636e-3  &   1.2019e-3\\\hline
  \end{tabular}
\label{TABLA:3}
\end{center}
\end{table}

\noindent It can be seen from the Table \ref{TABLA:3} that the effectivity indexes are bounded above and below far from 0 and the inconsistency and edge residual terms are roughly speaking of the same order, none of them being asymptotically negliglible.

\bibliographystyle{siam}
\footnotesize
\bibliography{bib_LOQ}

\end{document}